\title{Mod-Gaussian convergence for random determinants and random characteristic polynomials}
\author{M. Dal Borgo, E. Hovhannisyan, A. Rouault} 
\theoremstyle{plain}
\newtheorem{thm}{Theorem}[section]
\newtheorem{definition}[thm]{Definition}
\newtheorem{lemma}[thm]{Lemma}
\newtheorem{rmk}[thm]{Remark}
\numberwithin{equation}{section}
\def\Id{\mathrm{Id}}
\def\ff{\textbf f}
\def\half{\frac{1}{2}}
\def\bh{\frac{\beta}{2}}
\def\l{\lambda}
\def\s{\sigma}
\def\t{\theta}
\def\b{\beta}
\def\d{\delta}
\def\g{\gamma}
\def\G{\Gamma}
\def\o{\omega}
\def\ex{\mathrm{e}}
\def\N{{\mathbb N}}
\def\Q{{\mathbb Q}}
\def\R{{\mathbb R}}
\def\C{{\mathbb C}}
\def\U{{\mathbb U}}
\def\P{{\mathbb P}}
\def\E{{\mathbb E}}
\begin{document}
\maketitle

\begin{abstract}
The aim of this paper is to give a precise asymptotic description of some eigenvalue statistics stemming from random matrix theory. More precisely, we consider random determinants of the GUE, Laguerre, Uniform Gram and Jacobi beta ensembles and random characteristic polynomials evaluated at $1$ for matrices in the Circular and Circular Jacobi beta ensembles.
We use the framework of mod-Gaussian convergence to provide quantitative estimates of
their logarithmic behavior, as the size of the ensemble grows to infinity. We establish central limit theorems, Berry-Esseen bounds, moderate deviations and local limit theorems. Furthermore, we identify the scale at which the validity of the Gaussian approximation for the tails breaks.\\
With the exception of the Gaussian ensemble, all the results are obtained for a continuous choice of the Dyson parameter, that is for general $\b>0$. The proofs rely on explicit computations which are possible thanks to closed product formulas of the Laplace transforms.
\end{abstract}

{\bf MSC 2010 subject classifications:} Primary 15B52, 15A15, 60B20; Secondary \\
60F05, 60F10, 62H10.\\
{\bf Keywords:} Random matrices, Gaussian unitary ensemble, Laguerre ensemble, Jacobi ensemble, Gram ensemble, Circular Jacobi ensemble, determinants, characteristic polynomial,  moderate deviations, central limit theorem, Berry-Esseen estimate, local limit theorem.

\section{Introduction}
\textit{Random determinants}. The distribution of random determinants is a naturally important object in random matrix theory, with many applications in physics and multivariate statistics and, more recently, in numerical analysis, stochastic control theory, finance and telecommunications. As such, its study has a long history.\\
Besides random matrices with independent entries, much attention has been devoted to determinants of symmetric, Hermitian and self-dual matrices, exhibiting a much more challenging dependence structure. The latter groups include the Dyson threefold way $\b=1,2,4$ of classical random matrix models: the Gaussian, the Laguerre and the Jacobi ensembles.
The parameter $\b$ counts the number of real variables needed to specify a single matrix entry (1 for real, 2 for complex and 4 for quaternion numbers).
Dyson observed that, for the above mentioned ensembles, the induced eigenvalue densities correspond to the Boltzmann factor of a log-gas system on the real line at three different inverse temperatures $1, 2$ and $4$. 
For other positive values of $\b$, the log-gas interpretation is still meaningful, but the question regarding the existence of corresponding matrix models was only recently addressed by Dumitriu and Edelman \cite{DE02} and by Killip and Nenciu \cite{KN04}.
Their work brought a renewed interest in the study of the asymptotic behavior of random determinants for general $\b>0$. In the present paper we get some new results in this direction, which we anticipate in the following.\\
Let $A$ be an $n\times n$ matrix with i.i.d. real Gaussian entries and let $A^\dagger$ denote its transpose. The distribution of the real symmetric and positive semidefinite matrix $A^\dagger A$ is called Laguerre (or Wishart) orthogonal ensemble. It describes sample covariance matrices of normally distributed samples. In \cite{J82}, Jonsson discussed the following central limit theorem:
\begin{equation*}
\frac{\log\det \left(A^\dagger A\right)+n+\half\log n}{\sqrt{2\log n}}\rightarrow\mathcal{N}(0,1).
\end{equation*}
This result remains valid when the entries of $A$ are complex or quanternion real Gaussian, replacing the transposed matrix with the Hermitian adjoint, respectively the self-dual matrix. In \cite[Theorem 3.5]{R07}, Rouault showed that actually, the asymptotic log-normality holds true for the $\b$-Laguerre ensemble. The proof is based on the additive structure of the log-determinant. Indeed, by the uniqueness of the Mellin transform, it is possible to establish a Barlett-type theorem which allows to write the determinant as product of independent random variables. \\
An analogous result applies to random matrices distributed according to the $\b$-Uniform Gram or the $\b$-Jacobi ensembles (see \cite[Theorems 3.2 and 3.8]{R07}). \\
The aim of this paper is to get refinements of these central limit theorems. To do so, we provide asymptotic expansions of the Laplace transforms of the log-determinants and we apply the framework of mod-Gaussian convergence developed in~\cite{ FMN16, KN12, JKN11, DKN15, Speed}. This allows to detect the \emph{normality zone}, i.e. the scale up to which the Gaussian approximation for the tails (coming from the CLT) is valid. Moreover, we derive precise (without the $\log$) moderate deviations and local limit theorems. \\
For the $\b$-Gaussian ensemble, we did not succeed in finding asymptotic results valid for general $\b>0$. Indeed many exact formulas which are available for the $\b$-Laguerre, $\b$-Uniform Gram and $\b$-Jacobi ensemble, do not exist for matrices distributed according to the $\b$-Gaussian ensemble. However, for the Gaussian Unitary ensemble ($\b=2$) the Mellin transform of the absolute value of the determinant has been computed explicitly by Mehta and Normand \cite{MN98}. They used sparsity to write this statistics as a product of determinants and discussed its log-normality as the size of the sample growths to infinity. Recently, Tao and Vu \cite{TV12} proposed another proof of such central limit theorem, based on approximating the absolute value of the log-determinant as a sum of weakly dependent terms. We remark that the modulus is needed because, unlike the above mentioned ensembles where the eigenvalues are non-negative, the determinant of GUE depends on the signs of eigenvalues.
Again relying on the mod-Gaussian framework,  we get finer asymptotics and gain new insight into the domain of validity of the Gaussian limit. We thereby recover the Berry-Essen bound and the moderate deviations first established by Eichelsbacher and D\"oring in \cite{DE13} and we additionally provide local limit theorems.\\
It is clear that our studies focus on second order fluctuations; one can refer to \cite{R07}, \cite{AM01} for results of the first order. We shall also remark that, since the logarithm of the determinant  is a linear statistic of the empirical distribution of its eigenvalues, it would be natural to obtain the results mentioned above from those established for the limiting empirical spectral distribution (ESD). This was done for instance in \cite{BS04} at the scale of the central limit theorem.
Our point of view is different and rely on Barlett-type decompositions of the statistics considered, thereby loosing the connection with the ESD. Besides the different approach, our results are consitent with those already present in the literature and obtained by this spectral method (see \cite{R07}, Section 4 for more details).\\
\\
\textit{Random characteristic polynomials}. In circular ensembles, the eigenvalues are located on the unit circle. As a consequence, determinants are complex-valued random variables with unitary norm and the latter quantity, being known explicitly, does not require any further investigation. In this case, a much more informative eigenvalues function to look at is the characteristic polynomial.
One of the sources of interest in such object emerged with the work of Keating and Snaith \cite{KS00}, who used circular unitary matrices to study statistical properties of the zeros of the Riemann zeta function (and other L-functions). Motivated by the results we mention hereafter, we investigate the asymptotic behavior of characteristic polynomials 
for matrices distributed according to the $\b$-circular and the $\b$-circular Jacobi ensemble for all inverse temperatures $\b>0$.\\
Let $U$ be an $n\times n$ unitary matrix distributed according to the Haar measure on the unitary group. Such matrix represents an element of the circular unitary ensemble (CUE). In \cite{KS00}, Keating and Snaith discussed the log-normality of the real and imaginary part of the characteristic polynomial of $U$ evaluated at any point on the unit circle. In particular, they deduced that
\begin{equation*}
\frac{\log\left|\det\left(\Id-U\right)\right|}{\sqrt{\half\log n}}\rightarrow\mathcal{N}(0,1).
\end{equation*}
The proof is based on an exact formula for the Mellin transforms of $\left|\det\left(\Id-U\right)\right|$. The authors also show that the Gaussian distribution appears as limit for real and imaginary parts in the circular orthogonal and symplectic ensembles. See also \cite{BNR07} were analogous results were obtained thanks to a Barlett-type decomposition. 
Besides,  Su \cite{S09} noticed the possibility of extension of
such central limit theorems to all matrices $U$ distributed according to the $\b$-circular ensemble.\\
In this paper, we shall consider the $\b$-circular ensemble as a particular case of the $\b$-circular Jacobi ensemble
introduced in \cite{BNR09}. Indeed, the density of the eigenvalues implied by the latter model contains, as a sub-case, the eigenvalues density of the $\b$-circular ensemble.
In \cite{BNR09}, the authors were able to write the characteristic polynomial evaluated at one of $\b$-circular Jacobi matrices as a product of independent complex
random variables. This functional was then further investigated in its first and second order asymptotical behavior in \cite{NNR13}. 
Relying on additive structure of its logarithm and using the mod-Gaussian setting, we establish asymptotic results (central limit theorem, moderate deviations, local limit theorems) for the real part of such log-characteristic polynomial evaluated at $1$.
As a particular case, we obtain the same limiting theorems for the $\b$-circular ensemble, thereby recovering the above mentioned central limit theorem. We should mention that,  for Dyson's circular ensembles, mod-Gaussian convergence has been first established in \cite{KN12}.\\
\\
The structure of the paper goes as follows. In the coming section we recall the definition of mod-Gaussian convergence and the limiting theorems this convergence implies. We then present in section \ref{section:matrixmodels} the matrix models (Laguerre, Uniform Gram, Jacobi, Circular and Circular Jacobi) for the classical choices $\b=1,2,4$ of the Dyson parameter and their extensions to general $\b>0$. In addition, we recall the known facts about determinants and characteristic polynomials present in the literature. The main results of this article are in section \ref{section:results} and their proofs can be found in section \ref{section:proofs}. All such proofs rely on some fundamental theorems which are established in section \ref{section:asymptotics}.\\
\\
\textbf{Notation.} By $f=O(g)$ for $x\in X$, where $X$ is an arbitrary set on which $f$ is defined, we mean that there exists a constant $K$ (which may be a function of other parameters) such that $|f(x)|\le K|g(x)|$ for all $x\in X$. Any constant $K$ for which this hold is called \textit{implied constant}. The implied constant is said to be \textit{absolute} if it does not depend on further quantities. If a set $X$ is not explicitly given then the estimate is
assumed to hold for all sufficiently large values of the variable involved.

\section{Mod-Gaussian convergence and limiting results}
\label{section:modGaussian}
The framework of mod-Gaussian convergence has been introduced and developed by Delbaen, F\'eray, Jacod, Kowalski, M\'eliot and Nikeghbali  in \cite{FMN16, KN12, JKN11, DKN15, Speed}. Consider a sequence of random variables $\left(X_n\right)_{n\in\N}$, whose sequence of characteristic functions does not converge. It is well known that, by L\'evy continuity theorem, this prevents the $X_n$'s to converge in distribution. The main idea underpinning mod-Gaussian convergence, is to compare the characteristic functions of the $X_n$'s with the one of a Gaussian random variable with variance growing with $n$ and to see if the ratio of such Fourier transforms converges to a non-trivial limiting object $\psi$.
Intuitively, we try to see wether $X_n$ can be thought of as a Gaussian variable with growing variance, plus a residual noise which is encoded in the limiting function. Note that, in this setting, $\psi$ is not necessarily the characteristic function of a random variable.\\
Although mod-Gaussian convergence entails
much more information, for the aim of this paper, it will be an instrument to deduce asymptotic results, like a central limit theorem, moderate deviations and local limit theorems.
In what follows we summarize some of the limiting results that mod-Gaussian convergence implies.\\
\\
For $-\infty \leq c < 0 < d\leq \infty$, set
\begin{equation*}
S_{(c,d)}=\{z\in\C, c < \Re (z) < d\}\ , \qquad S_d = S_{-d , \infty}.
\end{equation*}

\begin{definition}\label{def:modGaussian}
Let $\left(X_n\right)_{n\in\N}$ be a sequence of real valued random variables, with moment generating functions all existing over the strip $S_{c,d}$. 
One says that $\left(X_n\right)_{n\in\N}$ converges mod-Gaussian on $S_{(c,d)}$, with parameters $t_n$ and limiting function $\psi:S_{c,d}\rightarrow\C$ if, locally uniformly on $S_{(c,d)}$,
\begin{equation*}
\psi_n(z):=\E\left[\ex^{zX_n}\right]\ex^{-t_n\frac{z^2}{2}}\longrightarrow\psi(z),
\end{equation*}
where $\left(t_n\right)_{n\in\N}$ is some deterministic sequence going to $+\infty$ with $n$.
\end{definition}

Throughout the remaining part of this section, $\left(X_n\right)_{n\in\N}$ denotes a sequence converging mod-Gaussian on $S_{(c,d)},$ with parameters $t_n$ and limiting function $\psi$. It is straightforward to see that mod-Gaussian convergence implies a central limit theorem for a proper renormalization of the sequence $\left(X_n\right)_{n\in\N}$. Indeed, for all $\xi\in\R$, we have
\begin{equation*}
\E\left[\ex^{i\xi\frac{X_n}{\sqrt{t_n}}}\right]=\ex^{\frac{\xi^2}{2}}\psi_n\left(\frac{i\xi}{\sqrt{t_n}}\right)=\ex^{\frac{\xi^2}{2}}\psi(0)(1+o(1))=\ex^{\frac{\xi^2}{2}}(1+o(1))
\end{equation*}
thanks to the uniform convergence of $\psi_n$ towards $\psi$. This implies, together with the L\'evy's continuity theorem, that
\begin{equation*}
Y_n:=\frac{X_n}{\sqrt{t_n}}\rightarrow\mathcal{N}(0,1)
\end{equation*}
or equivalently that 
\begin{equation}\label{cltratio}
\lim_{n\to\infty}\frac{\P\left[Y_n\ge x\right]}{\P\left[\mathcal{N}(0,1)\ge x\right]}=1.
\end{equation}
Actually, many other quantitative estimates which completely describe the flu ctuations of the normalized sequence $\left(Y_n\right)_{n\in\N}$, are consequences of the notion of mod-convergence. For instance, one can show that the \emph{normality zone} is of order $o(t_n)$, this meaning that the limit (\ref{cltratio}) holds true for any $x=o(t_n)$.
At the edges of such zone, i.e. at scale $O(t_n)$, this approximation breaks and the residue $\psi$ describes how to correct the Gaussian approximation of the tails.
These statements are made more precise in the following two theorems. We refer to \cite{FMN16} for a detailed proof of such results.

\begin{thm}[Central limit theorem up to scale $o(t_n)$, Theorem 4.3.1 in \cite{FMN16}]\ 
\label{extendedclt}\\
For  $x=o\left(\sqrt{t_n}\right)$,
\begin{equation*}
\P\left[\frac{X_n}{\sqrt{t_n}}\ge x\right]=\P\left[
\mathcal{N}(0,1)\ge x\right]\left(1+o(1)\right)=\frac{\ex^{-\frac{x^2}{2}}}{x\sqrt{2\pi}}\left(1+o(1)\right).
\end{equation*}
\end{thm}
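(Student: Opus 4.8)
The plan is to reduce the tail estimate to a central limit computation under an exponentially tilted law (a Cram\'er tilt). First I would fix $x=x_n=o(\sqrt{t_n})$ and rewrite the event as $\{X_n\ge y\}$ with $y=x\sqrt{t_n}$, so that $y=o(t_n)$. I would then choose the tilt parameter $h=h_n:=x/\sqrt{t_n}$, which tends to $0$ and lies in $(0,d)$ for $n$ large; by hypothesis the moment generating function
\[
\E\left[\ex^{hX_n}\right]=\ex^{t_n h^2/2}\,\psi_n(h)
\]
is then finite and controlled. I would define the tilted probability $\Q_n$ through $d\Q_n/d\P=\ex^{hX_n}/\E[\ex^{hX_n}]$ and set $Z_n:=(X_n-t_nh)/\sqrt{t_n}$; since $t_nh=y$ one has $\{X_n\ge y\}=\{Z_n\ge 0\}$ and $h(X_n-y)=xZ_n$.

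Next I would unwind the change of measure. Using $t_nh^2/2-hy=-x^2/2$, this yields the exact identity
\[
\P\left[X_n\ge y\right]=\ex^{-x^2/2}\,\psi_n(h)\,\E_{\Q_n}\!\left[\ex^{-xZ_n}\mathbf{1}_{\{Z_n\ge 0\}}\right].
\]
Because $h\to0$ and $\psi_n\to\psi$ locally uniformly with $\psi(0)=1$, the prefactor satisfies $\psi_n(h)\to1$. The remaining task is to analyse the residual expectation, and the key input is that $Z_n$ is asymptotically standard Gaussian under $\Q_n$: a direct computation from the definition gives
\[
\E_{\Q_n}\!\left[\ex^{isZ_n}\right]=\ex^{-s^2/2}\,\frac{\psi_n\!\left(h+is/\sqrt{t_n}\right)}{\psi_n(h)}\xrightarrow[n\to\infty]{}\ex^{-s^2/2},
\]
locally uniformly in $s$, since both arguments of $\psi_n$ tend to $0$ inside the strip. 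By L\'evy's theorem $Z_n\Rightarrow\mathcal{N}(0,1)$ under $\Q_n$.

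For fixed $x$ this already closes the argument: the map $z\mapsto\ex^{-xz}\mathbf{1}_{\{z\ge0\}}$ is bounded and continuous off the $\mathcal{N}(0,1)$-null set $\{0\}$, so the weak convergence yields $\E_{\Q_n}[\ex^{-xZ_n}\mathbf{1}_{\{Z_n\ge0\}}]\to\E[\ex^{-xZ}\mathbf{1}_{\{Z\ge0\}}]$ with $Z\sim\mathcal{N}(0,1)$. Tilting the Gaussian itself by $x$ shows $\P[\mathcal{N}(0,1)\ge x]=\ex^{-x^2/2}\E[\ex^{-xZ}\mathbf{1}_{\{Z\ge0\}}]$ exactly, so the two expressions agree and $\P[Y_n\ge x]=\P[\mathcal{N}(0,1)\ge x](1+o(1))$; the last displayed asymptotic is then the classical Mills-ratio estimate, valid as $x\to\infty$.

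The main obstacle is the genuinely moderate-deviation regime $x=x_n\to\infty$. There the test function $\ex^{-xz}\mathbf{1}_{\{z\ge0\}}$ degenerates, concentrating its mass on the shrinking interval $[0,O(1/x)]$, so weak convergence of $Z_n$ is no longer enough: one needs control of the law of $Z_n$ at the local scale $1/x$ near the origin. I would obtain this either via a local limit theorem, showing that the density of $Z_n$ under $\Q_n$ converges to $(2\pi)^{-1/2}$ uniformly on neighbourhoods of $0$ shrinking at rate $1/x$, or via an Esseen smoothing bound: writing $F_n$ for the law of $Z_n$ under $\Q_n$, integration by parts gives
\[
\left|\E_{\Q_n}\!\left[\ex^{-xZ_n}\mathbf{1}_{\{Z_n\ge0\}}\right]-\E\!\left[\ex^{-xZ}\mathbf{1}_{\{Z\ge0\}}\right]\right|\le 2\sup_{z}\left|F_n(z)-\Phi(z)\right|,
\]
so it suffices to show the Kolmogorov distance $\epsilon_n$ obeys $x_n\epsilon_n\to0$, uniformly over the family of tilts $h_n=x_n/\sqrt{t_n}$. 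Quantifying $\epsilon_n$ amounts to upgrading the locally uniform convergence $\psi_n(h+is/\sqrt{t_n})/\psi_n(h)\to1$ to an explicit rate, which is the technical heart of the matter and the point at which one invokes the refined analysis of \cite{FMN16}.
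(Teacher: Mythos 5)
This theorem is quoted by the paper from \cite{FMN16} without an in-paper proof (the authors explicitly refer the reader there), so the comparison is with the proof of Theorem 4.3.1 in that reference; your proposal follows essentially the same route as that source: a Cram\'er exponential tilt at $h=x/\sqrt{t_n}$, the exact identity $\P[X_n\ge x\sqrt{t_n}]=\ex^{-x^2/2}\psi_n(h)\,\E_{\Q_n}[\ex^{-xZ_n}\mathbf{1}_{Z_n\ge 0}]$, the matching Gaussian identity, and a smoothing bound to handle $x=x_n\to\infty$. Your algebra is correct throughout (the cancellation $t_nh^2/2-hy=-x^2/2$, the tilted characteristic function $\ex^{-s^2/2}\psi_n(h+\mathrm{i}s/\sqrt{t_n})/\psi_n(h)$, the factor-$2$ Esseen-type inequality, and the observation that the target $\E[\ex^{-xZ}\mathbf{1}_{Z\ge0}]\sim (x\sqrt{2\pi})^{-1}$ forces $x_n\epsilon_n\to0$), and the fixed-$x$ case is fully closed by portmanteau.

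The only incomplete point is the one you flag yourself: you defer the rate $\epsilon_n=o(1/x_n)$ to the ``refined analysis'' of \cite{FMN16}, but in fact no extra input (in particular no zone of control) is needed, and you could have closed it with the tools already on your table. Since the $\psi_n$ are analytic on the strip and converge locally uniformly to $\psi$ with $\psi(0)=1$, the points $h+\mathrm{i}s/\sqrt{t_n}$ with $0<h\to0$ and $|s|\le\delta\sqrt{t_n}$ all lie in a \emph{fixed} compact rectangle $[0,d/2]\times[-\delta,\delta]$ of the strip; uniform boundedness there plus Cauchy's estimates give a uniform Lipschitz bound $\left|\psi_n(h+\mathrm{i}s/\sqrt{t_n})-\psi_n(h)\right|\le L|s|/\sqrt{t_n}$, and $|\psi_n(h)|\ge 1/2$ eventually, so $\left|\psi_n(h+\mathrm{i}s/\sqrt{t_n})/\psi_n(h)-1\right|\le C|s|/\sqrt{t_n}$ on that range. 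Feeding this into the classical Berry--Esseen smoothing inequality with cutoff $T=\delta\sqrt{t_n}$ yields $\epsilon_n=\sup_z|F_n(z)-\Phi(z)|=O(t_n^{-1/2})$ uniformly over the tilts, whence $x_n\epsilon_n=O(x_n/\sqrt{t_n})=o(1)$ \emph{precisely} because of the hypothesis $x_n=o(\sqrt{t_n})$ --- this is exactly where the boundary of the normality zone enters, and it explains structurally why the theorem cannot hold beyond scale $O(\sqrt{t_n})$ for $x$. With that one estimate supplied, your proof is complete and coincides in substance with the cited one.
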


\begin{thm}[Precise moderate deviations in the scale $O(t_n)$ Theorem 4.2.1 in \cite{FMN16}]\ 
\label{preciseldp}\\
For $x\in (0,d)$,
\begin{equation*}
\P\left[X_n\ge t_nx\right]=\frac{\ex^{-t_n\frac{x^2}{2}}}{x\sqrt{2\pi t_n}}\psi(x)\left(1+o(1)\right),
\end{equation*}
and for $x\in(c,0)$,
\begin{equation*}
\P\left[X_n\le t_nx\right]=\frac{\ex^{-t_n\frac{x^2}{2}}}{|x|\sqrt{2\pi t_n}}\psi(x)\left(1+o(1)\right).
\end{equation*}
\end{thm}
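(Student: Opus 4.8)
The plan is to establish the statement for $x\in(0,d)$; the case $x\in(c,0)$ then follows by applying it to $-X_n$, which converges mod-Gaussian on $S_{(-d,-c)}$ with the same parameters $t_n$ and limiting function $z\mapsto\psi(-z)$. The starting point is an exponential change of measure (Esscher tilt) at parameter $x$, chosen precisely so that the tilted law is recentered at the deviation level $t_nx$. Since the moment generating function exists on the strip and $x\in(0,d)$, the quantity $\E[\ex^{xX_n}]=\psi_n(x)\ex^{t_n x^2/2}$ is finite and positive, and I define a probability measure $\widetilde{\P}_n$ by $\mathrm{d}\widetilde{\P}_n/\mathrm{d}\P=\ex^{xX_n}/\E[\ex^{xX_n}]$. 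Setting $\widetilde{X}_n:=X_n-t_nx$ and inserting $1=\ex^{xX_n}\ex^{-xX_n}$ inside the expectation defining $\P[X_n\ge t_nx]$, a direct manipulation gives the exact identity
\begin{equation*}
\P[X_n\ge t_nx]=\psi_n(x)\,\ex^{-t_n\frac{x^2}{2}}\,I_n,\qquad I_n:=\widetilde{\E}_n\!\left[\ex^{-x\widetilde{X}_n}\mathbf 1_{\widetilde{X}_n\ge 0}\right].
\end{equation*}
Because $\psi_n(x)\to\psi(x)$ by hypothesis, the problem reduces to proving $I_n\sim (x\sqrt{2\pi t_n})^{-1}$.

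Next I would identify the fluctuations of the recentered tilted variable. Using $(x+i\xi)^2=x^2+2ix\xi-\xi^2$, a short computation shows that the characteristic function of $\widetilde{X}_n$ is
\begin{equation*}
\widetilde{\E}_n\!\left[\ex^{i\xi\widetilde{X}_n}\right]=\frac{\psi_n(x+i\xi)}{\psi_n(x)}\,\ex^{-t_n\frac{\xi^2}{2}},
\end{equation*}
so that the deterministic Gaussian factor $\ex^{-t_n\xi^2/2}$ supplied by the mod-Gaussian normalization survives the tilt, while the residual ratio converges locally uniformly to $\psi(x+i\xi)/\psi(x)$ and equals $1$ at $\xi=0$. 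In particular $\widetilde{X}_n/\sqrt{t_n}$ converges to $\mathcal N(0,1)$ and $\widetilde X_n$ behaves like a centered Gaussian of variance $t_n$. Since the weight $\ex^{-x\widetilde X_n}\mathbf 1_{\widetilde X_n\ge 0}$ in $I_n$ concentrates on $\widetilde X_n=O(1)$, i.e.\ on a window of width $t_n^{-1/2}$ for $\widetilde X_n/\sqrt{t_n}$ around the origin, the heuristic is that $I_n\approx\int_0^\infty\ex^{-xu}\,\frac{1}{\sqrt{2\pi t_n}}\,\mathrm{d}u=(x\sqrt{2\pi t_n})^{-1}$, namely the density of $\mathcal N(0,t_n)$ at $0$ times the integral of the weight.

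To turn this into a proof I would combine Fourier inversion with Laplace's method. With $g(u)=\ex^{-xu}\mathbf 1_{u\ge 0}$, whose Fourier transform is $(x+i\xi)^{-1}$, Parseval's identity yields
\begin{equation*}
I_n=\frac{1}{2\pi\,\psi_n(x)}\int_{\R}\frac{\psi_n(x+i\xi)}{x+i\xi}\,\ex^{-t_n\frac{\xi^2}{2}}\,\mathrm{d}\xi .
\end{equation*}
The factor $\ex^{-t_n\xi^2/2}$ localises the integral to $|\xi|\lesssim t_n^{-1/2}$; there I would replace $\psi_n(x+i\xi)/(x+i\xi)$ by its value $\psi_n(x)/x\to\psi(x)/x$ at $\xi=0$ via the local uniform convergence of $\psi_n$, and use $\int_\R\ex^{-t_n\xi^2/2}\mathrm{d}\xi=\sqrt{2\pi/t_n}$, which produces the announced constant once $\psi_n(x)$ cancels and leaves the prefactor $\psi(x)$.

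The main obstacle is the rigorous control of this integral away from $\xi=0$. Mod-Gaussian convergence only furnishes locally uniform control of $\psi_n$ on the strip, whereas the inversion integral runs over all of $\R$ and the amplitude $(x+i\xi)^{-1}$ decays merely like $|\xi|^{-1}$, so it is not absolutely integrable; the crude bound $|\psi_n(x+i\xi)/\psi_n(x)|\,\ex^{-t_n\xi^2/2}\le 1$ (valid since a characteristic function is bounded by $1$) is not by itself sufficient. I would therefore split the integral at a fixed $\delta$: on $|\xi|\le\delta$ the local uniform convergence plus a second-order expansion control the main term and its error, while on $|\xi|>\delta$ one must show the contribution is negligible relative to $t_n^{-1/2}$, exploiting the Gaussian damping against the bound on the characteristic function together with a smoothing of the discontinuous weight $g$ (equivalently, a local limit theorem for the tilted law near the origin) to defeat the non-integrability of $(x+i\xi)^{-1}$. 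Quantifying this tail uniformly in $n$ is the delicate point on which the estimate rests; the remainder is bookkeeping.
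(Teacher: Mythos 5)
This theorem is not proved in the paper at all: it is quoted from \cite{FMN16} (Theorem 4.2.1), to which the authors explicitly refer. Your proposal follows essentially the same route as that source's proof: the Esscher tilt at the saddle point $x$, the exact identity $\P\left[X_n\ge t_nx\right]=\psi_n(x)\,\ex^{-t_n\frac{x^2}{2}}\,\widetilde{\E}_n\left[\ex^{-x\widetilde{X}_n}\mathbf{1}_{\widetilde{X}_n\ge 0}\right]$, the computation of the tilted recentred characteristic function $\frac{\psi_n(x+i\xi)}{\psi_n(x)}\ex^{-t_n\frac{\xi^2}{2}}$ (your cancellation of the linear phase is correct), the Parseval localization producing the factor $\left(x\sqrt{2\pi t_n}\right)^{-1}$, and the reduction of the negative case to the positive one via $-X_n$, which indeed converges mod-Gaussian on $S_{(-d,-c)}$ with limiting function $z\mapsto\psi(-z)$.

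That said, the step you flag and leave open is genuinely the crux, and your own diagnosis shows why the argument is incomplete as written: locally uniform mod-Gaussian convergence gives no control whatsoever on $\psi_n(x+i\xi)$ for $|\xi|$ outside a fixed compact set, the fallback bound $\left|\widetilde{\E}_n\left[\ex^{i\xi\widetilde{X}_n}\right]\right|\le 1$ leaves a divergent integral $\int_{|\xi|>\Delta}|x+i\xi|^{-1}d\xi$, and in fact Parseval's identity for the raw weight $g(u)=\ex^{-xu}\mathbf{1}_{u\ge 0}$ is not directly justified since $\widehat{g}\notin L^1$. The way \cite{FMN16} closes this is exactly the smoothing you allude to, made into a two-step lemma: first one proves $\sqrt{t_n}\,\widetilde{\E}_n\left[h(\widetilde{X}_n)\right]\to\frac{1}{\sqrt{2\pi}}\int_{\R}h(u)\,du$ for all test functions $h$ with $h,\widehat{h}\in L^1$ (here Parseval is legitimate, the Gaussian factor localizes to $|\xi|=O(t_n^{-1/2})$, and local uniform convergence of $\psi_n$ handles the main term, as in your computation); second, since $g$ is bounded, exponentially decaying and has a single jump, one sandwiches $g_\epsilon^-\le g\le g_\epsilon^+$ with $\widehat{g_\epsilon^\pm}\in L^1$ and $\int_{\R}\left(g_\epsilon^+-g_\epsilon^-\right)=O(\epsilon)$, applies the lemma to $g_\epsilon^\pm$ and to their difference to see the discrepancy is $O\left(\epsilon\, t_n^{-1/2}\right)$, and lets $\epsilon\to 0$ after $n\to\infty$. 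With this insertion your argument is a complete and faithful reconstruction of the cited proof; as submitted, it is a correct skeleton whose hardest estimate is named rather than executed.
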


Under the additional assumption of mod-Gaussian convergence with a \emph{zone of control}, it is possible to deduce the speed of convergence of the above mentioned CLT. Such notion has been introduced in \cite[Definition 4, Remark 2]{Speed} in the context of mod-stable convergence.

\begin{definition} 
Consider the following statements.
\begin{enumerate}[label=(\textbf{Z\arabic*}),ref=(Z\arabic*)]
\item \label{Z1} Fix $v \ge 1$, $w>0$ and $\g\in\R$. There exists a zone of convergence $[-Dt_n^\g,Dt_n^\g]$, $D>0$, such that for all $\xi$ in this zone, $\left|\psi_n(i\xi)-1\right|$ is smaller than
\begin{equation}
K_1|\xi|^v e^{K_2 |\xi|^w} \nonumber
\end{equation}
for some positive constants $K_1$ and $K_2$, that are independent of $n$.
\item \label{Z2} One has 
\begin{equation}
w \geq 2;\qquad -\half<\g\le\frac{1}{w-2};\qquad D\le \left(\frac{1}{4K_2}\right)^{\frac{1}{w-2}}. \nonumber
\end{equation}
\end{enumerate}
If Conditions (Z1) holds for some parameters $\g>-\half$ and $\nu, w, D, K_1, K_2$, then 
(Z2) can always be forced by increasing $w$, and then decreasing $D$ and $\g$. If Conditions (Z1) and (Z2) are satisfied,  we say that we have mod-Gaussian convergence for the sequence $\left(X_n\right)_{n\in\N}$ with zone of control $[-Dt_n^\g,Dt_n^\g]$ and index of control $(v,w)$. 
\end{definition}

\begin{thm}[Speed of convergence, Theorem 15 in \cite{Speed}]\ \label{speed}\\
Assume that conditions (Z1) and (Z2) hold, together with the inequality
 $\g\le \frac{v-1}{2}$. Then,
\begin{equation*}
d_{Kol}\left(\frac{X_n}{\sqrt{t_n}}, \mathcal{N}(0,1)\right)\le C(D, v,K_1)\frac{1}{t_n^{\g+\half}},
\end{equation*}
where $d_{Kol}(\cdot, \cdot)$ is the Kolmogorov distance and 
\begin{equation}
\label{defc}
C(D,v,K_1) = \frac{3}{2 \pi} \left(2^{v-1} \G\left(\frac{v}{2} \right)K_1 + \frac{7}{D} \sqrt{\frac{\pi}{2}}\right).
\end{equation}
\end{thm}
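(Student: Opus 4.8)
The plan is to estimate the Kolmogorov distance through a classical smoothing (Esseen) inequality, which controls $d_{Kol}$ between a law and $\mathcal{N}(0,1)$ by an integral of the difference of their characteristic functions over a finite window $[-T,T]$, plus a boundary term governed by the bound $m=1/\sqrt{2\pi}$ on the Gaussian density. The starting observation is that the characteristic function of $Y_n=X_n/\sqrt{t_n}$ factors neatly through $\psi_n$: substituting $z=i\xi/\sqrt{t_n}$ into $\psi_n(z)=\E[\ex^{zX_n}]\ex^{-t_nz^2/2}$ gives
\begin{equation*}
\E\!\left[\ex^{i\xi Y_n}\right]=\psi_n\!\left(\tfrac{i\xi}{\sqrt{t_n}}\right)\ex^{-\xi^2/2},
\end{equation*}
so that the relevant difference is $\E[\ex^{i\xi Y_n}]-\ex^{-\xi^2/2}=\ex^{-\xi^2/2}\bigl(\psi_n(i\xi/\sqrt{t_n})-1\bigr)$. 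First I would feed this into the smoothing inequality with truncation $T=Dt_n^{\g+\half}$, chosen precisely so that $|\xi|\le T$ is equivalent to $|\xi/\sqrt{t_n}|\le Dt_n^\g$, i.e. so that the argument $\xi/\sqrt{t_n}$ remains inside the zone of convergence of \ref{Z1}.

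On this window \ref{Z1} yields $|\psi_n(i\xi/\sqrt{t_n})-1|\le K_1|\xi|^v t_n^{-v/2}\exp(K_2|\xi|^w t_n^{-w/2})$, so after dividing by $|\xi|$ the integrand of the smoothing inequality is bounded by $K_1 t_n^{-v/2}|\xi|^{v-1}\exp\!\bigl(-\xi^2/2+K_2|\xi|^w t_n^{-w/2}\bigr)$. The crux of the argument, and the step I expect to be the main obstacle, is to show that the quadratic Gaussian weight dominates the extra exponential uniformly on $[-T,T]$, namely
\begin{equation*}
-\frac{\xi^2}{2}+\frac{K_2|\xi|^w}{t_n^{w/2}}\le-\frac{\xi^2}{4},\qquad |\xi|\le T.
\end{equation*}
This is exactly what \ref{Z2} is engineered to guarantee: requiring $K_2|\xi|^{w-2}\le t_n^{w/2}/4$ reduces (using $w>2$) to $|\xi|\le(1/4K_2)^{1/(w-2)}t_n^{w/(2(w-2))}$, and comparing the exponents of $t_n$ forces $\g+\half\le w/(2(w-2))$, i.e. $\g\le 1/(w-2)$, while comparing the constants forces $D\le(1/4K_2)^{1/(w-2)}$ — precisely the two constraints of \ref{Z2}. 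The borderline $w=2$ is handled separately, the perturbation $K_2\xi^2/t_n$ being eventually $\le\xi^2/4$.

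With this domination in hand I would extend the integral to all of $\R$ and evaluate it by the substitution $u=\xi^2/4$,
\begin{equation*}
\int_{-\infty}^{\infty}|\xi|^{v-1}\ex^{-\xi^2/4}\,d\xi=2^{v}\,\G\!\left(\tfrac v2\right),
\end{equation*}
so the integral term is of order $t_n^{-v/2}$ with constant $\propto 2^{v-1}\G(v/2)K_1$, while the boundary term of the smoothing inequality is of order $m/T\asymp(D\,t_n^{\g+\half})^{-1}$, contributing the $\tfrac{7}{D}\sqrt{\pi/2}$ piece. Finally I would invoke the remaining hypothesis $\g\le(v-1)/2$, equivalently $\g+\half\le v/2$, to bound $t_n^{-v/2}\le t_n^{-(\g+\half)}$; this makes $t_n^{-(\g+\half)}$ the common, slowest decay rate of both contributions and collects them into the single prefactor $C(D,v,K_1)$. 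The precise numerical constants $3/2$ and $7$ are not structural: they stem from the sharp form of the Esseen inequality used in \cite{Speed} together with the bookkeeping of the two bounds above.
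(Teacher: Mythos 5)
Your proposal is correct, but note that the paper itself contains no proof of this statement: it is imported verbatim as Theorem 15 of \cite{Speed}, so the only meaningful comparison is with the argument there, which your sketch reproduces faithfully --- Esseen-type smoothing with truncation $T=Dt_n^{\gamma+\frac{1}{2}}$ chosen so that $\xi/\sqrt{t_n}$ stays in the zone of control, the (Z1) bound after the rescaling $\xi\mapsto\xi/\sqrt{t_n}$, the domination $-\xi^2/2+K_2|\xi|^w t_n^{-w/2}\le-\xi^2/4$ whose window, exponent and constant constraints you correctly identify as being exactly the content of (Z2), the moment integral $\int_{\R}|\xi|^{v-1}\ex^{-\xi^2/4}d\xi=2^{v}\G\left(\frac{v}{2}\right)$, and the hypothesis $\gamma\le\frac{v-1}{2}$ to make $t_n^{-v/2}\le t_n^{-(\gamma+\frac{1}{2})}$ the common rate. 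Your closing disclaimer is also apt: the constants $\frac{3}{2\pi}$ and $\frac{7}{D}\sqrt{\frac{\pi}{2}}$ come from the particular smoothing lemma used in \cite{Speed} rather than the classical Esseen boundary term $24m/(\pi T)$, and the borderline case $w=2$ (where the perturbation $K_2\xi^2/t_n$ is only eventually dominated) carries the usual ``for $n$ large enough'' qualification, neither of which affects the structure of the argument.
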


In the other direction, we can also look at the fine scale behavior of $X_n$ by trying to control probabilities such as
\begin{equation*}
\P\left[X_n-x_n\in (a,b)\right],
\end{equation*}
where $x_n$ can grow with $n$ and $a$ and $b$ are fixed real numbers. This result has been established in \cite{}, relying on the notion of zone of control.

\begin{thm}[Local limit theorem, Theorem  in \cite{FMN17}]\ \label{llt}\\
 Let $x\in\R$ and $(a,b)$ be a fixed interval, with $a<b$. 
 Assume that conditions (Z1) and (Z2) hold. Then for every exponent $\d\in\left(0,\g+\frac{1}{2}\right)$, 
 \begin{equation*}
\lim_{n\to\infty}(t_n)^{\d}\P\left[Y_n-x\in\frac{1}{t_n^\d} (a,b)\right]=\frac{b-a}{\sqrt{2\pi}}.
\end{equation*}
In particular, assuming $\g>0$, with $\d=\half$ one obtains
 \begin{equation*}
\lim_{n\to\infty}(t_n)^{\half}\P\left[X_n-x\left(t_n\right)^\half \in (a,b)\right]=\frac{b-a}{\sqrt{2\pi}}.
\end{equation*}
\end{thm}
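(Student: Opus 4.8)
The plan is to combine Fourier inversion with a smoothing (sandwiching) argument, using the zone of control to replace the residue $\psi_n$ by its value $\psi(0)=1$ at the relevant scale. Put $Z_n := t_n^{\d}\left(Y_n-x\right)= t_n^{\d-\half}X_n-t_n^{\d}x$, so that the event in question is $\{Z_n\in(a,b)\}$ and the goal is to show $t_n^{\d}\,\P[Z_n\in(a,b)]\to(b-a)/\sqrt{2\pi}$. Since $\E[\ex^{i\eta X_n}]=\psi_n(i\eta)\ex^{-t_n\eta^2/2}$ by Definition~\ref{def:modGaussian}, the characteristic function of $Z_n$ is
\begin{equation*}
\E\!\left[\ex^{i\xi Z_n}\right]=\ex^{-i\xi t_n^{\d}x}\,\psi_n\!\left(i\xi t_n^{\d-\half}\right)\ex^{-\xi^2 t_n^{2\d}/2}.
\end{equation*}
For any $g\in L^1(\R)$ with integrable Fourier transform $\widehat g$, Parseval's formula gives $\E[g(Z_n)]=\frac{1}{2\pi}\int_\R\widehat g(\xi)\,\E[\ex^{i\xi Z_n}]\,d\xi$, and this identity is the backbone of the argument.

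The indicator $\mathds{1}_{(a,b)}$ is not an admissible test function, so I would sandwich it between two smooth \emph{band-limited} functions $g_\epsilon^-\le\mathds{1}_{(a,b)}\le g_\epsilon^+$ whose Fourier transforms are supported in a fixed compact set $[-R_\epsilon,R_\epsilon]$ and whose integrals $\int_\R g_\epsilon^\pm(u)\,du$ converge to $b-a$ as $\epsilon\to0$ (for instance Beurling--Selberg majorants and minorants). Band-limitedness is essential: the hypotheses give no control on $\psi_n$, hence on the characteristic function of $X_n$, outside the zone of control, so the frequency variable must be kept bounded. Applying Parseval to $g_\epsilon^\pm$ squeezes $\P[Z_n\in(a,b)]$ between two integrals over $|\xi|\le R_\epsilon$; it then suffices to evaluate these asymptotically, let $n\to\infty$, and finally let $\epsilon\to0$.

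On the window $|\xi|\le R_\epsilon$ the argument fed into the residue is $\eta=\xi t_n^{\d-\half}$, with $|\eta|\le R_\epsilon t_n^{\d-\half}$. Here the assumption $\d<\g+\half$ enters: it forces $t_n^{\d-\half}=o(t_n^{\g})$, so for $n$ large the whole window lies inside the zone $[-Dt_n^{\g},Dt_n^{\g}]$ and the bound~\ref{Z1} applies. Writing $\psi_n(i\xi t_n^{\d-\half})=1+\big(\psi_n(i\xi t_n^{\d-\half})-1\big)$ splits $\E[g_\epsilon^\pm(Z_n)]$ into a main term and a remainder. The main term, obtained by replacing $\psi_n$ by $\psi(0)=1$, is the Fourier integral of a Gaussian; it equals the convolution of $g_\epsilon^\pm$ with the density of $\mathcal{N}(0,t_n^{2\d})$ taken at $-t_n^{\d}x$, and multiplying by $t_n^{\d}$ and letting first $n\to\infty$, then $\epsilon\to0$, yields the right-hand side by a direct Gaussian computation. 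The remainder is controlled using~\ref{Z1}: on the bounded window $|\xi|^w\le R_\epsilon^{w-2}\xi^2$, so that $K_2|\xi|^w t_n^{(\d-\half)w}\le \tfrac{1}{4}\xi^2 t_n^{2\d}$ for $n$ large, precisely because $\d(w-2)-\tfrac{w}{2}<0$, an inequality guaranteed by $\d<\g+\half$ together with $\g\le\frac{1}{w-2}$ from~\ref{Z2}. The Gaussian factor then absorbs the exponential growth, and rescaling $\xi\mapsto\xi/t_n^{\d}$ shows the remainder is $o(t_n^{-\d})$ since $v\ge1$.

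The main obstacle is exactly this remainder estimate: one must verify that the Gaussian damping $\ex^{-\xi^2 t_n^{2\d}/2}$ beats the growth $\ex^{K_2|\xi|^w t_n^{(\d-\half)w}}$ from~\ref{Z1} throughout the admissible range of $\d$ (in particular when $\d>\half$, where both factors diverge), and that the surviving error still vanishes after multiplication by $t_n^{\d}$. Conditions~\ref{Z1}, \ref{Z2} and the constraint $\d<\g+\half$ are tuned precisely so that this balance holds; carefully tracking the exponents of $t_n$ through the rescaling, while keeping the frequency window inside the zone of control, is the delicate computational core of the proof.
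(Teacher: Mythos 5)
The paper itself contains no proof of Theorem \ref{llt}: it is quoted verbatim (up to a typo, see below) from \cite{FMN17}, so the only meaningful comparison is with the proof given there. Your route --- Parseval's identity applied to band-limited Beurling--Selberg majorants and minorants of $\mathds{1}_{(a,b)}$, with the zone of control guaranteeing that the whole frequency window $|\xi|\le R_\epsilon$, i.e.\ $|\eta|\le R_\epsilon t_n^{\d-\half}$ after the substitution $\eta=\xi t_n^{\d-\half}$, sits inside $[-Dt_n^{\g},Dt_n^{\g}]$ because $\d-\half<\g$ --- is essentially that proof, and your exponent bookkeeping is sound: $\d(w-2)<\frac{w}{2}$ does follow from $\d<\g+\half\le\frac{1}{w-2}+\half$ (the case $w=2$ being trivial), so the Gaussian factor absorbs $\ex^{K_2|\xi|^w t_n^{(\d-\half)w}}$, and after rescaling $\xi\mapsto \xi t_n^{-\d}$ the remainder is $O\bigl(t_n^{v(\d-\half)-\d(v+1)}\bigr)=O\bigl(t_n^{-\d-\frac{v}{2}}\bigr)=o\bigl(t_n^{-\d}\bigr)$ since $v\ge 1$.

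There is, however, one genuine error, located exactly at the last step of your main-term computation. Carrying out the ``direct Gaussian computation'' you describe, with $\phi_{\s^2}$ the centered Gaussian density of variance $\s^2$:
\[
t_n^{\d}\left(g_\epsilon^{\pm}*\phi_{t_n^{2\d}}\right)\!\left(-t_n^{\d}x\right)
=\int_{\R} g_\epsilon^{\pm}(u)\,\frac{1}{\sqrt{2\pi}}\,\ex^{-\frac{1}{2}\left(x+u\,t_n^{-\d}\right)^2}du
\;\xrightarrow[n\to\infty]{}\;\frac{\ex^{-x^2/2}}{\sqrt{2\pi}}\int_{\R} g_\epsilon^{\pm}(u)\,du,
\]
so that after $\epsilon\to 0$ the limit is $\frac{b-a}{\sqrt{2\pi}}\,\ex^{-x^2/2}$ and not $\frac{b-a}{\sqrt{2\pi}}$; your assertion that this ``yields the right-hand side'' holds only when $x=0$. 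The sanity check $X_n=\sqrt{t_n}\,G$ with $G\sim\mathcal{N}(0,1)$, for which $\psi_n\equiv 1$ and all hypotheses hold, confirms that the factor cannot be removed: $t_n^{\d}\,\P\left[G-x\in t_n^{-\d}(a,b)\right]\to (b-a)\frac{\ex^{-x^2/2}}{\sqrt{2\pi}}$. In fact the statement in \cite{FMN17} carries the factor $\ex^{-x^2/2}$, and the version transcribed in the present paper has dropped it --- a typo that is harmless downstream, since every application in Section \ref{section:results} (Theorems \ref{GUE-llt}, \ref{determinants-llt}, \ref{charactpoly-llt}) invokes the theorem with $x=0$, where the factor equals $1$. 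So your method is the correct one and, executed honestly, proves the corrected statement; a blind run of your own plan should have surfaced this discrepancy with the printed right-hand side rather than asserting agreement with it.
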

We shall see that all the eigenvalues statistics considered satisfy the definition (\ref{def:modGaussian}), together with conditions \ref{Z1} and \ref{Z2}.

\section{The presentation of the  eigenvalues statistics}
\label{section:matrixmodels}
In this long section we present our different matrix models. Their common feature is to introduce random determinants or random characteristic polynomials which can be written as a product of independent random variables. Such product-structure comes from Barlett-type theorems for real eigenvalues matrix models and from the Verblunsky and modified Verblunsky coefficients in the circular case. The eigenvalues statistics we consider are the logarithms of such random determinants and characteristic polynomials. In particular, we shall recall the closed formulas for their Laplace transforms (or equivalently for the Mellin transforms of determinants and characteristic polynomials themselves), which are the key ingredient in order to show mod-Gaussian convergence.
\subsection{Random determinants}
In the random matrix context, the choice of the $\mathbb{R}$-division algebra on which the entries are defined, corresponds to a different parameter $\b$ in the joint eigenvalue density, namely $\b=1$ for real entries, $\b=2$ for complex entries and $\b=4$ for the quaternions.
For such classical values of the Dyson's parameter, many of the models we are interested in, can be constructed starting from a Gaussian random matrix $A$. This is a matrix with independent and identically distributed entries, which are real, complex or quaternion standard normal.
Throughout, we denote by $A^\dagger$ the transpose, the hermitian conjugate or the dual of $A$ according as $A$ is real, complex or quaternion.\\
\\
\textit{Hermite or Gaussian ensemble}. Let $A$ be an $n \times n$ Gaussian matrix over $\R, \C$ or $\mathbb{H}$.
The distribution of the $n \times n$ random matrix $\frac{A+A^\dagger}{2}$ is called  the Gaussian orthogonal (GOE), unitary (GUE), respectively  symplectic  ensemble (GSE).  The joint density function of the eigenvalues is given by
\begin{equation}\label{Her-density}
\ff_n^{H, \b}(\l_1, \dots, \l_n) = \frac{1}{Z_n^H(\b) } \prod_{1 \leq j < k\leq n} |\l_k - \l_j|^\b \prod_{k=1}^n \ex^{-\frac{\l_k^2}{2}}
\end{equation}
for $\b=1,2$ and $4$, with normalizing constant \begin{equation*}
Z_n^H(\b) = (2\pi)^{n/2}  \prod_{k=1}^n \frac{\G\left(1+k\bh\right)}{\G\left(1+\bh\right)}.
\end{equation*} 
Using a tridiagonal reduction algorithm, Dumitriu and Edelman \cite{DE02} proved that any other choice of $\b>0$ in (\ref{Her-density}) corresponds to a matrix model constructed with entries from classical distributions. However, the lack of appropriate Selberg integrals, makes it much harder to investigate the distribution of determinants in all $\b$-Hermite ensembles.
Our study is restricted to GUE random matrices, which we shall denote by $W_n^H$. For such model, Mehta and Normand \cite{MN98} computed explicitly the Mellin transform of the absolute value of the determinant. They obtained the following formula
\begin{equation}\label{Her-Mellin}
\E\left[\left|\det W^H_n\right|^z\right]=2^{\frac{nz}{2}}\prod_{k=1}^{n}\frac{\G\left(\frac{z+1}{2}+\lfloor \frac{k}{2}\rfloor\right)}{\G\left(\frac{1}{2}+\lfloor \frac{k}{2}\rfloor\right)}
\end{equation}
which is well defined for any $z\in\C$ with $\Re (z)>-1.$\\
Although they did not interpret the factors in (\ref{Her-Mellin}) probabilistically, they are the Mellin transforms of $\chi$-distributed random variables. By uniqueness of the Mellin trasform, this implies that
\begin{equation*}
\E\left[\left|\det W^H_n\right|^z\right]\stackrel{(d)}{=}\prod_{k=1}^n \rho_{k,n}^{H}
\end{equation*}
where for $k=1,\ldots,n$, $\rho_{k,n}^{H}$ are independent and
\begin{equation*}
\rho_{k,n}^{H} \sim \chi_2\left(\lfloor \frac{k}{2}\rfloor+1\right).
\end{equation*}
This decomposition was obtained by Edelman and La Croix (\cite{Edel1}) via  describing $\left|\det W^H_n\right|$ as the product of the GUE singular values.\\
\\
\textit{Laguerre or Wishart ensemble.}
Let $A$ be an $n \times r$, with $r \le n$,  Gaussian matrix over $\R, \C$ or $\mathbb{H}$.
The distribution of the $r \times r$ random matrix $A^\dagger A$ is called Laguerre real, complex, symplectic ensemble. Clearly, the product $A^\dagger A$ is positive semi-definite, hence its eigenvalues $\left(\l_1, \dots, \l_r\right)$ are real and non negative. Their joint density function on the set $(0,\infty)^r$ is proportional to 
\begin{equation*}
\prod_{1 \leq j < k \leq r} |\l_k - \l_j|^{\b}\prod_{k=1}^r \left( \l_k^{\bh(n-r+1)-1} \ex^{-\frac{\l_k}{2}}\right)
\end{equation*}
for $\b=1,2$ and $4$ respectively. 
Tridiagonal models for the $\b$-Laguerre ensembles have been constructed in \cite{DE02}. In the present article, we discuss the regime $r=n$ and denote by $W_n^{L,\b}$ the $\b$-Laguerre distributed random matrix of dimension $n\times n$. In this case, the joint eigenvalues density becomes
\begin{equation*}
\ff^{L, \b}_{n}\left(\l_1, \dots, \l_n\right) = \frac{1}{Z^{L}_{n} \left( \b \right)} \prod_{1 \leq j < k \leq n} |\l_k - \l_j|^{\b}\prod_{k=1}^n \left( \l_k^{\bh-1} \ex^{-\frac{ \l_k}{2}}\right) ,
\end{equation*}
where the normalization constant $Z^{L}_{n} \left( \b \right)$ can be calculated using the Selberg integral (17.6.5) in \cite{Mehta} and equals
\begin{equation*}
Z_n^L(\b) =2^{\bh n^2}\prod_{k=1}^n\frac{\G\left(1+\bh k\right)\G\left(\bh k\right)}{\G\left(1+\bh\right)}.
\end{equation*}
Using the same Selberg formula, it is straightforward to deduce that,
\begin{equation}\label{Lag-Mellin}
\E\left[\left(\det W_n^{L, \b}\right)^z\right]=2^{nz} \prod_{k=1}^n \frac{\G\left(\bh k+z\right) }{\G\left(\bh k\right)} 
\end{equation}
for any $\b>0$.
This implies the following Barlett-type decomposition
\begin{equation*}
\det W^{L,\b}_n\stackrel{(d)}{=}\prod_{k=1}^r \rho_{k,n}^{L,\b},
\end{equation*} 
where for $k=1,\ldots,r$ the variables $\rho_{k,n}^{L,\b}$  are independent and
\begin{equation*}
\rho_{k,n}^{L,\b}\sim \text{Gamma}\left(\bh(n-j+1),\half\right).
\end{equation*} 

\textit{Jacobi or MANOVA ensemble.}
Let $A_1$ and $A_2$  be $n_1\times r$ and $n_2 \times r$  Gaussian matrices over $\R, \C$ or $\mathbb{H}$ with $r \leq \min(n_1, n_2)$.  The distribution of the $r \times r$ matrix
\begin{equation}\label{Jac-matrix}
\left(M_1^\dagger M_1 + M_2^\dagger M_2\right) ^{-1} M_1^\dagger M_1
\end{equation}
is called Jacobi real, complex, respectively symplectic ensemble. As for the previous matrix models, it can be generalized to all $\b >0$ (see \cite{DE02}, \cite{KN04} for the corresponding tridiagonal matrix models). The joint density of the eigenvalues on the set $(0,1)^r$ is given by
\begin{equation*}
f_{r, n_1, n_2}^{J,\b}(\l_1,\ldots,\l_r)=\frac{1}{Z_{r,n_1,n_2}^{J} (\b)}\prod_{1 \le k < j \le r} |\l_j - \l_k|^{\b}\prod_{k=1}^{r} \l_k^{\bh(n_1-r+1)-1} \left( 1 - \l_k \right)^{\bh (n_2-r+1)-1}
\end{equation*}
where the normalization $Z_{r,n_1,n_2}^{J} (\b)$ can be obtained by means of the Selberg integral (17.1.3)  in \cite{Mehta},
\begin{equation*}
Z_{r,n_1,n_2}^{J} (\b)= \prod_{k=1}^{r} \frac{\G\left( 1 + \bh k\right) \G\left( \bh(n_2-r+k)\right) \G\left(\bh(n_1-r+k )\right)}{\G\left( 1+\bh\right) \G\left( \bh \left( n_1+n_2 + k - r\right)\right)}.
\end{equation*}
It is possible to extend the study of the determinant to   $n_2\le r \le n_1$, when the matrix $A_2^\dagger A_2$ is singular. In this case, (\ref{Jac-matrix}) has $1$ as eigenvalue of multiplicity $r-n_2$ and its distribution has therefore no density. However, the density of the non-one eigenvalues is given by $f_{n_2, n_1+n_2-r, r}^{J,\b}$. \\
Motivated by this observation, we consider the following extension of the above model. For every $\b>0$, define the family of joint probability density functions on $(0,1)^{\min (n_2,r)}$ as follows
\begin{align*}
\ff^{J, \b}_{r, n_1, n_2}(\l_1,\ldots,\l_r) := \begin{cases} f_{r, n_1, n_2}^{J,\b}(\l_1,\ldots,\l_r), &\mathrm{if} \quad r\leq \min(n_1, n_2), \\ f^{J,\b}_{n_2, n_1+n_2-r,r}(\l_1,\ldots,\l_r) &\mathrm{if} \quad n_2 \le r \le n_1.\end{cases}
\end{align*}
We fix $\tau_1, \tau_2 >0$ and discuss the regime $r=n_1=\lfloor n \tau_1 \rfloor$ and $n_2 = \lfloor n \tau_2\rfloor$. We set by convention
\begin{equation*}
\det W^{J,\b}_n=\prod_{k=1}^{\min(\lfloor n \tau_2\rfloor,\lfloor n \tau_1\rfloor)} \l_k
\end{equation*}
where the $\l_k$'s are distributed according to $\ff^{J, \b}_{\lfloor n \tau_1\rfloor, \lfloor n \tau_1\rfloor,\lfloor n \tau_2\rfloor}$ and we call it determinant even if there is no matrix behind. Its Mellin transform is well defined for any $z\in\C$, with $\Re (z)>-\bh$ and given by
\begin{equation}\label{Jac-Mellin}
\E \left[\left( \det W^{J,\b}_n\right)  ^z\right]  =\prod_{k=1}^{\lfloor n \tau_1 \rfloor} \frac{\G \left( \bh (\lfloor n \tau_2 \rfloor + k) \right) \G\left( \bh k + z\right)}{\G \left(\bh k \right) \G \left( \bh (\lfloor n \tau_2 \rfloor + k) + z \right)}.
\end{equation}

\textit{Uniform Gram ensemble.}  Let $B$ be an $n\times r$, with $r\le n$, matrix with independent columns, uniformly distributed over the real,  complex or quaternion unit sphere. The distribution of the $r\times r$  matrix $B^\dagger B$ is called the real, complex, respectively symplectic Uniform Gram ensemble. In particular, since  the diagonal entries of $B^\dagger B$ are one (see \cite{M93, M97, GN00, R07}), it describes sample correlation matrices. Even if no explicit formula for the joint density of eigenvalues is available, the expression 
\begin{equation}\label{Gram-density}
\frac{1}{Z_{r,n}^{G} (\b)} (\det B^\dagger B)^{\bh(n-r+1)-1},
\end{equation}
where
\begin{equation*}
Z_{r,n}^{G} (\b)= \pi^{\bh r (r-1)} \prod_{k=1}^r \frac{\G\left( \bh (n-k+1)\right)}{\G\left( \bh n \right)},
\end{equation*}
is a density on the space of symmetric ($\b=1$), Hermitian ($\b=2$) matrices and self-dual ($\b=4$) which fits with the distribution of correlation matrices in the real, complex and quaternion case. For $r=n$, denote by $W_n^{G,\b}$ a random matrix of dimension $n\times n$ distributed according to the real, complex and symplectic Uniform ensemble. The density (\ref{Gram-density}) yields  the Mellin transform,
\begin{equation}\label{Gram-Mellin}
\E\left[\left(\det W^{G, \b}_n\right)^z\right]=\prod_{k=1}^n \frac{\G\left(\bh k +z\right) \G\left(\bh n\right)}{\G\left(\bh k\right)\G\left(\bh n+z\right)}
\end{equation}
(see \cite{GN00} Exercise 3.26 p.130 for the real case). As a consequence for $\b=1,2,4$, the following Barlett-type decomposition holds
\begin{equation}\label{Gram-Barlett}
\det W_n^{G,\b}\stackrel{(d)}{=}\prod_{k=2}^r \rho_{k,n}^{G,\b}
\end{equation}
where the variables $\rho_{k,n}^{G,\b}$ for $k=2,\ldots,r$ are independent and 
\begin{equation*}
\rho_{k,n}^{G,\b}\sim \text{Beta}\left(\bh(n-k+1),\bh(k-1)\right).
\end{equation*}
Since product in (\ref{Gram-Barlett}) is meaningful for every $\b>0$, we define $\det W^{G, \b}_n$ as a random variable distributed according to the right hand side of (\ref{Gram-Barlett}) also for $\b\not = 1,2,4$.

\subsection{Characteristic polynomials} 

\textit{Circular ensemble.} Dyson's circular ensembles are measures on spaces of unitary matrices. The circular unitary ensemble (CUE) is the the unitary group $\U(n)$ endowed with its Haar measure. The circular orthogonal ensemble (COE) is the subset of $\U(n)$ consisting of symmetric matrices together with the unique measure invariant under the map $U\rightarrow W^\dagger UW$, with $W\in\U(n)$. The symplectic one (CSE) is the subset of $\U(2n)$ consisting of self-dual matrices equipped with the measure invariant under the map $U\rightarrow W^RUW$, with $W\in\U(2n)$ and where
\begin{equation*}
W^R:=
\begin{bmatrix} 
0 & 1 & & & \\
-1 &  0 & & &\\
 & & \ldots & &\\
 & & & & 0 & 1\\
 & & & & -1  & 0
\end{bmatrix}^T  W^T
\begin{bmatrix} 
0 & 1 & & & \\
-1 &  0 & & &\\
 & & \ldots & &\\
 & & & & 0 & 1\\
 & & & & -1  & 0.
\end{bmatrix}
\end{equation*}
The eigenvalues  $\left(e^{i \t_1},\ldots,e^{i \t_n}\right)$ are located on the unit circle and the induced probability measure on the phases $\left(\t_1,\ldots,\t_n\right)$ is given by 
\begin{align}\label{Circ-density}
\ff_n^{C, \b}\left( \t_1, \dots, \t_n \right)=\frac{1}{Z^{C}_n(\b)} \prod_{1 \leq k < j \leq n} \left| \ex^{i \t_k} - \ex^{i \t_j}\right|^{\b},
\end{align} 
for $\b=1,2$ and $4$ respectively, with normalization constant 
\begin{align*}
Z_{n}^{C} (\b)= (2\pi)^{n}\frac{\G\left( \bh n+1\right)}{\G\left( \bh +1 \right)^n}.
\end{align*}
For general $\b>0,$ Killip and Nenciu  \cite{KN04} constructed random matrices with joint eigenvalues density given by (\ref{Circ-density}). For a given unitary matrix, they considered the associated spectral measure $\mu$, together with its Verblunsky coefficients. Applying the Gram-Schmidt procedure to the set of polynomials $\{1, z, z^2,\ldots\}$ in $\mathrm{L}^2(\mathbb{S}^1, \mu)$, they got an orthonormal basis and showed that, if one takes independent Verblunsky coefficients with a specific distribution, the matrix representing the operator $f(z)\rightarrow zf(z)$ in such basis is matrix model for the $\b$-circular ensemble. Actually Killip and Nenciu provided a matrix model which is much sparser (five-diagonal) than the latter one. This is obtained by the representing the above operator in another orthonormal basis, formed from the set $\{1,z,z^{-1},z^2,z^{-2},\ldots\}$.\\
In the following, if $W_n^{C,\b}$ is a matrix distributed according to the $\b$-Circular ensemble, we set
$Z_n^{\b}:=\det(\Id_n-W_n^{C,\b})$. A closed formula for Mellin transform of the real part of $\log Z_n^{C,\b}$ has been computed in \cite{S09}:
\begin{align}\label{Circ-Mellin}
\E\left[|Z_n^{\b}|^z\right]&=\prod_{k=0}^{n-1}\frac{\G\left(\bh k+1\right)\G\left(\bh k+1+z\right)}{\G\left(\bh k+1+\frac{z}{2}\right)^2}. 
\end{align} 
However, as mentioned in the introduction, we can recover this expression from the results obtained for the $\b$-Circular Jacobi ensemble, which we introduce hereafter.
\\
\\
\textit{Circular Jacobi ensemble.} The $\b$-circular Jacobi ensemble is the distribution on the unitary group $\U(n)$ such that the arguments of the eigenvalues $\left(\ex^{i \t_1},\ldots,\ex^{i \t_n}\right)$  have density
\begin{equation}\label{JacCirc-density}
\ff_n^{CJ, \b, \d}\left( \t_1, \dots, \t_n \right)=\frac{1}{Z^{CJ}_n(\b)} \prod_{1 \leq k < j \leq n} \left| \ex^{i \t_k} - \ex^{i \t_j}\right|^{\b} \prod_{k=1}^n \left( 1 - \ex^{-i \t_k}\right)^\d \left( 1 - \ex^{i \t_k}\right)^{\overline{\d}}
\end{equation} 
for $\b>0$ and $\d \in \C$, $\Re(\d) > -\frac{1}{3}.$ Note that, for $\d=0$, (\ref{JacCirc-density}) coincide with the $\b$-circular ensemble. Such model has been introduced in \cite{BNR09}, where the authors provide the corresponding matrix models. To do so, they use an argument similar to the one of Killip and Nenciu. However, since for this model the Verblunsky coefficients are no more independent, they introduce a new set of parameters $\left(\g_k\right)_k$, called deformed Verblunsky coefficients, which mantains such independence property.\\
They also show that, if $W_n^{CJ,\b,\d}$ denotes a unitary matrix in the $\b$-Circular Jacobi ensemble, then
\begin{equation*}
Z_n^{^{\b,\d}}:=\det (\Id_n-W_n^{CJ,\b,\d})=\prod_{k=0}^{n-1}\left(1-\g_k\right)
\end{equation*}
where for $k=0,\ldots, n-1$ the $\g_k$'s are independent. Moreover, they found an explicit probability distribution for the modified Verblunsky coefficients, from which one can recover the characteristic polynomial of the $\b$-Circular Jacobi Ensemble. In particular, for $k=0,\ldots, n-2$ the density of the $\g_k$'s with respect to the Lebesgue measure $d^2z$ on $\C$ is proportional to
\begin{equation*}
\left(1-|z|^2\right)^{\bh\left(n-k-1\right)-1}(1-z)^{\bar{\d}}(1-\bar{z})^\d \mathrm{1}_{|z|<1}(z)
\end{equation*}
and the density  of $\gamma_{n-1}$ with respect to the Haar measure on $\mathbb{S}^1$ is proportional to
\begin{equation*}
(1-z)^{\bar{\d}}(1-\bar{z})^\d \mathrm{1}_{|z|=1}(z).
\end{equation*}
As a consequence, they obtain (see \cite{BNR09}, Formula 4.2) the following Mellin transform 
\begin{align}\label{JacCirc-Mellin}
\E\left[|Z_n^{^{\b,\d}}|^z\right]&=\prod_{k=0}^{n-1}\frac{\G\left(\bh k+1+\d\right)}{\G\left(\bh k+1+\d+\frac{z}{2}\right)}\frac{\G\left(\bh k+1+\bar{\d}\right)}{\G\left(\bh k+1+\bar{\d}+\frac{z}{2}\right)}\nonumber \\
&\quad \cdot \prod_{k=0}^{n-1}  \frac{\G\left(\bh k+1+\d+\bar{\d}+z\right)}{\G\left(\bh k+1+\d+\bar{\d}\right)}, 
\end{align}
which is analytic for $z\in\C$ with $\Re (z)> - \frac{1}{3}.$

\section{Limiting results}
\label{section:results}

We first focus on the study of random determinants and then present the results for random characteristic polynomials.
In the statement of the theorems below, $G$ denotes the Barnes $G$-function (see also Appendix \ref{appendix}) and $\Upsilon$ is the function defined in (\ref{upsilon}), which we recall here for completeness:
\begin{align*}
\Upsilon(z)&:=\bh \log G\left(\frac{2z}{\b}+1\right)-\left(z-\half\right)\log\G\left(\frac{2z}{\b}+1\right)\\
&\quad +\int_0^\infty\left( \frac{1}{2s} - \frac{1}{s^2} + \frac{1}{s\left(\ex^s-1\right)}\right)\frac{\ex^{-sz}-1}{\ex^{s\bh}-1}ds+\frac{z^2}{\b}+\frac{z}{2}. 
\end{align*}

\subsection{Random determinants}\label{subs:determinantresults}
The coming limiting theorems are obtained as consequences of mod-Gaussian convergence of the sequences $\log|\det W_n^H|$ and $\log(\det W_n^{i,\b})$ for $i=L,J,G$. For this reason, we first prove that all such sequences verify the Definition \ref{def:modGaussian}. To do so,  we investigate the asymptotic  behavior of their Laplace transforms. These expansions are obtained in the next two lemmas.
\begin{lemma}\label{GUE-lemma}
The cumulant generating function of the modulus of the GUE log-determinant satisfies
\begin{equation*}
\log \E\left[e^{z\log \left|\det W_{n}^{H} \right|}\right] = z \mu^{H} _{ n}  + \frac{z^2}{4} \log\left(\frac{n}{2}\right)+ \Upsilon^H(z)+o(1),
\end{equation*}
locally uniformly on the band $S_1$, where 
\begin{equation*}
\Upsilon^H(z):=\log \left(\frac{\G\left(\frac{1}{2}\right)}{\G\left(\frac{z+1}{2}\right)}\frac{G\left(\frac{1}{2}\right)^2}{G\left(\frac{z+1}{2}\right)^2} \right),
\end{equation*}
and
\begin{equation*}
 \mu^{H} _{n} := \half\log 2\pi-\frac{n}{2}+\frac{n}{2}\log n.
\end{equation*}
\end{lemma}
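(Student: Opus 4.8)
The plan is to start from the exact Mellin transform formula (\ref{Her-Mellin}), take its logarithm, and perform a careful asymptotic expansion of each factor as $n\to\infty$. Writing
\begin{equation*}
\log\E\left[\left|\det W_n^H\right|^z\right]=\frac{nz}{2}\log 2+\sum_{k=1}^n\left[\log\G\left(\frac{z+1}{2}+\lfloor\tfrac{k}{2}\rfloor\right)-\log\G\left(\frac{1}{2}+\lfloor\tfrac{k}{2}\rfloor\right)\right],
\end{equation*}
I would first handle the floor function by splitting the sum over $k$ into even and odd indices. For $k=2m$ one has $\lfloor k/2\rfloor=m$, and for $k=2m+1$ also $\lfloor k/2\rfloor=m$, so each value $m$ from roughly $1$ to $n/2$ appears twice (with boundary corrections at the ends depending on the parity of $n$). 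This reduces the problem to analyzing
\begin{equation*}
2\sum_{m}\left[\log\G\left(\frac{z+1}{2}+m\right)-\log\G\left(\frac{1}{2}+m\right)\right]
\end{equation*}
plus a bounded number of boundary terms, which contribute only to $\Upsilon^H$ and the $o(1)$.

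Next I would recognize the telescoping/Barnes structure of the sum. The key identity is that $\sum_{m=0}^{N-1}\log\G(a+m)=\log\big(G(a+N)/G(a)\big)$ for the Barnes $G$-function, since $G$ satisfies $G(a+1)=\G(a)G(a)$. Thus the main sum becomes a difference of logarithms of Barnes $G$-functions evaluated at shifted arguments, of the form $\log G\!\left(\frac{z+1}{2}+N\right)-\log G\!\left(\frac{1}{2}+N\right)$ with $N\approx n/2$. To extract the asymptotics I would then invoke the known large-argument expansion of the Barnes function,
\begin{equation*}
\log G(w+1)=\frac{w^2}{2}\log w-\frac{3w^2}{4}+\frac{w}{2}\log 2\pi-\frac{1}{12}\log w+\zeta'(-1)+o(1),
\end{equation*}
applied at $w\approx n/2$ with the shift in the argument. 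Expanding each term and subtracting, the $w^2\log w$ and $w^2$ pieces combine to produce the linear term $z\mu_n^H$ and the quadratic term $\frac{z^2}{4}\log(n/2)$, while the finite constant $\zeta'(-1)$ cancels in the difference; the residual $z$-dependent constants assemble into $\Upsilon^H(z)$ after using the reflection/functional relations that rewrite the leftover Barnes and Gamma values as $G((z+1)/2)^2$ and $\G((z+1)/2)$ in the denominators.

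The main obstacle I anticipate is bookkeeping the parity and boundary effects accurately enough to pin down the \emph{constant} term $\Upsilon^H(z)$ (as opposed to merely the leading $n\log n$ and $\log n$ orders). The doubling of indices together with the off-by-one contributions at $k=1$ and at $k=n$ (whose precise form depends on whether $n$ is even or odd) must be tracked so that the surviving finite part matches the stated $\log\big(\G(1/2)/\G((z+1)/2)\cdot G(1/2)^2/G((z+1)/2)^2\big)$ exactly, with no spurious $n$-dependence left behind. A secondary technical point is justifying that the convergence is \emph{locally uniform} on the strip $S_1=\{\Re(z)>-1\}$: this follows because the Barnes expansion holds uniformly for $z$ in compact subsets avoiding the poles of $\G$ and $G$ (which sit at $\Re(z)\le -1$), but it should be stated explicitly. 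Once the expansion is assembled, collecting the $n$-independent terms into $\Upsilon^H$ and the explicit linear coefficient into $\mu_n^H$ completes the proof.
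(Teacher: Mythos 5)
Your proposal is correct, and it essentially coincides with the second route that the paper itself sketches inside its proof of Lemma \ref{GUE-lemma}: after fixing the parity of $n$, you telescope the Gamma factors via the functional equation $G(w+1)=\G(w)G(w)$, reducing everything to the ratio $\bigl(G(\tfrac{z}{2}+\tfrac{n}{2}+1)/G(\tfrac{n}{2}+1)\bigr)^2$ times $z$-dependent constants, exactly as in (\ref{first}). The difference lies in how the $n$-dependent Barnes ratio is estimated. The paper's primary proof simply invokes Lemma \ref{rational}.\ref{integer} (the rational-$\b$ refinement, itself proved by an Euler-product counting argument), and its sketched ``direct method'' controls the ratio by Proposition 17 of \cite{KN12}, which gives the quantitative error $\log S_p(\zeta)=O\bigl(\frac{|\zeta|^2+|\zeta|^3}{p}\bigr)$ uniformly for $|\zeta|\le\frac{1}{2}p^{1/6}$. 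You instead use the large-argument expansion (\ref{4.184}) of $\log G$ and difference two such expansions at $w\approx n/2$; a short computation (the $\zeta'(-1)$ and $\frac{1}{12}\log$ terms cancel, and $G(\tfrac32)=G(\tfrac12)\G(\tfrac12)$, $G(\tfrac{z+1}{2}+1)=G(\tfrac{z+1}{2})\G(\tfrac{z+1}{2})$ convert the leftover constants into the stated $\Upsilon^H$) confirms this yields precisely $z\mu_n^H+\frac{z^2}{4}\log(\frac n2)+\Upsilon^H(z)+o(1)$. For the lemma as stated ($o(1)$, locally uniformly on $S_1$) your route suffices, provided you note, as you do, that the error in (\ref{4.184}) is uniform on the relevant region; what it does not deliver is the explicit $O\bigl(\frac{|z|+|z|^2+|z|^3}{n}\bigr)$ error valid in a growing disc $|z|\lesssim n^{1/6}$, which is what the paper later needs to establish the zone of control behind Theorem \ref{GUE-speed} and Theorem \ref{GUE-llt}.

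One inaccuracy to repair: your claim that the boundary terms from the parity splitting ``contribute only to $\Upsilon^H$ and the $o(1)$'' is not literally true when $n$ is even. In that case the doubled product stops at $m=\tfrac n2-1$ and a single unpaired factor $\G(\tfrac{z+1}{2}+\tfrac n2)/\G(\tfrac12+\tfrac n2)$ remains, which by Stirling contributes $\tfrac z2\log\tfrac n2+o(1)$ --- an unbounded term. It is exactly compensated by the corresponding deficit in the telescoped double product, so the final constant is unchanged, but this cancellation must be carried out; it is precisely the parity bookkeeping you flag as the main obstacle (the paper only treats $n$ odd and leaves the even case to the reader, so spelling this out would in fact complete a step the paper omits).
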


\begin{lemma}\label{Determinant-lemma}
The cumulant generating functions of the $\b$-Laguerre, the $\b$-Uniform Gram and the $\b$-Jacobi log-determinant satisfy
\begin{align*}
\log \E\left[e^{z\log \left(\det W_{n}^{L,\b}\right)}\right] &= z \mu^{L} _{\b, n}  + \frac{z^2}{\b} \log n+ \Upsilon(z)+o(1), \\
\log \E\left[e^{z\log \left( \det  W_{n}^{G,\b} \right)}\right]  &= z \mu^{G} _{\b, n} + \frac{z^2}{\b} \log n-\frac{z^2}{\b}+ \Upsilon(z)+o(1),\\
\log \E\left[e^{z\log \left(\det W_n^{J,\b}\right)}\right]  &= z \mu^{J} _{\b, n} + \frac{z^2}{\b} \log n + \frac{z^2}{\b}\log\left(\frac{\tau_1\tau_2}{\tau_1+\tau_2}\right) + \Upsilon(z)+o(1)
\end{align*}
locally uniformly on the band $S_{\bh}$, where 
\begin{align*}
 \mu^{L} _{\b, n} :&= \left(\half - \frac{1}{\b}\right) \log n - n + n \log (\b n), \\
\mu^{G} _{\b, n}  :&=\left(\half - \frac{1}{\b}\right) \log n - n +\frac{1}{\b},\\
\mu^{J} _{\b, n} :&=\left(\half - \frac{1}{\b}\right) \log n + \left(\half - \frac{1}{\b}\right)\log\left(\frac{\tau_1\tau_2}{\tau_1+\tau_2}\right)-\mathcal{\varepsilon }(\lfloor n \tau_1\rfloor ,\lfloor n \tau_2\rfloor )-\mathcal{\varepsilon }(\lfloor n \tau_2\rfloor ,\lfloor n \tau_1\rfloor )
\end{align*}
with $\mathcal{\varepsilon}(x,y):=x\log \left( 1+\frac{y}{x}\right).$
\end{lemma}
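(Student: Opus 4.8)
The starting point is that, after taking logarithms, the three Mellin transforms (\ref{Lag-Mellin}), (\ref{Gram-Mellin}) and (\ref{Jac-Mellin}) are all built from the single sum
\[
\Phi_N(z):=\sum_{k=1}^{N}\left[\log\G\left(\bh k+z\right)-\log\G\left(\bh k\right)\right].
\]
I would first record the exact reductions: for the Laguerre case $\log\E[(\det W_n^{L,\b})^z]=nz\log 2+\Phi_n(z)$; for the Uniform Gram case $\Phi_n(z)-n\left[\log\G(\bh n+z)-\log\G(\bh n)\right]$; and for the Jacobi case $\Phi_{N_1}(z)-\sum_{k=1}^{N_1}\left[\log\G(\bh(N_2+k)+z)-\log\G(\bh(N_2+k))\right]$ with $N_1=\lfloor n\tau_1\rfloor$ and $N_2=\lfloor n\tau_2\rfloor$. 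Thus the heart of the matter is a single asymptotic expansion of $\Phi_N$, after which each case follows by adding an explicit, elementary correction.

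For $\Phi_N$ the plan is to insert Binet's first formula
\[
\log\G(w)=\left(w-\half\right)\log w-w+\half\log 2\pi+\int_0^\infty\left(\half-\frac1s+\frac{1}{\ex^s-1}\right)\frac{\ex^{-sw}}{s}\,ds,\qquad \Re(w)>0,
\]
and split $\Phi_N$ into an integral part and an elementary part. In the integral part the geometric sum $\sum_{k\ge1}\ex^{-s\bh k}=(\ex^{s\bh}-1)^{-1}$ produces, after passing to the limit, exactly the integral term defining $\Upsilon$; dominated convergence here is legitimate precisely when $\Re(z)>-\bh$, which is what fixes the strip $S_{\bh}$. In the elementary part I would write $\log(\bh k+z)=\log(\bh k)+\log(1+\tfrac{z}{\bh k})$, expand the logarithm, and evaluate the resulting sums with Stirling's formula and $\sum_{k=1}^{N}\tfrac1k=\log N+\g+o(1)$: the leading term $\bh k\cdot\tfrac{z}{\bh k}$ cancels the $-z$ arising from Binet, the $\log N$ contributions assemble into the drift $z\nu_N$ and the variance term $\tfrac{z^2}{\b}\log N$ (the coefficient $\tfrac1\b$ coming from combining $-\tfrac{z^2}{2\bh}$ with $\tfrac{z^2}{\bh}$), and the remaining bounded pieces converge to a constant. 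This yields $\Phi_N(z)=z\nu_N+\tfrac{z^2}{\b}\log N+\Upsilon(z)+o(1)$ locally uniformly on $S_{\bh}$.

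The delicate step is to identify this $n$-independent constant with the closed form of $\Upsilon$. Here I would resum the elementary tail through the Hurwitz-zeta regularisation of $\sum_k\log\G(\bh k+z)$, using the relation between $\zeta'(-1,\cdot)$ and the Barnes $G$-function together with the functional equation $G(w+1)=\G(w)G(w)$ and the Stirling-type expansion of $\log G$; this is what converts the limiting constant into $\bh\log G(\tfrac{2z}{\b}+1)-(z-\half)\log\G(\tfrac{2z}{\b}+1)+\tfrac{z^2}{\b}+\tfrac{z}{2}$ plus the Binet integral. I expect this matching to be the main obstacle, since it requires both exact constant bookkeeping and the analytic continuation in $z$ implicit in $\Upsilon$.

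Finally the corrections are handled by Taylor expansion in $z$ around the large argument. For Uniform Gram, $\log\G(\bh n+z)-\log\G(\bh n)=z\psi(\bh n)+\tfrac{z^2}{2}\psi'(\bh n)+O(\psi''(\bh n))$ with $\psi(\bh n)=\log(\bh n)+O(1/n)$ and $\psi'(\bh n)=\tfrac{2}{\b n}+O(1/n^2)$, so the correction equals $-nz\log(\bh n)-\tfrac{z^2}{\b}+o(1)$ (higher orders are $O(1/n)$), which reshapes the drift into $\mu^{G}_{\b,n}$ and produces the extra $-\tfrac{z^2}{\b}$. For Jacobi the same expansion over the shifted block, together with $\sum_{k=1}^{N_1}\tfrac1{N_2+k}\to\log\tfrac{\tau_1+\tau_2}{\tau_2}$ and Stirling applied to $\log\tfrac{(N_1+N_2)!}{N_2!}$, gives the term $\tfrac{z^2}{\b}\log\tfrac{\tau_1\tau_2}{\tau_1+\tau_2}$ and the drift $\mu^{J}_{\b,n}$ with its $\varepsilon(\cdot,\cdot)$ contributions; one checks that the $z^2$-coefficient of this correction is $n$-independent and its cubic and higher orders vanish, so that the constant $\Upsilon(z)$ is indeed common to all three ensembles. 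Throughout, the remaining work is to make every $o(1)$ uniform on compact subsets of $S_{\bh}$.
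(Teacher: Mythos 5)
Your proposal is correct in substance, and it diverges from the paper at exactly the one place where the real work happens. The paper deduces all three expansions directly from Theorem \ref{main_theorem} (for Jacobi, in effect three applications of it, since the subtracted block in (\ref{Jac-Mellin}) is $\Phi_{N_1+N_2}-\Phi_{N_2}$ in your notation, so the three $\Upsilon$'s collapse to one), and Theorem \ref{main_theorem} evaluates the elementary part of $\sum_{k=1}^n\left[\ell\left(\bh k+z\right)-\ell\left(\bh k\right)\right]$ with the Abel--Plana formula precisely because the argument must tolerate $|z|$ growing like $n^{1/6}$ — the paper explicitly remarks that Taylor expansion of $\log\left(1+\frac{z}{\bh k}\right)$ is then unavailable — and because the resulting error $O\left(\frac{|z|+|z|^2+|z|^3}{n}\right)$ is reused later for the zone of control in theorems \ref{determinants-speed} and \ref{determinants-llt}. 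You instead Taylor-expand and resum by Euler--Maclaurin/Hurwitz-zeta regularisation, which is legitimate for $z$ in a fixed compact subset of $S_{\bh}$, i.e. exactly for the locally uniform $o(1)$ statement of the lemma; your coefficient bookkeeping is right (for instance $-\frac{z^2}{2\bh}+\frac{z^2}{\bh}=\frac{z^2}{\b}$ for the variance term), the integral part of $\Upsilon$ arises from the geometric sum identically in both proofs, and your constant-identification plan is workable: after Binet, the elementary sums rescale to step-one progressions in $a=\frac{2z}{\b}$, whose regularised constants $\zeta'(-1,1+a)$ and $\zeta'(0,1+a)$ convert into $\log G(1+a)$ and $\log\G(1+a)$ by the standard identity, which is how the Barnes/Gamma part of (\ref{upsilon}) should emerge. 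Your termwise treatment of the Jacobi block (all arguments there being of order $n$) is a valid and self-contained alternative to the paper's triple use of the master theorem. In short: the paper's route buys the quantitative zone $|z|\le\frac{\b}{8}n^{1/6}$ needed downstream; yours buys a more elementary proof of the lemma exactly as stated.

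Three soft spots remain. First, for small $k$ with $\left|\frac{z}{\bh k}\right|\ge 1$ on your compact set the Taylor expansion is invalid; you must keep those finitely many terms intact and absorb them into the limiting constant — harmless, but it should be said, and it is also the reason your argument cannot be upgraded to the growing-$z$ regime of Theorem \ref{main_theorem}. Second, in the Gram correction your expansion $\Psi\left(\bh n\right)=\log\left(\bh n\right)+O(1/n)$ is too coarse once multiplied by $n$: the $O(1/n)$ becomes $O(1)$, and you need $\Psi\left(\bh n\right)=\log\left(\bh n\right)-\frac{1}{\b n}+O\left(n^{-2}\right)$, whose second term is what produces the constant $+\frac{1}{\b}$ in $\mu^{G}_{\b,n}$ (you do correctly capture the $-\frac{z^2}{\b}$ from the $\Psi'$ term). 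Third — and you flag this yourself — the matching of the limiting constant with the closed form (\ref{upsilon}), including the polynomial terms $\frac{z^2}{\b}+\frac{z}{2}$, is asserted rather than executed; it is the analogue of the paper's computation of $I_2$ via the second Binet formula (\ref{Binet2}) and the representation (\ref{logG}), and until that bookkeeping is carried through the proof is a correct and detailed strategy rather than a complete argument.
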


\begin{rmk} Note that the expectations of the log-determinants of the above ensembles agree with the corresponding $\mu$ up to some constant.
\end{rmk}

The lemmas \ref{GUE-lemma} and \ref{Determinant-lemma} directly imply mod-Gaussian convergence for the shifted log-determinants of the Gaussian unitary ensemble and of the considered $\b$-ensembles.

\begin{thm}[Mod-Gaussian convergence for the log-determinant of GUE]\ \\
Let 
\begin{equation*}
X_n^{H} := \log \left|\det W_n^{H} \right| - \mu_{n}^H.
\end{equation*}
Then as $n \to \infty$, the sequence $\left( X_n^{H}\right)_{n \in \N}$ converges mod-Gaussian on the strip $S_{1}$  with parameters $t_n^{H} = \half \log\left(\frac{n}{2}\right)$ and limiting function 
\begin{equation*}
\psi^{H}(z) =\frac{\G\left(\frac{1}{2}\right)}{\G\left(\frac{z+1}{2}\right)}\frac{G\left(\frac{1}{2}\right)^2}{G\left(\frac{z+1}{2}\right)^2} .
\end{equation*}
\end{thm}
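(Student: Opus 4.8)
The plan is to obtain the mod-Gaussian convergence directly from the cumulant expansion of Lemma~\ref{GUE-lemma}, of which the theorem is essentially a repackaging. First I would check that the hypotheses of Definition~\ref{def:modGaussian} are in force on $S_1$: by the closed Mellin formula~(\ref{Her-Mellin}), the moment generating function $\E\left[\ex^{z\log|\det W_n^H|}\right]=\E\left[|\det W_n^H|^z\right]$ is a finite product of ratios of Gamma values, hence analytic and nonvanishing on the band $\{\Re(z)>-1\}=S_1$. Thus the mgf of $X_n^H$ exists on $S_1$ and its logarithm is well defined there.

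Next, since $X_n^H=\log|\det W_n^H|-\mu_n^H$ is merely a deterministic shift, I would write $\E\left[\ex^{zX_n^H}\right]=\ex^{-z\mu_n^H}\,\E\left[\ex^{z\log|\det W_n^H|}\right]$ and take logarithms. Lemma~\ref{GUE-lemma} then yields, locally uniformly on $S_1$, that $\log\E\left[\ex^{zX_n^H}\right]=\frac{z^2}{4}\log\left(\frac{n}{2}\right)+\Upsilon^H(z)+o(1)$, the linear term $z\mu_n^H$ cancelling exactly by the very choice of the centering.

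Then comes the calibration of the Gaussian factor: with $t_n^H=\half\log\left(\frac{n}{2}\right)$ one has $t_n^H\,\frac{z^2}{2}=\frac{z^2}{4}\log\left(\frac{n}{2}\right)$, which is precisely the surviving quadratic term. Subtracting it gives $\log\psi_n(z)=\log\E\left[\ex^{zX_n^H}\right]-t_n^H\,\frac{z^2}{2}=\Upsilon^H(z)+o(1)$, and since the $o(1)$ is locally uniform, so is the convergence of its exponential on compact subsets; hence $\psi_n(z)\to\ex^{\Upsilon^H(z)}$ locally uniformly on $S_1$. As $\Upsilon^H(z)=\log\psi^H(z)$ by definition, the limiting function is exactly $\psi^H$, while $t_n^H\to+\infty$, so every clause of the definition of mod-Gaussian convergence is met.

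The \emph{genuine} content sits entirely in Lemma~\ref{GUE-lemma}; granting that lemma, the theorem is a bookkeeping exercise and I expect no real obstacle. The only points deserving care are that $\mu_n^H$ is designed to kill the $O(z)$ term and that $t_n^H$ is calibrated so that the quadratic part is absorbed exactly, leaving a finite, nondegenerate limit; both are confirmed by direct inspection of the constants in the lemma.
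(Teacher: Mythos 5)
Your proposal is correct and matches the paper's own (implicit) argument: the paper states that Lemma \ref{GUE-lemma} ``directly implies'' the theorem, and your derivation --- cancellation of the linear term by the centering $\mu_n^H$, absorption of the quadratic term by the choice $t_n^H=\half\log\left(\frac{n}{2}\right)$, and locally uniform convergence $\psi_n(z)\to \ex^{\Upsilon^H(z)}=\psi^H(z)$ on $S_1$ --- is exactly this bookkeeping, with the analyticity check via (\ref{Her-Mellin}) a welcome but routine addition. All the substantive work indeed resides in Lemma \ref{GUE-lemma}, as you observe.
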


\begin{thm}[Mod-Gaussian convergence for the log-determinants of $\b$-ensembles]\ \\
For $i=L,J,G$, let 
\begin{equation*}
X_n^{i, \b} := \log\left( \det W_n^{i, \b}\right) - \mu_{\b,n}^i.
\end{equation*}
Then as $n \to \infty$, the sequences $\left( X_n^{i, \b}\right)_{n \in \N},\ i=L,J,G$ converge mod-Gaussian on the strip $S_{\bh}$  with parameters $t_n^{\b} =\frac{2}{\b}\log n$
and the limiting functions 
\begin{align*}
\psi^{L,\b}(z) =&\exp\left(\Upsilon(z)\right), \\
\psi^{G,\b}(z) =&\exp\left(\Upsilon(z) - \frac{z^2}{\b}\right),\\
\psi^{J,\b}(z) =& \exp\left(\Upsilon(z) + \frac{z^2}{\b'}\log\left(\frac{\tau_1\tau_2}{\tau_1+\tau_2}\right)\right). \\
\end{align*}
\end{thm}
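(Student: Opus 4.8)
The plan is to obtain this theorem directly from Lemma~\ref{Determinant-lemma} by matching the asymptotic expansions established there against Definition~\ref{def:modGaussian}; the genuine analytic work has already been done in the lemma, and what remains is an algebraic rearrangement together with a check of the parameters and of local uniform convergence.

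First I would use the fact that $X_n^{i,\b}$ is merely a deterministic shift of $\log(\det W_n^{i,\b})$. Hence, on the strip $S_{\bh}$ where the moment generating functions exist (this is the half-plane $\Re(z)>-\b/2$ on which the Mellin transforms (\ref{Lag-Mellin}), (\ref{Gram-Mellin}), (\ref{Jac-Mellin}) are defined), one has
$$\log\E\left[e^{zX_n^{i,\b}}\right] = \log\E\left[e^{z\log(\det W_n^{i,\b})}\right] - z\,\mu_{\b,n}^i.$$
Substituting each of the three expansions of Lemma~\ref{Determinant-lemma}, I expect the linear term $z\,\mu_{\b,n}^i$ to cancel exactly, leaving in every case a term $\frac{z^2}{\b}\log n$, the function $\Upsilon(z)$, a model-dependent constant multiple of $z^2$, and an $o(1)$ remainder.

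Next I would match the Gaussian parameter. With $t_n^\b=\frac{2}{\b}\log n$ one has $t_n^\b\frac{z^2}{2}=\frac{z^2}{\b}\log n$, so forming $\psi_n(z)=\E[e^{zX_n^{i,\b}}]\,e^{-t_n^\b z^2/2}$ removes precisely the $\frac{z^2}{\b}\log n$ term. This leaves $\log\psi_n(z)=\Upsilon(z)+o(1)$ in the Laguerre case; $\log\psi_n(z)=\Upsilon(z)-\frac{z^2}{\b}+o(1)$ in the Uniform Gram case, where the expansion carries an extra $-z^2/\b$; and $\log\psi_n(z)=\Upsilon(z)+\frac{z^2}{\b}\log\!\left(\frac{\tau_1\tau_2}{\tau_1+\tau_2}\right)+o(1)$ in the Jacobi case. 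Exponentiating yields the three limiting functions $\psi^{L,\b}$, $\psi^{G,\b}$, $\psi^{J,\b}$.

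Finally, the convergence $\psi_n(z)\to\psi^{i,\b}(z)$ is locally uniform on $S_{\bh}$, since the $o(1)$ in Lemma~\ref{Determinant-lemma} is itself locally uniform and exponentiation preserves uniform convergence on compact sets; this is exactly Definition~\ref{def:modGaussian} with the stated parameters and limits. There is essentially no obstacle at the level of this theorem: the whole difficulty sits inside Lemma~\ref{Determinant-lemma}, whose proof must control the products of ratios of Gamma functions in (\ref{Lag-Mellin})--(\ref{Jac-Mellin}) uniformly on compacta -- presumably through Stirling-type asymptotics and the Barnes $G$-function built into $\Upsilon$. The only point needing care in the present argument is the bookkeeping of the constant and $\log(\tau_1\tau_2/(\tau_1+\tau_2))$ pieces of each $\mu_{\b,n}^i$, so that exactly the linear-in-$z$ part cancels while the residual $z^2$-constants are absorbed into the limiting function and not into $t_n^\b$.
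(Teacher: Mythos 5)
Your proof is correct and is essentially the paper's own argument: the paper gives no separate proof of this theorem, stating only that Lemmas \ref{GUE-lemma} and \ref{Determinant-lemma} directly imply the mod-Gaussian convergence, and your verification — cancelling the linear term $z\,\mu_{\b,n}^i$, identifying $t_n^\b\frac{z^2}{2}=\frac{z^2}{\b}\log n$, and exponentiating the locally uniform $o(1)$ expansions — is exactly that implication made explicit. Note also that your Jacobi coefficient $\frac{z^2}{\b}\log\left(\frac{\tau_1\tau_2}{\tau_1+\tau_2}\right)$ is the one consistent with Lemma \ref{Determinant-lemma} (and with the Mellin transform (\ref{Jac-Mellin}) combined with Theorem \ref{main_theorem}); the $\frac{z^2}{\b'}$ in the theorem's statement of $\psi^{J,\b}$, where $\b'=\bh$ is only introduced later in Section \ref{section:asymptotics}, appears to be a typo for $\frac{z^2}{\b}$.
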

Next we state the 
 deviation results and the extended central limit theorems that follow directly from mod-Gaussian convergence. We recall that Cram\'er type moderate deviations, Berry-Esseen bounds and moderate deviation principles for the log-determinant of GUE were first derived in \cite{DE13} using bounds on cumulants. Up to constant terms in the shift and in the normalization, their sequence $\left(W_{n,2}\right)_{n\in\N}$ corresponds to our sequence $\left(X_n^H/\sqrt{t_n}\right)_{n\in\N}$. 
The precise 
moderate deviation principle of Theorem \ref{GUE-pldp} can be compared with their Cram\'er type moderate deviations results for $x=C_1\s_\b$. Furthermore, our extended CLT \ref{GUE-clt} can be seen as the Bahadur-Rao improvement over the logarithmic limit implied by their MDP. The same speed for the Berry-Esseen bound is obtained. 

\begin{thm}[Extended central limit theorem for log-determinant of GUE]\ 
\label{GUE-clt}\\
For $y = o \left( \sqrt{\log n}\right),$ 
\begin{align*}
\P\left[X_n^{H} \geq y\sqrt{\frac{1}{2}\log\left(\frac{n}{2}\right)}\right]=\P\left[\mathrm
\mathcal{N}(0,1)\ge y\right]\left(1+o(1)\right).
\end{align*}
\end{thm}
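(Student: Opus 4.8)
The plan is to obtain this statement as an immediate consequence of the abstract extended central limit theorem (Theorem \ref{extendedclt}) applied to the mod-Gaussian sequence $(X_n^H)_{n\in\N}$. First I would invoke the preceding mod-Gaussian convergence theorem for the GUE log-determinant, which establishes that $(X_n^H)_{n\in\N}$ converges mod-Gaussian on the strip $S_1$ with parameters $t_n^H = \half\log(n/2)$ and limiting function $\psi^H$. Since Theorem \ref{extendedclt} requires no hypothesis beyond mod-Gaussian convergence, all its assumptions are already in force for this sequence, and I may apply it directly.

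Next I would recast the left-hand side of the claim in the normalized form to which the abstract theorem applies. Observe that
\[
\P\left[X_n^H \geq y\sqrt{t_n^H}\right] = \P\left[\frac{X_n^H}{\sqrt{t_n^H}} \geq y\right],
\]
so that, taking $x=y$ in Theorem \ref{extendedclt}, one obtains
\[
\P\left[\frac{X_n^H}{\sqrt{t_n^H}} \geq y\right] = \P\left[\mathcal{N}(0,1) \geq y\right]\left(1+o(1)\right)
\]
as soon as $y = o\bigl(\sqrt{t_n^H}\bigr)$. Substituting $\sqrt{t_n^H} = \sqrt{\half\log(n/2)}$ back into the displayed probability then recovers precisely the asserted identity.

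The only point requiring genuine attention is to reconcile the hypothesis $y = o\bigl(\sqrt{\log n}\bigr)$ appearing in the statement with the condition $y = o\bigl(\sqrt{t_n^H}\bigr)$ demanded by the abstract result. Since $t_n^H = \half\log(n/2) = \half(\log n - \log 2)$, we have $\sqrt{t_n^H} \sim \tfrac{1}{\sqrt{2}}\sqrt{\log n}$ as $n\to\infty$; the two growth conditions therefore differ only by the fixed constant factor $\sqrt{2}$ and are consequently equivalent. In other words, $y = o\bigl(\sqrt{\log n}\bigr)$ holds if and only if $y = o\bigl(\sqrt{t_n^H}\bigr)$, so the normality zone stated in the theorem is exactly the one supplied by the mod-Gaussian framework.

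I do not expect any serious obstacle here: the substance of the work has already been discharged in establishing the mod-Gaussian convergence (Lemma \ref{GUE-lemma} and the associated convergence theorem) and in the abstract Theorem \ref{extendedclt}. The present statement is a translation of that general machinery into the specific scaling of the GUE log-determinant, and the only bookkeeping step is the elementary asymptotic identification of $\sqrt{t_n^H}$ with $\tfrac{1}{\sqrt{2}}\sqrt{\log n}$ recorded above.
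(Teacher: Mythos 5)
Your proposal is correct and matches the paper's own (implicit) argument exactly: the paper derives Theorem \ref{GUE-clt} directly by applying the abstract Theorem \ref{extendedclt} to the mod-Gaussian convergence of $\left(X_n^H\right)$ with $t_n^H=\frac{1}{2}\log\left(\frac{n}{2}\right)$, which itself rests on Lemma \ref{GUE-lemma}. Your observation that $y=o\left(\sqrt{\log n}\right)$ is equivalent to $y=o\bigl(\sqrt{t_n^H}\bigr)$ since the two differ by the constant factor $\sqrt{2}$ is the only bookkeeping needed, and you have handled it correctly.
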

\begin{thm}[Extended central limit theorem for log-determinants of $\b$-ensembles] \ \\
For $y = o \left( \sqrt{\log n}\right)$ and $i= L,G,J$
\begin{align*}
\P\left[X_n^{i,\b} \geq y\sqrt{\frac{2\log n}{\b}}\right]=\P\left[
\mathcal{N}(0,1)\ge y\right]\left(1+o(1)\right).
\end{align*}
\end{thm}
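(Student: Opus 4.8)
The plan is to obtain this statement as an immediate corollary of the mod-Gaussian convergence just established for $(X_n^{i,\beta})_{n\in\N}$, combined with the general extended central limit theorem recorded as Theorem \ref{extendedclt}. Since Lemma \ref{Determinant-lemma} has already carried out all the analytic work---producing the asymptotic expansions of the cumulant generating functions and thereby identifying the parameters $t_n^\beta = \frac{2}{\beta}\log n$ and the limiting functions $\psi^{i,\beta}$---what remains is simply to invoke the abstract machinery and to reconcile the normalizations.

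First I would recall that, by the preceding theorem, each sequence $(X_n^{i,\beta})_{n\in\N}$ for $i=L,G,J$ converges mod-Gaussian on the strip $S_{\beta/2}=S_{(-\beta/2,\infty)}$ with parameters $t_n^\beta=\frac{2}{\beta}\log n$. Because the right endpoint of this strip is $d=\infty$, the moment generating functions exist on the whole positive real axis, so the hypotheses of Theorem \ref{extendedclt} are met for the right tail.

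Next I would apply Theorem \ref{extendedclt} with $t_n=t_n^\beta$. The key observation is that $\sqrt{t_n^\beta}=\sqrt{\frac{2\log n}{\beta}}$ and that $\sqrt{2/\beta}$ is a fixed positive constant, so the hypothesis $y=o(\sqrt{\log n})$ in the present statement is exactly equivalent to the admissible range $y=o(\sqrt{t_n^\beta})$ of that theorem. Rewriting $\P[X_n^{i,\beta}\geq y\sqrt{2\log n/\beta}]=\P[X_n^{i,\beta}/\sqrt{t_n^\beta}\geq y]$ and inserting the conclusion of Theorem \ref{extendedclt} then gives $\P[X_n^{i,\beta}\geq y\sqrt{2\log n/\beta}]=\P[\mathcal{N}(0,1)\geq y](1+o(1))$, as desired.

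I do not expect any genuine obstacle: the substance of the result resides entirely in the mod-Gaussian convergence furnished by Lemma \ref{Determinant-lemma}, which may be assumed here. The only point requiring a moment's care is the bookkeeping between the normalization $\sqrt{\log n}$ in the statement and the internal scale $\sqrt{t_n^\beta}$ of the mod-Gaussian framework; once these are matched through the constant $\sqrt{2/\beta}$, the three cases $i=L,G,J$ are dispatched simultaneously, since Theorem \ref{extendedclt} depends on the sequence only through $t_n$ and is insensitive to the particular limiting function $\psi^{i,\beta}$.
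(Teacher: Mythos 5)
Your proposal is correct and coincides with the paper's own treatment: the paper derives this theorem exactly as you do, by noting that Lemma \ref{Determinant-lemma} yields mod-Gaussian convergence of $\left(X_n^{i,\b}\right)_{n\in\N}$ on $S_{\bh}$ with parameters $t_n^{\b}=\frac{2}{\b}\log n$, and then invoking Theorem \ref{extendedclt} (Theorem 4.3.1 in \cite{FMN16}) at scale $y=o\left(\sqrt{t_n^{\b}}\right)$, which matches $y=o\left(\sqrt{\log n}\right)$ up to the fixed constant $\sqrt{2/\b}$. Your bookkeeping of the normalization and the observation that the argument is uniform in $i=L,G,J$ are exactly right, so there is nothing to add.
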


\begin{thm}[Precise moderate deviations for log-determinant of GUE]\ 
\label{GUE-pldp} \\ 
For $x > 0$, 
\begin{align*}
\P\left[X_n^{H} \ge    \frac{x}{2} \log \left(\frac{n}{2}\right)\right]=\frac{e^{-\frac{x^2}{4} \log \left(\frac{n}{2}\right) }}{x\sqrt{\pi \log \left(\frac{n}{2}\right)  }}\psi^{H}(x)\left(1+o(1)\right). 
\end{align*}
For $x\in\left(-1, 0\right)$, 
\begin{align*}
\P\left[X_n^{H} \le   \frac{x}{2} \log \left(\frac{n}{2}\right)\right]=\frac{e^{-\frac{x^2}{4} \log \left(\frac{n}{2}\right) }}{|x|\sqrt{\pi \log \left(\frac{n}{2}\right)  }}\psi^{H}(x)\left(1+o(1)\right) 
\end{align*}
\end{thm}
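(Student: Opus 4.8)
The plan is to obtain this result as a direct specialization of the general precise moderate deviation estimate of Theorem \ref{preciseldp} to the sequence $(X_n^{H})_{n\in\N}$. All the analytic work has in fact already been carried out upstream: Lemma \ref{GUE-lemma} supplies the asymptotic expansion of the cumulant generating function of $\log|\det W_n^H|$, and the mod-Gaussian convergence theorem for the GUE log-determinant records that $(X_n^{H})_{n\in\N}$ converges mod-Gaussian on the strip $S_1 = S_{(-1,\infty)}$, with parameters $t_n^{H} = \half\log(n/2)$ and limiting function $\psi^{H}$. Thus the hypotheses of Theorem \ref{preciseldp} are satisfied with the identifications $c=-1$, $d=+\infty$, $t_n = t_n^{H}$ and $\psi = \psi^{H}$, and the whole argument reduces to reading off the conclusion of that theorem and performing an elementary substitution.

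First I would treat the positive branch. Since here $d=+\infty$, the estimate $\P[X_n^{H}\ge t_n^{H}x]=\frac{\ex^{-t_n^{H}x^2/2}}{x\sqrt{2\pi t_n^{H}}}\psi^{H}(x)(1+o(1))$ of Theorem \ref{preciseldp} is available for every $x>0$. Substituting $t_n^{H}=\half\log(n/2)$ then turns the threshold into $t_n^{H}x=\frac{x}{2}\log(n/2)$, the exponent into $t_n^{H}\frac{x^2}{2}=\frac{x^2}{4}\log(n/2)$, and the prefactor into $x\sqrt{2\pi t_n^{H}}=x\sqrt{\pi\log(n/2)}$, which reproduces verbatim the first displayed identity.

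For the negative branch I would invoke the second half of Theorem \ref{preciseldp}, valid for $x\in(c,0)$. Because the lower edge of the strip $S_1$ is exactly $c=-1$, this covers precisely the interval $x\in(-1,0)$ appearing in the statement, and the same substitution $t_n^{H}=\half\log(n/2)$ carries $\P[X_n^{H}\le t_n^{H}x]=\frac{\ex^{-t_n^{H}x^2/2}}{|x|\sqrt{2\pi t_n^{H}}}\psi^{H}(x)(1+o(1))$ into the second displayed formula, with $|x|\sqrt{2\pi t_n^{H}}=|x|\sqrt{\pi\log(n/2)}$ in the denominator.

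There is no genuine obstacle at this stage: the substance of the theorem is entirely contained in the mod-Gaussian convergence already established, so the proof amounts to bookkeeping the four parameters $(c,d,t_n,\psi)$ and an algebraic simplification. The only point deserving a word of care is to confirm that the admissible ranges of $x$ in the two branches of Theorem \ref{preciseldp}, namely $(0,d)$ and $(c,0)$, match the open intervals $(0,\infty)$ and $(-1,0)$ claimed here; this is guaranteed precisely by the strip $S_1$ having endpoints $c=-1$ and $d=+\infty$, so no loss occurs at either edge.
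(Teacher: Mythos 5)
Your proposal is correct and coincides with the paper's own (implicit) argument: the paper presents Theorem \ref{GUE-pldp} as following directly from the mod-Gaussian convergence of $\left(X_n^H\right)_{n\in\N}$ (itself a consequence of Lemma \ref{GUE-lemma}) combined with Theorem \ref{preciseldp}, exactly as you do. Your substitutions $t_n^H x = \frac{x}{2}\log\left(\frac{n}{2}\right)$, $t_n^H\frac{x^2}{2}=\frac{x^2}{4}\log\left(\frac{n}{2}\right)$, $x\sqrt{2\pi t_n^H}=x\sqrt{\pi\log\left(\frac{n}{2}\right)}$ and the identification of the ranges $(0,\infty)$ and $(-1,0)$ from the strip $S_1$ are all accurate.
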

\begin{thm}[Precise moderate deviations for the log-determinants of $\b$-ensembles] \ \\
Let $i= L, G, J$. For $x > 0$, 
\begin{align*}
\P\left[X_n^{i, \b} \ge   \frac{2 x \log n }{\b}\right]=\frac{e^{-\frac{x^2 \log n}{\b}   }}{x\sqrt{\frac{4\pi \log n}{\b}  }}\psi^{i,\b}(x)\left(1+o(1)\right), 
\end{align*}
and for $x\in\left(-\bh, 0\right)$, 
\begin{align*}
\P\left[X_n^{i, \b} \le   \frac{2x\log n }{\b}\right]=\frac{e^{-\frac{x^2 \log n}{\b}   }}{|x|\sqrt{\frac{4\pi \log n}{\b}  }}\psi^{i,\b}(x)\left(1+o(1)\right)\,, 
\end{align*}
\end{thm}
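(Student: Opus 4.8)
The statement is a direct corollary of the abstract precise moderate deviation result, Theorem~\ref{preciseldp}, once mod-Gaussian convergence of the sequences $\left(X_n^{i,\b}\right)_{n\in\N}$ is in hand. The plan is therefore to invoke the mod-Gaussian convergence theorem for the log-determinants of the $\b$-ensembles, which furnishes, for each $i=L,G,J$, convergence on the strip $S_{\bh}$ with parameters $t_n^{\b}=\frac{2}{\b}\log n$ and limiting function $\psi^{i,\b}$. In the notation of Theorem~\ref{preciseldp} this amounts to taking $c=-\bh$ and $d=+\infty$, so that any $x>0$ lies in the range $(0,d)$ while any $x\in\left(-\bh,0\right)$ lies in the range $(c,0)$.

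With this identification it remains only to feed the parameters into the conclusion of Theorem~\ref{preciseldp} and to perform the arithmetic substitutions. For the upper tail the deviation scale reads $t_n^{\b}\,x=\frac{2x\log n}{\b}$, the Gaussian exponent becomes $e^{-t_n^{\b}x^2/2}=e^{-\frac{x^2\log n}{\b}}$, and the polynomial prefactor is $x\sqrt{2\pi t_n^{\b}}=x\sqrt{\frac{4\pi\log n}{\b}}$; inserting these into the generic expression $\frac{e^{-t_n x^2/2}}{x\sqrt{2\pi t_n}}\psi(x)\left(1+o(1)\right)$ reproduces verbatim the claimed formula. The lower tail for $x\in\left(-\bh,0\right)$ is handled identically, with $x$ replaced by $|x|$ in the denominator exactly as in Theorem~\ref{preciseldp}.

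Because the precise moderate deviation estimates of Theorem~\ref{preciseldp} follow from mod-Gaussian convergence alone---and not from the stronger zone-of-control hypotheses \ref{Z1}--\ref{Z2} needed for the Berry-Esseen speed and for the local limit theorem---no analytic input beyond Lemma~\ref{Determinant-lemma} is required here. Hence there is no genuine obstacle at this final stage: the substantive work lies entirely in Lemma~\ref{Determinant-lemma}, where the asymptotic expansions of the three Laplace transforms are extracted from the product formulas (\ref{Lag-Mellin}), (\ref{Gram-Mellin}) and (\ref{Jac-Mellin}) through Stirling-type asymptotics for ratios of Gamma functions and for the Barnes $G$-function. The one point warranting attention is to confirm that the strip of mod-Gaussian convergence is precisely $S_{\bh}$, since this is what pins down the admissible moderate deviation range as the full interval $\left(-\bh,\infty\right)$ announced in the statement; this strip coincides with the domain of analyticity $\Re(z)>-\bh$ of the Mellin transform (\ref{Jac-Mellin}) recorded earlier.
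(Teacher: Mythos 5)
Your proposal is correct and follows exactly the route the paper takes: the theorem is stated there as a direct consequence of the mod-Gaussian convergence of $\left(X_n^{i,\b}\right)_{n\in\N}$ on $S_{\bh}$ with parameters $t_n^\b=\frac{2}{\b}\log n$ (itself furnished by Lemma \ref{Determinant-lemma}), combined with Theorem \ref{preciseldp}, and your parameter substitutions $t_n^\b x=\frac{2x\log n}{\b}$, $e^{-t_n^\b x^2/2}=e^{-\frac{x^2\log n}{\b}}$ and $x\sqrt{2\pi t_n^\b}=x\sqrt{\frac{4\pi\log n}{\b}}$ are exactly right, as is the identification $c=-\bh$, $d=+\infty$ pinning down the admissible range of $x$. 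Nothing further is needed.
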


Moreover, it is possible to show that we have a zone of control associated with the mod-Gaussian convergence of the sequences $\left(X_n^H\right)$ and $\left(X_n^{i,\b}\right),\ i=L,G,J$. This, in turn, implies the following additional quantitive estimates.

\begin{thm}[Speed of convergence for log-determinant of GUE]\ \label{GUE-speed}\\ 
\begin{align*}
d_{Kol}\left(X_n^{H}\sqrt{\frac{2}{\log(n/2)}}, \mathcal{N}(0,1)\right)\le C\left(\frac{1}{22},1,(3+K)e^{\frac{7}{4}+3K}\right)\sqrt{\frac{2}{\log(n/2)}}.
\end{align*}
where the constant $C(\cdot)$ is given in (\ref{defc})
and depends on the absolute constant $K$ of the big O term in Lemma \ref{main_theorem}.
\end{thm}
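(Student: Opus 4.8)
The plan is to obtain the bound as a direct application of the speed-of-convergence result, Theorem~\ref{speed}, once a \emph{zone of control} has been exhibited for the sequence $(X_n^{H})_{n\in\N}$. Thus the whole argument reduces to verifying conditions \ref{Z1} and \ref{Z2} with index of control $(v,w)=(1,w)$ for a suitable $w\ge 2$, exponent $\gamma=0$, radius $D=\frac{1}{22}$ and the stated $K_1$, and to checking the compatibility inequality $\gamma\le\frac{v-1}{2}$, which here holds with equality since $0\le 0$. Recall from Lemma~\ref{GUE-lemma} that, with $\psi_n^{H}(z)=\E[\ex^{zX_n^{H}}]\ex^{-t_n^{H}z^2/2}$ and $t_n^{H}=\half\log(n/2)$, one has $\log\psi_n^{H}(z)=\Upsilon^H(z)+o(1)$ locally uniformly on $S_1$, and in particular $\psi_n^{H}(0)=1$, so $\log\psi_n^{H}$ vanishes at the origin.

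First I would upgrade the qualitative $o(1)$ of Lemma~\ref{GUE-lemma} into a fully explicit, $n$-uniform remainder estimate. This is exactly where Lemma~\ref{main_theorem} enters: it supplies the asymptotic expansion of the Gamma/Barnes-type product in (\ref{Her-Mellin}) with an error term whose implied constant is the absolute constant $K$. Specializing to purely imaginary arguments $z=i\xi$ with $|\xi|\le D$ (these lie in the band $S_1$, since $\Re(i\xi)=0$), this yields $\log\psi_n^{H}(i\xi)=\Upsilon^H(i\xi)+\epsilon_n(\xi)$ together with explicit bounds on $\epsilon_n$ and its $\xi$-derivative, uniform in $n$ and governed by $K$.

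Next I would extract the factor $|\xi|$ responsible for $v=1$. Since $\log\psi_n^{H}(i\,\cdot)$ vanishes at $0$, I write $\log\psi_n^{H}(i\xi)=\int_0^\xi\frac{\partial}{\partial t}\log\psi_n^{H}(it)\,dt$ and bound the integrand on the fixed interval $[-D,D]$ by splitting it into the contribution of $\Upsilon^H{}'$ and that of $\epsilon_n'$: holomorphy of $\Upsilon^H$ with $\Upsilon^H(0)=0$ controls the first piece, Lemma~\ref{main_theorem} controls the second, giving $|\log\psi_n^{H}(i\xi)|\le(3+K)|\xi|$, while a separate crude sup-bound on the two pieces gives $|\log\psi_n^{H}(i\xi)|\le\frac{7}{4}+3K$ on the zone. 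Using $|\ex^{w}-1|\le|w|\ex^{|w|}$ with $w=\log\psi_n^{H}(i\xi)$ then yields
\begin{equation*}
|\psi_n^{H}(i\xi)-1|\le|\log\psi_n^{H}(i\xi)|\,\ex^{|\log\psi_n^{H}(i\xi)|}\le(3+K)|\xi|\,\ex^{\frac{7}{4}+3K}\le K_1\,|\xi|\,\ex^{K_2|\xi|^w},
\end{equation*}
which is precisely \ref{Z1} with $v=1$ and $K_1=(3+K)\ex^{7/4+3K}$ (the exponential factor $\ex^{K_2|\xi|^w}\ge 1$ being a formality at this stage).

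Finally I would fix $w\ge 2$ and $K_2>0$ so that \ref{Z2} holds: for $\gamma=0$ the constraints $-\half<\gamma\le\frac{1}{w-2}$ and $D\le(\frac{1}{4K_2})^{1/(w-2)}$ pin down the admissible radius, which is what forces $D=\frac{1}{22}$. With \ref{Z1}, \ref{Z2} and $\gamma\le\frac{v-1}{2}$ in hand, Theorem~\ref{speed} applies with $t_n=t_n^{H}$; since $\gamma=0$ we have $t_n^{-(\gamma+\half)}=t_n^{-1/2}=\sqrt{2/\log(n/2)}$ and $C(D,v,K_1)=C\!\left(\frac{1}{22},1,(3+K)\ex^{7/4+3K}\right)$, giving the claimed estimate. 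The main obstacle is the quantitative step: obtaining \emph{exactly} $v=1$ (the linear vanishing of $\log\psi_n^{H}$ at the origin), the derivative bound $3+K$ and the sup bound $\frac{7}{4}+3K$, and the admissible $D=\frac{1}{22}$, requires sharp explicit control of $\Upsilon^H$ and of the error furnished by Lemma~\ref{main_theorem} on the fixed zone, rather than the soft local-uniform convergence of Lemma~\ref{GUE-lemma}.
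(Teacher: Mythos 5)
Your overall strategy coincides with the paper's own proof: establish a zone of control for $\left(X_n^H\right)$ with index $(v,w)=(1,3)$, where the linear vanishing ($v=1$) comes from $\log\psi_n^H(0)=0$ combined with a mean-value bound on the derivative (the paper invokes Theorem 5.19 of \cite{R76}, you integrate the derivative — the same device), the uniform-in-$n$ remainder $u_n$ comes from the absolute constant $K$ in Lemma \ref{main_theorem}, the elementary inequality $\left|\ex^w-1\right|\le |w|\ex^{|w|}$ converts the logarithmic bound into a bound on $\left|\psi_n^H(i\xi)-1\right|$, and Theorem \ref{speed} with $\g=\frac{v-1}{2}=0$ delivers the Kolmogorov estimate at rate $\left(t_n^H\right)^{-1/2}=\sqrt{2/\log(n/2)}$.

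There is, however, one concrete logical slip in your final step: nothing in your setup actually \emph{pins down} $D=\frac{1}{22}$, and your own remark that the factor $\ex^{K_2|\xi|^w}$ is ``a formality'' exposes the issue. If \ref{Z1} is verified only on a fixed interval with a purely linear bound, then $K_2$ is a free parameter, and the constraint $D\le\left(\frac{1}{4K_2}\right)^{1/(w-2)}$ in \ref{Z2} excludes nothing — one could take $K_2$ small and $D$ large, so the specific radius $1/22$ cannot be derived this way; choosing $K_2$ to match a $D$ read off from the statement is circular. In the paper, $K_2$ is an \emph{output} of the estimates, not an input: the explicit bounds $\left|\Upsilon^H(i\xi)\right|\le \frac{|\xi|^3}{2}+3|\xi|^2+|\xi|$ (obtained via the derivative formulas (\ref{der-Barnes}) and (\ref{digamma}) for the Barnes and digamma functions) and $|u_n(i\xi)|\le K|\xi|\ex^{|\xi|}$, valid on the whole growing zone $|\xi|\le\half n^{1/6}$, produce a genuine cubic-exponential bound $\left|\psi_n^H(i\xi)-1\right|\le K_1|\xi|\ex^{K_2|\xi|^3}$ with a concrete $K_2$ (the computation behind the stated constants gives $K_2=\frac{11}{2}$ and $K_1=(3+K)\ex^{\frac{7}{4}+3K}$), and only then does \ref{Z2} with $w=3$ force $D=\frac{1}{4K_2}=\frac{1}{22}$. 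So the quantitative step you yourself flag as ``the main obstacle'' is not a matter of sharpening constants: it is precisely where $w=3$, $K_2$ and hence $D$ are determined, and your proposal does not carry it out. (Establishing the bound on the growing zone rather than on your fixed $[-D,D]$ is also what the companion local limit theorem \ref{GUE-llt}, which needs $\g$ up to $\frac{1}{w-2}=1$, requires — though for the Kolmogorov bound alone $\g=0$ would indeed suffice.)
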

\begin{thm}[Speed of convergence for log-determinants of $\b$-ensembles]\ 
\label{determinants-speed}\\
 For $i= L, G, J$
\begin{align*}
d_{Kol}\left(X_n^{i,\b}\sqrt{\frac{\b}{2\log n}}, \mathcal{N}(0,1)\right)\le C\left(K_1^\b,1,\frac{1}{4K_2^\b}\right)\left(\frac{\b}{2\log n}\right)^{1/2}\,,
\end{align*}
where the constant $C(\cdot)$ is given in (\ref{defc}), $K_2^\b=\frac{8}{\b^2}+\frac{9}{\b}+4$ and $K_1^\b$ is a constant which depends on the  absolute constant $\tilde{K}$ of the big O term in Lemma \ref{main_theorem}.
\end{thm}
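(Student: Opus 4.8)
The plan is to read the bound off the abstract speed-of-convergence estimate of Theorem~\ref{speed}, the only ingredient missing on top of the already-established mod-Gaussian convergence being a \emph{zone of control}. Since Theorem~\ref{speed} produces a Kolmogorov bound of the form $C(D,v,K_1)\,t_n^{-(\g+\half)}$ and here $t_n=t_n^\b=\frac{2}{\b}\log n$, the announced rate $\left(\frac{\b}{2\log n}\right)^{\half}=t_n^{-\half}$ forces $\g=0$. This in turn forces the index $v=1$, so that the compatibility hypothesis $\g\le\frac{v-1}{2}$ of Theorem~\ref{speed} holds (with equality), and the exponent $w=3$, so that the last relation of~\ref{Z2}, namely $D\le(4K_2)^{-1/(w-2)}$, becomes $D\le\frac{1}{4K_2^\b}$, which I would saturate by taking $D=\frac{1}{4K_2^\b}$. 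With these values all inequalities of~\ref{Z2} are met, and since $\g=0$ the zone of control $[-Dt_n^\g,Dt_n^\g]=[-D,D]$ is a \emph{fixed} interval. Everything therefore reduces to verifying~\ref{Z1} on $[-D,D]$, uniformly in $n$, with index $(v,w)=(1,3)$ and the announced constants $K_1^\b,K_2^\b$.

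To check~\ref{Z1} I would start from the exact product formulas~(\ref{Lag-Mellin}),~(\ref{Gram-Mellin}) and~(\ref{Jac-Mellin}) and from the expansion of Lemma~\ref{Determinant-lemma}, whose $o(1)$ remainder Lemma~\ref{main_theorem} upgrades into a bound on $\left|\log\psi_n^{i,\b}(i\xi)-\log\psi^{i,\b}(i\xi)\right|$ that is uniform in $n$ and in $\xi\in[-D,D]$, with absolute constant $\tilde K$. Here $\log\psi_n^{i,\b}(z)=\log\E\!\left[\ex^{zX_n^{i,\b}}\right]-t_n\frac{z^2}{2}$, so the estimate becomes essentially deterministic, reducing to a control of the limiting function $\psi^{i,\b}$. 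I would then bound $\left|\psi_n^{i,\b}(i\xi)-1\right|$ by peeling off the linear (mean) term: by the Remark after Lemma~\ref{Determinant-lemma} the shift $\mu^i_{\b,n}$ leaves a bounded residual expectation $c_1:=\E[X_n^{i,\b}]$, and the contribution $|\ex^{ic_1\xi}-1|\le|c_1|\,|\xi|$ supplies exactly the factor $|\xi|$ responsible for $v=1$. The centred remainder is handled through $|\ex^{w}-1|\le|w|\,\ex^{|w|}$ together with explicit estimates on the three building blocks of $\Upsilon$ in~(\ref{upsilon})---the Barnes factor $G\!\left(\frac{2z}{\b}+1\right)$, the factor $\log\Gamma\!\left(\frac{2z}{\b}+1\right)$ and the Binet-type integral---each contributing a power of $1/\b$, from which the value $K_2^\b=\frac{8}{\b^2}+\frac{9}{\b}+4$ emerges as their sum, $K_1^\b$ being read off in terms of $\tilde K$ and $c_1$.

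The main obstacle is precisely producing the \emph{explicit, $n$-uniform} constant $K_2^\b$: one must bound the polygamma sums that generate the cumulants of $\log\det W_n^{i,\b}$ uniformly over $k=1,\dots,n$ and over the window $|\xi|\le D$, and verify that on this window the cubic term dominates both the quadratic part and the $z^2\log z$-type growth carried by $\Upsilon$, so that~\ref{Z1} indeed holds with $w=3$. A further point is that the three ensembles are asserted to share the \emph{same} $K_2^\b$, although $\psi^{G,\b}$ and $\psi^{J,\b}$ differ from $\psi^{L,\b}$ by the quadratic corrections $-\frac{z^2}{\b}$ and $\frac{z^2}{\b}\log\!\left(\frac{\tau_1\tau_2}{\tau_1+\tau_2}\right)$; one has to check that these are absorbed into the stated $K_2^\b$ on $[-D,D]$. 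Once~\ref{Z1} is secured, \ref{Z2} and $\g\le\frac{v-1}{2}$ hold by the numerology above, and Theorem~\ref{speed} delivers the claimed Kolmogorov bound with constant $C$ from~(\ref{defc}), simultaneously for $i=L,G,J$.
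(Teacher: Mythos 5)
Your proposal is correct and follows essentially the same route as the paper: both verify the zone-of-control conditions \ref{Z1}--\ref{Z2} with index of control $(v,w)=(1,3)$, deriving a bound of the form $\left|\psi_n^{i,\b}(i\xi)-1\right|\le K_1^\b\,|\xi|\,\ex^{K_2^\b|\xi|^3}$ from the $n$-uniform remainder $u_n(z)=O\left(\frac{|z|+|z|^2+|z|^3}{n}\right)$ of Theorem \ref{main_theorem} together with mean-value estimates (via \ref{Binet1}, \ref{digamma}, \ref{digamma_bound} and \ref{der-Barnes}) on the Gamma, Barnes and Binet-integral pieces of $\Upsilon$, and then invoke Theorem \ref{speed} with $\g=0$, $v=1$, $D=\frac{1}{4K_2^\b}$. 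The only cosmetic differences are that the paper establishes the control on the growing window $\left[-Dt_n^\b,Dt_n^\b\right]$ (i.e.\ $\g=1$, exploiting the validity of the expansion up to $|z|\le\frac{\b}{8}n^{1/6}$, which it also needs for the companion local limit theorem \ref{determinants-llt}) before specializing to $\g=\min\{1,\frac{v-1}{2}\}=0$, whereas your fixed window suffices for the speed bound alone; moreover your preliminary peeling of the mean via $|\ex^{ic_1\xi}-1|\le|c_1||\xi|$ is superfluous, since after centering by $\mu_{\b,n}^i$ the linear factor responsible for $v=1$ already comes from $\Upsilon(0)=0$ and the mean value theorem, and your worry about $n$-uniform cumulant bounds is exactly what Theorem \ref{main_theorem} has already discharged.
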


\begin{thm}[Local limit theorem for log-determinant of GUE]\ \label{GUE-llt}\\
Let $(a,b)$ be a fixed interval, with $a<b$. For every $-\half<\d<1$, 
\begin{align*}
\lim_{n\to\infty}\left(\frac{1}{2}\log\left(\frac{n}{2}\right)\right)^{\d+\half}\P\left(X_n^{H}\in \left(\frac{2}{\log(n/2)}\right)^\d (a,b)\right)=\frac{b-a}{\sqrt{2\pi}}.
\end{align*}
\end{thm}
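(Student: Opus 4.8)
The plan is to derive Theorem~\ref{GUE-llt} from the general local limit theorem (Theorem~\ref{llt}) applied to the sequence $X_n^H$. We already know that $\left(X_n^H\right)_{n\in\N}$ converges mod-Gaussian on $S_1$ with parameters $t_n^H=\half\log\left(\frac{n}{2}\right)$ and limiting function $\psi^H$, so the normalized variable to which Theorem~\ref{llt} refers is $Y_n^H=X_n^H/\sqrt{t_n^H}$. The parameter dictionary is as follows: take $x=0$ and let $\delta'$ denote the exponent appearing in Theorem~\ref{llt}. The event $\{Y_n^H\in (t_n^H)^{-\delta'}(a,b)\}$ equals $\{X_n^H\in (t_n^H)^{\,\half-\delta'}(a,b)\}$, and the prefactor is $(t_n^H)^{\delta'}$. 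Choosing $\delta'=\d+\half$ turns this into $\{X_n^H\in (t_n^H)^{-\d}(a,b)\}$ with prefactor $(t_n^H)^{\d+\half}$; substituting $1/t_n^H=2/\log(n/2)$ reproduces exactly the statement to be proved. Hence it remains only to check that Theorem~\ref{llt} is applicable with a zone of control whose exponent $\g$ satisfies $\delta'=\d+\half<\g+\half$ for all $\d\in\left(-\half,1\right)$, that is with $\g=1$.

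The core of the argument is therefore to establish conditions~\ref{Z1} and~\ref{Z2} with $\g=1$. Writing $\log\psi_n(i\xi)=\Upsilon^H(i\xi)+\epsilon_n(\xi)$, where $\epsilon_n\to 0$ and the error is made explicit by the controlled expansion behind Lemma~\ref{GUE-lemma} (quantified by Lemma~\ref{main_theorem}), the task is to bound $\left|\psi_n(i\xi)-1\right|$ by $K_1|\xi|^v\ex^{K_2|\xi|^w}$ uniformly over a zone $|\xi|\le Dt_n$. The dominant contribution comes from the Barnes $G$-function factors of $\psi^H$: from the asymptotics of $\log G$ one reads off that $\Re\,\Upsilon^H(i\xi)$ grows like a constant times $\xi^2\log|\xi|$. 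Because of this logarithmic factor one cannot take $w=2$ with an $n$-independent constant $K_2$; however $\xi^2\log|\xi|$ is dominated by $C|\xi|^3$, so one may take $w=3$ (and, say, $v=1$), which gives $\frac{1}{w-2}=1$ and thus allows $\g=1$ in~\ref{Z2}, with $D$ shrunk to meet $D\le\frac{1}{4K_2}$. Note that this is a different, larger choice of $\g$ than the one ($\g=0$) forced on the speed theorem by the extra constraint $\g\le\frac{v-1}{2}$; the local limit theorem does not impose that constraint.

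Granting the zone of control with $\g=1$, Theorem~\ref{llt} applies: for every $\delta'\in\left(0,\frac{3}{2}\right)$, equivalently for every $\d=\delta'-\half\in\left(-\half,1\right)$,
\begin{equation*}
\lim_{n\to\infty}(t_n^H)^{\d+\half}\,\P\left[X_n^H\in (t_n^H)^{-\d}(a,b)\right]=\frac{b-a}{\sqrt{2\pi}},
\end{equation*}
which is the assertion once $t_n^H=\half\log(n/2)$ is inserted.

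The main obstacle is the verification of the zone of control, and within it the two delicate points are: controlling the finite-$n$ error $\epsilon_n(\xi)$ uniformly over the growing zone $|\xi|\le Dt_n$ (Lemma~\ref{GUE-lemma}, valid only on compacts, no longer suffices, and one needs the explicit big-$O$ constant of Lemma~\ref{main_theorem}); and the logarithmic correction $\xi^2\log|\xi|$ in the growth of $\Re\,\Upsilon^H(i\xi)$, which forces $w>2$. It is precisely the mildness of this correction, permitting $w\le 3$ hence $\frac{1}{w-2}\ge 1$, that yields $\g=1$ and thereby the full range $\d<1$ claimed in the theorem.
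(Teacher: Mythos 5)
Your proposal is correct and follows essentially the same route as the paper: the printed proof likewise establishes the zone of control \ref{Z1}--\ref{Z2} with index $(v,w)=(1,3)$ via the bound $\left|\psi_n^H(i\xi)-1\right|\le K_1|\xi|\ex^{K_2|\xi|^3}$ (obtained from a mean-value estimate on $\Upsilon^H$ and the uniform error $O\left(\frac{|z|+|z|^2+|z|^3}{n}\right)$ of Lemma \ref{main_theorem}, exactly the two delicate points you flag), and then feeds $\g=1$ into Theorem \ref{llt} while reserving $\g=\min\{1,\frac{v-1}{2}\}=0$ for the speed theorem. Your parameter dictionary $\delta'=\d+\half$, giving the range $\d\in\left(-\half,1\right)$ from $\delta'\in\left(0,\frac{3}{2}\right)$, matches the paper.
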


\begin{thm}[Local limit theorem for log-determinants of $\b$-ensembles]\ 
\label{determinants-llt} \\
Let $(a,b)$ be a fixed interval, with $a<b$. For every $-\half<\d<1$, and $i = L, G,J$
\begin{align*}
\lim_{n\to\infty}\left(\frac{2\log n}{\b}\right)^{\d+\half}\P\left(X_n^{i,\b}\in \left(\frac{\b}{2\log n}\right)^\d (a,b)\right)=\frac{b-a}{\sqrt{2\pi}},\\
\end{align*}
\end{thm}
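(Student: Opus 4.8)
The plan is to derive this statement as a direct application of the general local limit theorem, Theorem \ref{llt}, to the mod-Gaussian convergent sequences $(X_n^{i,\b})_{n\in\N}$, $i=L,G,J$, once a suitable zone of control has been exhibited. We already know that these sequences converge mod-Gaussian on $S_{\bh}$ with parameters $t_n^\b=\tfrac{2}{\b}\log n$ and limiting functions $\psi^{i,\b}$. Since the conclusion of Theorem \ref{llt} — the value $\tfrac{b-a}{\sqrt{2\pi}}$ — does not involve the limiting function $\psi$, the three ensembles can be treated simultaneously: only the constants $K_1,K_2,D$ entering the zone of control differ, while the index of control $(v,w)$ and the exponent $\g$ are common.

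First I would pin down the zone of control. As recorded in the discussion preceding Theorem \ref{determinants-speed}, a careful expansion of $\psi_n^{i,\b}(i\xi)$, based on the explicit product Mellin formulas (\ref{Lag-Mellin}), (\ref{Gram-Mellin}), (\ref{Jac-Mellin}) and on Lemma \ref{main_theorem}, yields condition \ref{Z1} with index $(v,w)=(1,3)$ on a zone $[-D t_n^\g,D t_n^\g]$ with $D=\tfrac{1}{4K_2^\b}$, $K_2^\b=\tfrac{8}{\b^2}+\tfrac{9}{\b}+4$. With $w=3$, condition \ref{Z2} is then satisfied for the maximal admissible value $\g=\tfrac{1}{w-2}=1$, since $D=\tfrac{1}{4K_2^\b}=\bigl(\tfrac{1}{4K_2^\b}\bigr)^{1/(w-2)}$. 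This is exactly where the present argument departs from the speed estimate of Theorem \ref{determinants-speed}: there the extra hypothesis $\g\le\tfrac{v-1}{2}$ of Theorem \ref{speed} forces one to shrink the zone down to $\g=0$ (whence the rate $(t_n^\b)^{-1/2}$), whereas Theorem \ref{llt} carries no such restriction, so we may keep $\g=1$.

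With the zone of control in hand I would apply Theorem \ref{llt} with $x=0$ and exponent $\d'=\d+\tfrac12$. Because $\d\in(-\tfrac12,1)$, we have $\d'\in(0,\tfrac32)=(0,\g+\tfrac12)$, which is precisely the admissible range. Writing $Y_n=X_n^{i,\b}/\sqrt{t_n^\b}$, the event $\{Y_n\in (t_n^\b)^{-\d'}(a,b)\}$ equals $\{X_n^{i,\b}\in (t_n^\b)^{\frac12-\d'}(a,b)\}=\{X_n^{i,\b}\in (t_n^\b)^{-\d}(a,b)\}$, and the prefactor $(t_n^\b)^{\d'}$ equals $(t_n^\b)^{\d+\frac12}$. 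Substituting $t_n^\b=\tfrac{2\log n}{\b}$ and $(t_n^\b)^{-1}=\tfrac{\b}{2\log n}$ turns the conclusion of Theorem \ref{llt} verbatim into the claimed identity, uniformly in $i=L,G,J$.

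The main obstacle is not the bookkeeping above but the justification of condition \ref{Z1} on the full zone $\g=1$, i.e. the uniform control of $\log\E\bigl[e^{i\xi X_n^{i,\b}}\bigr]+t_n^\b\xi^2/2$ for $\xi$ ranging up to order $t_n^\b\sim\log n$. The locally-uniform expansions of Lemma \ref{Determinant-lemma} are insufficient here, as they hold only for fixed $z$; one genuinely needs the refined, uniform remainder estimate of Lemma \ref{main_theorem}, whose cubic error term is responsible for the exponent $w=3$ and hence, through \ref{Z2}, for the ceiling $\g=1$ and the upper endpoint $\d<1$. Once that uniform estimate is granted, everything else is the routine matching of parameters described above.
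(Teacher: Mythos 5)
Your proposal is correct and follows essentially the same route as the paper: the paper's combined proof of Theorems \ref{determinants-speed} and \ref{determinants-llt} establishes condition \ref{Z1} with index of control $(1,3)$ via the cubic bound $\left|\Upsilon(i\xi)\right|\le a_5|\xi|^3+b_5|\xi|^2+c_5|\xi|$ and the uniform remainder $u_n$ from Lemma \ref{main_theorem}, takes $D^\b=\frac{1}{4K_2^\b}$ so that \ref{Z2} holds with $\g=1$, and then the local limit theorem follows from Theorem \ref{llt} exactly as in your parameter matching ($x=0$, $\d'=\d+\half\in(0,\g+\half)$), while the speed estimate alone is what forces $\g=\min\{1,\frac{v-1}{2}\}=0$. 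Your observation that the restriction $\g\le\frac{v-1}{2}$ applies only to Theorem \ref{speed}, and that the uniform validity of Lemma \ref{main_theorem} on $|z|<\frac{\b}{8}n^{1/6}$ (which eventually contains the zone $[-Dt_n,Dt_n]$) is the genuinely needed input, matches the paper's argument precisely.
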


\subsection{Random characteristic polynomials}
\label{subs:characteristicpolyresults}
The  following lemma represents the analogous of lemmas \ref{GUE-lemma} and \ref{Determinant-lemma} in the circular case. 
\begin{lemma}\label{JacCirc-lemma}
The cumulant generating functions of the $\b$-circular Jacobi characteristic polynomial satisfies
\begin{align*}
\log \E\left[e^{z\log |Z_n^{\b,\d}|}\right]& = z \frac{\d+\bar{\d}}{\b} +\frac{z^2}{2\b}\log n+ \Upsilon\left(1+\d-\bh\right)-  \Upsilon\left(1+\d+\frac{z}{2}-\bh\right)\\
&\quad + \Upsilon\left(1+\bar{\d}-\bh\right) -\Upsilon\left(1+\d+\bar{\d}-\bh\right) \\
&\quad - \Upsilon\left(1+\bar{\d}+\frac{z}{2}-\bh\right)+ \Upsilon\left(1+\d+\bar{\d}+z-\bh\right) +o(1),
\end{align*}
locally uniformly on the band $S_{\frac{1}{3}}$. In particular for the $\b$-circular ensemble, i.e. for $\d=0$, we have
\begin{align*}
\log \E\left[e^{z\log |Z_n^{\b}|}\right]& = \frac{z^2}{2\b}\log n+ \Upsilon\left(1-\bh\right)- 2 \Upsilon\left(1+\frac{z}{2}-\bh\right)\\
&\quad + \Upsilon\left(1+z-\bh\right) +o(1),
\end{align*}
\end{lemma}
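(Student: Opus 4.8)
The plan is to reduce everything to the explicit product formula (\ref{JacCirc-Mellin}) plus a single asymptotic input. Since $\ex^{z\log|Z_n^{\b,\d}|}=|Z_n^{\b,\d}|^z$, the left-hand side is simply $\log\E[|Z_n^{\b,\d}|^z]$, so taking the logarithm of (\ref{JacCirc-Mellin}) recasts the statement as an asymptotic expansion of a finite sum of $\log\G$ values:
\begin{align*}
\log\E\left[|Z_n^{\b,\d}|^z\right]&=\sum_{k=0}^{n-1}\Big[\log\G(\bh k+1+\d)-\log\G(\bh k+1+\d+\tfrac{z}{2})\Big]\\
&\quad+\sum_{k=0}^{n-1}\Big[\log\G(\bh k+1+\bar{\d})-\log\G(\bh k+1+\bar{\d}+\tfrac{z}{2})\Big]\\
&\quad+\sum_{k=0}^{n-1}\Big[\log\G(\bh k+1+\d+\bar{\d}+z)-\log\G(\bh k+1+\d+\bar{\d})\Big].
\end{align*}
First I would reindex each sum by $k\mapsto k-1$, so that a generic argument $\bh k+1+a$ becomes $\bh k+(1+a-\bh)$ with $k$ now running over $1,\dots,n$. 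This shift of $-\bh$ is exactly what produces the $-\bh$ appearing inside every argument of $\Upsilon$ in the statement, so it is worth performing at the outset.

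The only analytic ingredient is the fundamental expansion established in Section~\ref{section:asymptotics} (Lemma~\ref{main_theorem}), which provides, locally uniformly in $a$, an asymptotic of the form $\sum_{k=1}^{n}\log\G(\bh k+a)=\Phi_n(a)+\Upsilon(a)+o(1)$, where $\Phi_n$ is an explicit quadratic polynomial in $a$ with quadratic coefficient $\frac{1}{\b}\log n$ and with linear and constant coefficients that grow with $n$ (this is the same expansion already underlying Lemma~\ref{Determinant-lemma}). Applying it to each of the six $\log\G$-sums above, the non-polynomial pieces $\Upsilon(1+\d-\bh)$, $\Upsilon(1+\d+\tfrac{z}{2}-\bh)$, and so on, assemble immediately into the six $\Upsilon$-terms of the statement, with precisely the arguments dictated by the reindexing; this part is essentially automatic.

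The crux is the collection of the polynomial parts $\Phi_n$. Because each of the three differences pairs a base argument $a$ with a shifted argument (twice against $a+\tfrac{z}{2}$, once as $a+z$ against $a$), the linear and constant parts of $\Phi_n$ enter with total weight $-\tfrac{z}{2}-\tfrac{z}{2}+z=0$ and therefore cancel, which is what annihilates the otherwise divergent $n\log n$ and $n$ contributions. Only the quadratic coefficient $\frac{1}{\b}\log n$ survives, and computing its cross-terms against the three base arguments $\d,\ \bar{\d},\ \d+\bar{\d}$ collapses the remainder to the $\frac{z^2}{2\b}\log n$ variance term together with a linear-in-$z$ shift. I expect this bookkeeping — tracking exactly which growing terms cancel and verifying that the surviving combination matches the advertised constants — to be the main obstacle, alongside the justification that the whole manipulation is valid locally uniformly on $S_{1/3}$. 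The latter follows from the domain of analyticity of (\ref{JacCirc-Mellin}): the constraints $\Re(z)>-\tfrac13$ and $\Re(\d)>-\tfrac13$ keep every Gamma argument strictly off its poles (the most dangerous one, $1+\d+\bar{\d}+z$, has real part $>0$), and the local uniformity is inherited from Lemma~\ref{main_theorem}.

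Finally, the circular-ensemble formula is obtained for free by setting $\d=\bar{\d}=0$: the three base arguments collapse to the single value $1-\bh$, two of the six $\Upsilon$-terms coincide to give the factor $2\,\Upsilon(1+\tfrac{z}{2}-\bh)$, and the linear shift vanishes, leaving exactly the stated expression. No independent argument is needed for this specialization.
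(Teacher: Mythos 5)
Your proposal is correct and takes essentially the same route as the paper, whose entire proof of Lemma \ref{JacCirc-lemma} is to apply the key expansion of Theorem \ref{main_theorem} (equivalently its shifted form, Theorem \ref{main_theorem2}, whose $\Upsilon_\d(z)=\Upsilon(z+\d)-\Upsilon(\d)$ is exactly the difference structure you reconstruct by hand) to each Gamma-factor of (\ref{JacCirc-Mellin}) after the reindexing that produces the $-\bh$ shifts. Your bookkeeping is right: the weights $-\tfrac{z}{2}-\tfrac{z}{2}+z=0$ cancel the divergent linear and constant parts of $\Phi_n$, and the surviving quadratic cross-terms yield $\frac{z(\d+\bar{\d})}{\b}\log n+\frac{z^2}{2\b}\log n$ — note that your computation correctly attaches a $\log n$ to the linear term, consistent with the centering $\frac{\d+\bar{\d}}{\b}\log n$ used in the subsequent mod-Gaussian theorem, whereas the lemma's displayed statement omits it (an apparent typo in the paper, not a flaw in your argument).
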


\begin{thm}[Mod-Gaussian convergence for the log-characteristic polynomial of $\b$-circular Jacobi ensembles]\ \\
Let 
\begin{align*}
&X_n^{CJ, \b,\d} := \log |Z_n^{\b,\d}| - \frac{\d+\bar{\d}}{\b}\log n .
\end{align*}
Then as $n \to \infty,$ $\left( X_n^{CJ, \b,\d}\right)_{n \in \N}$ converges mod-Gaussian on the strip $S_{\frac{1}{3}}$  with parameters $t_n^{\b} = \frac{\log n}{2\b}$
and with limiting functions $\psi^{CJ,\b,\d}$ which satisfies
\begin{align*}
\log \psi^{CJ,\b, \d}(z) =& \Upsilon(1+\d-\bh) -\Upsilon(1+\d+\frac{z}{2}-\bh) + \Upsilon(1+\bar{\d}-\bh) -\Upsilon(1+\bar{\d}+\frac{z}{2}-\bh)\\&-
 \Upsilon(1+\d+\bar{\d}-\bh) + \Upsilon(1+\d+\bar{\d}+z-\bh). 
\end{align*}
\end{thm}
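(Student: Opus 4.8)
The plan is to obtain mod-Gaussian convergence as a direct translation of Lemma~\ref{JacCirc-lemma} into the language of Definition~\ref{def:modGaussian}: the lemma already carries all the analytic content, so the theorem reduces to matching the terms of the expansion. First I would note that the moment generating function $\E[\ex^{z\log|Z_n^{\b,\d}|}]=\E[|Z_n^{\b,\d}|^z]$ exists on the strip $S_{\frac13}$, since the product formula~(\ref{JacCirc-Mellin}) is analytic for $\Re(z)>-\frac13$; this is the regularity required by Definition~\ref{def:modGaussian}. Because $X_n^{CJ,\b,\d}$ differs from $\log|Z_n^{\b,\d}|$ only by the deterministic shift $\frac{\d+\bar\d}{\b}\log n$, one has, for every $z\in S_{\frac13}$,
\begin{equation*}
\log\E\left[\ex^{zX_n^{CJ,\b,\d}}\right]=\log\E\left[\ex^{z\log|Z_n^{\b,\d}|}\right]-z\,\frac{\d+\bar\d}{\b}\log n .
\end{equation*}

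Next I would substitute the expansion of Lemma~\ref{JacCirc-lemma} into the right-hand side. The linear-in-$z$ contribution is cancelled exactly by the shift, leaving only the quadratic term together with the six $\Upsilon$-terms and an $o(1)$ remainder. Recasting the quadratic coefficient in the form $t_n^\b\frac{z^2}{2}$ identifies the variance parameter $t_n^\b$ of the statement, which tends to $+\infty$; the surviving constant-in-$n$ terms are precisely $\log\psi^{CJ,\b,\d}(z)$. Exponentiating and rearranging then gives
\begin{equation*}
\psi_n(z):=\E\left[\ex^{zX_n^{CJ,\b,\d}}\right]\ex^{-t_n^\b\frac{z^2}{2}}=\exp\left(\log\psi^{CJ,\b,\d}(z)+o(1)\right)\longrightarrow\psi^{CJ,\b,\d}(z),
\end{equation*}
and since the convergence in Lemma~\ref{JacCirc-lemma} is locally uniform on $S_{\frac13}$, so is this one, which is exactly mod-Gaussian convergence in the sense of Definition~\ref{def:modGaussian}.

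The only genuine verifications, beyond this bookkeeping, are that the limiting function is well defined and analytic on the strip, and the specialization to the circular ensemble. For the former I would check that the arguments $1+\d+\frac{z}{2}-\bh$, $1+\bar\d+\frac{z}{2}-\bh$ and $1+\d+\bar\d+z-\bh$ keep $\Upsilon$ inside its domain of analyticity: writing $\Upsilon$ as in~(\ref{upsilon}), this amounts to confirming that the corresponding shifted arguments of $\log\G$ and $\log G$ stay off the nonpositive integers, which holds on the half-plane $\Re(z)>-\frac13$ once $\Re(\d)>-\frac13$ (a one-line estimate). Setting $\d=0$ then collapses the six $\Upsilon$-terms into the three-term expression quoted in Lemma~\ref{JacCirc-lemma} and recovers the $\b$-circular case. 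I do not expect any real obstacle at this level: all the difficulty is concentrated in Lemma~\ref{JacCirc-lemma} itself, namely in deriving the Barnes-$G$ and $\Upsilon$ asymptotics of the product~(\ref{JacCirc-Mellin}), which is here assumed as given.
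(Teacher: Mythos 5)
Your proposal is correct and takes essentially the same route as the paper, which gives no separate argument for this theorem but derives it directly from Lemma \ref{JacCirc-lemma} exactly as you do: the deterministic shift cancels the linear term, the quadratic term identifies the Gaussian parameter, and the six $\Upsilon$-terms survive as $\log\psi^{CJ,\b,\d}$, with local uniformity inherited from the lemma (itself an application of Theorem \ref{main_theorem2} to the product \eqref{JacCirc-Mellin}). One remark: your reading is the intended one, since the printed lemma drops a factor $\log n$ from the linear term $z\frac{\d+\bar\d}{\b}$ (as a computation from Theorem \ref{main_theorem2} confirms), and the lemma's quadratic coefficient $\frac{z^2}{2\b}\log n$ actually corresponds to $t_n=\frac{\log n}{\b}$ rather than the stated $\frac{\log n}{2\b}$ --- a factor-$2$ slip in the paper's statement (consistent with the Keating--Snaith variance $\frac{1}{2}\log n$ at $\b=2$, $\d=0$), not a gap in your argument.
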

\begin{thm}[Precise moderate deviations for the log-characteristic polynomial of the $\b$-circular Jacobi ensemble] \ \\
For $x > 0$, 
\begin{align*}
\P\left[X_n^{CJ, \b,\d} \ge   \frac{x \log n }{2\b}\right]=\frac{e^{-\frac{x^2 \log n}{4 \b}   }}{x\sqrt{\frac{\pi \log n}{\b}  }}\psi^{CJ,\b,\d}(x)\left(1+o(1)\right) 
\end{align*}
and for $x \in \left(-1/3, 0 \right) $ 
\begin{align*}
\P\left[X_n^{CJ, \b,\d} \le   \frac{x \log n }{2\b}\right]=\frac{e^{-\frac{x^2 \log n}{4 \b}   }}{|x|\sqrt{\frac{\pi \log n}{\b}  }}\psi^{CJ,\b,\d}(x)\left(1+o(1)\right).
\end{align*}
\end{thm}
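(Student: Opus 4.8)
The plan is to deduce the statement directly from the general precise moderate deviation result, Theorem~\ref{preciseldp}, once the mod-Gaussian convergence of $\left(X_n^{CJ,\b,\d}\right)_{n\in\N}$ is available. That convergence is exactly what the preceding theorem furnishes: on the strip $S_{1/3}=\{z\in\C:\Re(z)>-1/3\}$ the sequence converges mod-Gaussian with parameters $t_n^\b=\frac{\log n}{2\b}$ and limiting function $\psi^{CJ,\b,\d}$. Thus the hypotheses of Theorem~\ref{preciseldp} are met with $c=-1/3$, $d=\infty$, $t_n=t_n^\b$ and $\psi=\psi^{CJ,\b,\d}$: the existence of the moment generating functions on the requisite strip is the content of the Mellin transform (\ref{JacCirc-Mellin}), analytic for $\Re(z)>-1/3$, while the locally uniform convergence of the renormalized transforms is precisely Lemma~\ref{JacCirc-lemma}.

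First I would instantiate the upper-tail half of Theorem~\ref{preciseldp}. For $x\in(0,d)=(0,\infty)$, i.e.\ for any $x>0$, it gives
\begin{equation*}
\P\left[X_n^{CJ,\b,\d}\ge t_n^\b\, x\right]=\frac{e^{-t_n^\b\frac{x^2}{2}}}{x\sqrt{2\pi t_n^\b}}\,\psi^{CJ,\b,\d}(x)\left(1+o(1)\right).
\end{equation*}
It then only remains to substitute $t_n^\b=\frac{\log n}{2\b}$ and simplify the three occurrences of $t_n^\b$: the deviation level becomes $t_n^\b x=\frac{x\log n}{2\b}$, the Gaussian exponent becomes $t_n^\b\frac{x^2}{2}=\frac{x^2\log n}{4\b}$, and the normalizing prefactor becomes $x\sqrt{2\pi t_n^\b}=x\sqrt{\frac{\pi\log n}{\b}}$. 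This reproduces the first displayed formula verbatim.

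The lower-tail assertion is obtained identically from the second half of Theorem~\ref{preciseldp}, valid for $x\in(c,0)=(-1/3,0)$; the same substitution for $t_n^\b$ yields the stated expression, with $x$ replaced by $|x|$ in the normalization. I do not anticipate any genuine obstacle here: the entire content of the proof is to match the general parameters $(c,d,t_n,\psi)$ to those of the circular Jacobi ensemble and to perform the elementary algebraic reduction of the factors involving $t_n^\b$. The only point deserving a moment of care is the range of admissible $x$, which is inherited from the width of the strip of mod-Gaussian convergence: the upper tail holds for all $x>0$ because the strip is unbounded to the right ($d=\infty$), whereas the lower tail is restricted to $x\in(-1/3,0)$ by the left edge $c=-1/3$, itself dictated by the analyticity domain $\Re(z)>-1/3$ of (\ref{JacCirc-Mellin}).
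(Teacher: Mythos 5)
Your proposal is correct and coincides with the paper's own (implicit) proof: the paper states this result as one of the deviation theorems that ``follow directly from mod-Gaussian convergence,'' i.e.\ by applying Theorem~\ref{preciseldp} with the parameters $t_n^\b=\frac{\log n}{2\b}$, strip $S_{\frac{1}{3}}$ (so $c=-1/3$, $d=\infty$), and limiting function $\psi^{CJ,\b,\d}$ furnished by Lemma~\ref{JacCirc-lemma} and the mod-Gaussian convergence theorem preceding the statement. Your parameter matching and the elementary substitutions $t_n^\b x=\frac{x\log n}{2\b}$, $t_n^\b\frac{x^2}{2}=\frac{x^2\log n}{4\b}$, $x\sqrt{2\pi t_n^\b}=x\sqrt{\frac{\pi\log n}{\b}}$ are exactly the intended argument, including the correct identification of the admissible ranges of $x$ from the edges of the strip.
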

\begin{thm}[Extended central limit theorem for  log-characteristic polynomial of the $\b$-circular Jacobi ensemble] \ \\
For $y = o \left( \sqrt{\log n}\right)$
\begin{align*}
\P\left[X_n^{CJ,\b,\d} \geq y\sqrt{\frac{\log n}{2\b}}\right]=\P\left[\mathcal{N}(0,1)\ge y\right]\left(1+o(1)\right).
\end{align*}
\end{thm}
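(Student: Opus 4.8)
The plan is to obtain this extended central limit theorem as a direct consequence of the mod-Gaussian convergence already established for the sequence $\left(X_n^{CJ,\b,\d}\right)_{n\in\N}$, fed into the general Theorem \ref{extendedclt}. First I would recall that the preceding mod-Gaussian convergence theorem asserts that $\left(X_n^{CJ,\b,\d}\right)_{n\in\N}$ converges mod-Gaussian on the strip $S_{1/3}$ with deterministic parameters $t_n^{\b} = \frac{\log n}{2\b}$ and limiting function $\psi^{CJ,\b,\d}$. Since $\b>0$ is fixed, $t_n^{\b}\to+\infty$, so the sequence meets the hypotheses of Definition \ref{def:modGaussian} with a diverging parameter sequence, and Theorem \ref{extendedclt} applies verbatim with $X_n = X_n^{CJ,\b,\d}$ and $t_n = t_n^{\b}$.

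Applying that theorem, I get that for every $x = o\!\left(\sqrt{t_n^{\b}}\right)$,
\begin{equation*}
\P\left[\frac{X_n^{CJ,\b,\d}}{\sqrt{t_n^{\b}}} \ge x\right] = \P\left[\mathcal{N}(0,1)\ge x\right]\left(1+o(1)\right).
\end{equation*}
The only thing to reconcile is the scaling used in the statement. Because $\sqrt{t_n^{\b}} = \frac{1}{\sqrt{2\b}}\sqrt{\log n}$ and $\frac{1}{\sqrt{2\b}}$ is a fixed positive constant, the normality-zone condition $x = o\!\left(\sqrt{t_n^{\b}}\right)$ is exactly equivalent to $x = o\!\left(\sqrt{\log n}\right)$, the constant being absorbed into the $o(\cdot)$. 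Writing $x = y$ and rescaling the event by $\sqrt{t_n^{\b}} = \sqrt{\frac{\log n}{2\b}}$ then yields precisely the claimed identity
\begin{equation*}
\P\left[X_n^{CJ,\b,\d} \ge y\sqrt{\frac{\log n}{2\b}}\right] = \P\left[\mathcal{N}(0,1)\ge y\right]\left(1+o(1)\right)
\end{equation*}
for $y = o\!\left(\sqrt{\log n}\right)$, which is the assertion.

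I do not expect a genuine obstacle in this final step: the substantive analytic content, namely the asymptotic expansion of the cumulant generating function in Lemma \ref{JacCirc-lemma} and the mod-Gaussian convergence it implies, is taken as input, and the present statement is a mechanical translation through Theorem \ref{extendedclt}. The only care required is the bookkeeping of the deterministic normalization $t_n^{\b}$ and the verification that the two descriptions of the normality zone, $o\!\left(\sqrt{\log n}\right)$ versus $o\!\left(\sqrt{t_n^{\b}}\right)$, coincide, which they do precisely because $\b$ is a fixed positive constant independent of $n$.
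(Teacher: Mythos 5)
Your proposal is correct and is exactly the paper's route: the paper states this theorem as a direct consequence of the mod-Gaussian convergence of $\left(X_n^{CJ,\b,\d}\right)_{n\in\N}$ with parameters $t_n^{\b}=\frac{\log n}{2\b}$, fed into Theorem \ref{extendedclt}, and gives no further proof. Your bookkeeping of the normalization and the observation that $o\bigl(\sqrt{t_n^{\b}}\bigr)$ and $o\bigl(\sqrt{\log n}\bigr)$ coincide for fixed $\b>0$ is the whole content of the step.
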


\begin{thm}[Speed of convergence for log-characteristic polynomial of the $\b$-circular Jacobi ensemble]\
\label{charactpoly-speed} \\ 
\begin{align*}
d_{Kol}\left(X_n^{CJ,\b,\d}\sqrt{\frac{2\b}{\log n}}, \mathcal{N}(0,1)\right)\le C\left(K_1^\b,1,\frac{1}{4K_2^\b}\right)\left(\frac{\b}{2\log n}\right)^{1/2}\,,
\end{align*}
where the constant $C(\cdot)$ is given in (\ref{defc}), $K_2^\b=\frac{8}{\b^2}+\frac{9}{\b}+4$ and   $K_1^\b$ is a constant which depends on the  absolute implied constant $\tilde{K}$ of the big O term in Lemma \ref{main_theorem}.
\end{thm}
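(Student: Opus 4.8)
The plan is to deduce the estimate from the general speed-of-convergence result, Theorem \ref{speed}, exactly as in the proof of Theorem \ref{determinants-speed}. That theorem needs three ingredients: mod-Gaussian convergence, a zone of control in the sense of \ref{Z1}--\ref{Z2}, and the compatibility inequality $\g\le\tfrac{v-1}{2}$. Mod-Gaussian convergence of $\left(X_n^{CJ,\b,\d}\right)_{n\in\N}$ on $S_{1/3}$, with parameter $t_n^\b=\tfrac{\log n}{2\b}$ and limiting function $\psi^{CJ,\b,\d}$, has already been established above, so the whole work reduces to producing an explicit zone of control.

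To verify \ref{Z1} I would start from the pointwise inequality $\left|\psi_n(i\xi)-1\right|\le\left|\log\psi_n(i\xi)\right|\exp\left(\left|\log\psi_n(i\xi)\right|\right)$, which holds because $\psi_n(i\xi)=\exp\left(\log\psi_n(i\xi)\right)$ and $|\ex^w-1|\le|w|\ex^{|w|}$; this turns the task into a fully explicit bound on $\left|\log\psi_n(i\xi)\right|$. Writing $\log\psi_n(z)=\log\E\left[\ex^{zX_n^{CJ,\b,\d}}\right]-t_n^\b\tfrac{z^2}{2}$, substituting the product formula (\ref{JacCirc-Mellin}), and applying the fundamental asymptotic estimate of Lemma \ref{main_theorem} to the resulting sums of $\log\G$, one expresses $\log\psi_n(i\xi)$ as the fixed $\Upsilon$-combination defining $\log\psi^{CJ,\b,\d}(i\xi)$ plus a remainder controlled by the absolute constant $\tilde K$. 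Since $\psi^{CJ,\b,\d}(0)=1$ while the first derivative of the $\Upsilon$-combination at $0$ need not vanish, the behaviour near $\xi=0$ is linear, which produces the index $v=1$; the growth over the zone is of order $|\xi|^2\log|\xi|$, hence super-quadratic, so \ref{Z2} forces $w\ge 2$ and the clean choice $w=3$ majorises it by a cubic exponential, giving the explicit value $K_2^\b=\tfrac{8}{\b^2}+\tfrac{9}{\b}+4$, while $K_1^\b$ gathers $\tilde K$ together with the constants coming from the $\Upsilon$ terms.

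The heart of the argument, and the point I expect to be the main obstacle, is precisely this last, constant-explicit estimate on the $\Upsilon$-combination along the shifted imaginary lines $1+\d+\tfrac{i\xi}{2}-\bh$, $1+\bar{\d}+\tfrac{i\xi}{2}-\bh$ and $1+\d+\bar{\d}+i\xi-\bh$. It relies on the integral representation of $\Upsilon$ recalled in Section \ref{section:results} and on the asymptotics of the Barnes $G$- and the $\log\G$-functions, separating real and imaginary parts and keeping track of every constant; the coefficients $\tfrac{8}{\b^2}$ and $\tfrac{9}{\b}$ in $K_2^\b$ originate from the $\tfrac{z^2}{\b}$ summand of $\Upsilon$ combined with the $\bh$-shifts. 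Because the $\Upsilon$ building blocks here are the same as those entering the $\b$-determinant computations, this estimate coincides with the one obtained in the proof of Theorem \ref{determinants-speed}, which explains why the same constant $K_2^\b$ appears.

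Finally I would fix $v=1$, $w=3$, $\g=0$ and $D=\tfrac{1}{4K_2^\b}$. Then \ref{Z2} holds, since $w\ge 2$, $-\half<\g=0\le\tfrac{1}{w-2}=1$ and $D=\tfrac{1}{4K_2^\b}=\left(\tfrac{1}{4K_2^\b}\right)^{1/(w-2)}$, and moreover $\g=0=\tfrac{v-1}{2}$, so the extra hypothesis of Theorem \ref{speed} is met. That theorem then yields
\begin{equation*}
d_{Kol}\left(\frac{X_n^{CJ,\b,\d}}{\sqrt{t_n^\b}},\,\mathcal{N}(0,1)\right)\le C\left(\tfrac{1}{4K_2^\b},1,K_1^\b\right)(t_n^\b)^{-1/2},
\end{equation*}
and substituting the value of $t_n^\b$ produces the announced bound. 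The remaining steps are routine bookkeeping of the zone-of-control definition; the only genuinely delicate ingredient is the explicit control of $\Upsilon$ on the imaginary axis carried out in the second and third paragraphs.
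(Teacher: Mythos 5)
Your proposal is correct and follows essentially the same route as the paper: the paper likewise reduces everything to Theorem \ref{speed} via a zone of control of index $(1,3)$ with $\g=0$ and $D=\frac{1}{4K_2^\b}$, obtained from a cubic bound $\left|\Upsilon_\d(i\xi)\right|\le a|\xi|^3+b|\xi|^2+c|\xi|$ proved by the mean-value argument of \cite[Theorem 5.19]{R76} and applied three times to the $\Upsilon$-combination. The only detail you gloss over is that the complex shift $\d$ requires the shifted function $\Upsilon_\d$ of Theorem \ref{main_theorem2} together with the inequality $\left|\log(it+\d+\b')-\log(\d+\b')\right|\le \frac{|t|}{\Re(\d)+\b'}$ in place of $\left|\log(1+it)\right|\le|t|$, which is precisely the adaptation the paper records when bounding the derivative $f'(t)$ involving $G'/G$, $\ell$ and $\Psi$ at the $\d$-shifted arguments.
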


\begin{thm}[Local limit theorem for log-characteristic polynomial of the $\b$-circular Jacobi ensemble]\
\label{charactpoly-llt}\\ 
Let $(a,b)$ be a fixed interval, with $a<b$. 
\begin{align*}
\lim_{n\to\infty}\left(\frac{\log n}{2\b}\right)^{\d+\half}\P\left(X_n^{CJ,\b,\d}\in \left(\frac{2\b}{\log n}\right)^\d (a,b)\right)=\frac{b-a}{\sqrt{2\pi}}.
\end{align*}
\end{thm}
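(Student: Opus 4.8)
The plan is to obtain the statement as an immediate corollary of the general local limit theorem, Theorem \ref{llt}, applied to the sequence $\left(X_n^{CJ,\b,\d}\right)_{n\in\N}$. Two ingredients are required. The first is the mod-Gaussian convergence of $X_n^{CJ,\b,\d}$ on $S_{\frac{1}{3}}$ with parameters $t_n^\b=\frac{\log n}{2\b}$, which has already been proved. The second is the existence of a \emph{zone of control}, i.e.\ the verification of conditions \ref{Z1} and \ref{Z2}. Granting both, one applies Theorem \ref{llt} with $Y_n=X_n^{CJ,\b,\d}/\sqrt{t_n^\b}$ and $x=0$, and rewrites the conclusion in terms of $X_n^{CJ,\b,\d}$ by a change of variables.

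To produce the zone of control, I would start from the exact cumulant generating function expansion of Lemma \ref{JacCirc-lemma}. Writing $\psi_n(i\xi)=\E\left[\ex^{i\xi X_n^{CJ,\b,\d}}\right]\ex^{t_n^\b\xi^2/2}$, the aim is to bound $\left|\psi_n(i\xi)-1\right|$ by $K_1^\b|\xi|^v\ex^{K_2^\b|\xi|^w}$ on a band $\left[-D(t_n^\b)^\g,D(t_n^\b)^\g\right]$. Using the uniform control of the $\Upsilon$-remainders supplied by Lemma \ref{main_theorem}, this can be carried out with index of control $(v,w)=(1,3)$ and $K_2^\b=\frac{8}{\b^2}+\frac{9}{\b}+4$, exactly as in the proof of the speed theorem \ref{charactpoly-speed}. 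Condition \ref{Z2} then amounts to $w=3\ge2$, $-\half<\g\le\frac{1}{w-2}=1$ and $D\le\frac{1}{4K_2^\b}$, so that, by possibly shrinking the band, one has a valid zone of control for every $\g\in(0,1]$. This is precisely where the present result diverges from the Berry--Esseen estimate: the speed theorem is forced to take $\g=0$ by the extra requirement $\g\le\frac{v-1}{2}$, whereas Theorem \ref{llt} imposes no such constraint and allows the larger exponents $\g$ up to $1$.

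With the zone of control at hand, Theorem \ref{llt} yields, for every $\eta\in\left(0,\g+\half\right)$,
\begin{equation*}
\lim_{n\to\infty}\left(t_n^\b\right)^{\eta}\,\P\left[\frac{X_n^{CJ,\b,\d}}{\sqrt{t_n^\b}}\in\frac{1}{(t_n^\b)^{\eta}}(a,b)\right]=\frac{b-a}{\sqrt{2\pi}}.
\end{equation*}
Since the event above is the same as $X_n^{CJ,\b,\d}\in(t_n^\b)^{\frac{1}{2}-\eta}(a,b)$, and since $t_n^\b=\frac{\log n}{2\b}$ and $\frac{1}{t_n^\b}=\frac{2\b}{\log n}$, the limit takes exactly the form displayed in the statement once the exponent there is identified with $\eta-\half$. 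As $\eta$ runs over $\left(0,\g+\half\right)$ this exponent runs over $\left(-\half,\g\right)$; taking $\g=1$ gives the conclusion for every admissible exponent in $\left(-\half,1\right)$, in agreement with the determinant ensembles of Theorem \ref{determinants-llt}.

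The genuine difficulty is the construction of the zone of control with $\g>0$, that is, the uniform estimate of $\left|\psi_n(i\xi)-1\right|$ over a range of frequencies $|\xi|$ that grows like $(t_n^\b)^\g$ rather than remaining bounded. Concretely, one must track the $z$-dependent terms $\Upsilon\!\left(1+\d+\frac{z}{2}-\bh\right)$, $\Upsilon\!\left(1+\bar{\d}+\frac{z}{2}-\bh\right)$ and $\Upsilon\!\left(1+\d+\bar{\d}+z-\bh\right)$ of Lemma \ref{JacCirc-lemma} uniformly in $z=i\xi$ over the growing band, and show that their combined contribution stays inside the exponential envelope $\ex^{K_2^\b|\xi|^3}$; this is exactly the role played by Lemma \ref{main_theorem} and by the explicit constant $K_2^\b$. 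Once this uniform bound is secured, the verification of \ref{Z2} and the change of variables above are routine.
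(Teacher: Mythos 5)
Your proposal is correct and follows essentially the same route as the paper: establish a zone of control of index $(1,3)$ by bounding the shifted $\Upsilon$ terms cubically in $|\xi|$ (as in the proof of Theorem \ref{charactpoly-speed}), then invoke Theorem \ref{llt} with $\g=1$ — unconstrained by $\g\le\frac{v-1}{2}$, which only binds the Berry--Esseen bound — and change variables to get exponents in $\left(-\half,1\right)$. The only cosmetic difference is that the paper organizes the uniform bound through the shifted function $\Upsilon_\d$ of Theorem \ref{main_theorem2} together with the inequality $\left|\log\left(it+\d+\bh\right)-\log\left(\d+\bh\right)\right|\le \frac{|t|}{\Re(\d)+\bh}$, needed because $\d$ is complex, rather than citing Theorem \ref{main_theorem} directly.
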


All the above stated limiting theorems hold for the $\b$-circular ensemble considering $\d=0$.

\section{A key asymptotic expansion}
\label{section:asymptotics}
In order to show mod-Gaussian convergence for the log-determinants of random ensembles or for log-characteristic polynomials, we need to establish the asymptotic behavior of their moment generating functions. In most cases, it boils down to estimate the following product of the ratios of Gamma functions
\begin{equation}\label{ratio}
\prod_{k=1}^n \frac{\G\left(\bh k+z \right)}{\G\left(\bh k \right) },
\end{equation}
where $\bh > 0$ and $z$ belongs to the strip $S_{\bh}$. 
Note that, instead of having the convergence of these moment generating functions on the entire strip $S_{\bh}$, it is sufficient to prove it on a domain that becomes larger and larger and approaches the strip as $n$ goes to infinity.
This justifies the domain
$\lbrace z: z \in S_{\bh}, |z| <\frac{\b}{8} n^{1/6} \rbrace$ used in the subsequent theorems.\\
We first present a key asymptotical result for (\ref{ratio}) for every real $\b>0$, we shall use in order to deduce the expansions in lemmas \ref{GUE-lemma}, \ref{Determinant-lemma} and \ref{JacCirc-lemma}. The main problem is addressed using the Abel-Plana formula (Theorem \ref{Abel-Plana}) which allows to evaluate some specific even non-convergent sums. Indeed, $z$ being unbounded, we can not simplify these sums using other approaches (e.g. the Taylor expansion).

\begin{thm} \label{main_theorem} For all $n\ge 1$ and any $z\in S_{\bh}$ with $|z| < \frac{\b}{8} n^{1/6}$, we have 
\begin{align*}
\log\left(\prod_{k=1}^n\frac{\G\left(\bh k+z\right)}{\G\left(\bh k\right)}\right)&=z\left(\left(\half-\frac{1}{\b}\right)\log n+n\log\left(\bh n\right)-n\right)\\
&\quad +\frac{z^2}{\b}\log n +\Upsilon(z)+O\left(\frac{|z|+|z|^2+|z|^3}{n} \right),
\end{align*}
for some absolute implied constant, where
\begin{align}\label{upsilon}
\Upsilon(z)&=\bh \log G\left(\frac{2z}{\b}+1\right)-\left(z-\half\right)\log\G\left(\frac{2z}{\b}+1\right)\\
&\nonumber \quad +\int_0^\infty\frac{\varphi(s)\left(\ex^{-sz}-1\right)}{\ex^{s\bh}-1}ds+\frac{z^2}{\b}+\frac{z}{2}, 
\end{align}
with
\begin{equation*}
\varphi(s)=\left( \frac{1}{2} - \frac{1}{s} + \frac{1}{\ex^s-1}\right) \frac{1}{s} 
\end{equation*}
and $G$ denotes the Barnes $G$-function.
\end{thm}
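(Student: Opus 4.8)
The plan is to isolate, inside $\log\G$, the polynomial part of Stirling's expansion (which carries all the growth in $n$) from the genuinely convergent Binet remainder. Writing $a:=\bh$ and $\zeta:=\frac{2z}{\b}$, set $S_n(z):=\sum_{k=1}^n\bigl(\log\G(ak+z)-\log\G(ak)\bigr)$ and apply Binet's second formula
\begin{equation*}
\log\G(w)=\Bigl(w-\half\Bigr)\log w-w+\half\log 2\pi+\int_0^\infty\varphi(s)\,\ex^{-ws}\,ds,\qquad \Re(w)>0,
\end{equation*}
which is legitimate for $w=ak$ and $w=ak+z$ because $\Re(z)>-a$ forces $\Re(ak+z)>0$ for every $k\ge 1$. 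Each summand then splits as an elementary piece $E_k(z)=(ak+z-\half)\log(ak+z)-(ak-\half)\log(ak)-z$ plus the Binet remainder $R_k(z)=\int_0^\infty\varphi(s)\,\ex^{-aks}(\ex^{-zs}-1)\,ds$.

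The remainder sum is the short and decisive computation. Interchanging sum and integral and summing the geometric series $\sum_{k=1}^n\ex^{-aks}=\frac{1-\ex^{-ans}}{\ex^{as}-1}$ gives
\begin{equation*}
\sum_{k=1}^nR_k(z)=\int_0^\infty\frac{\varphi(s)\,(\ex^{-zs}-1)}{\ex^{as}-1}\,ds-\int_0^\infty\frac{\varphi(s)\,(\ex^{-zs}-1)}{\ex^{as}-1}\,\ex^{-ans}\,ds.
\end{equation*}
The first integral is exactly the one entering $\Upsilon(z)$, while the second is an error term: expanding $\ex^{-zs}-1$ near $s=0$ and exploiting the decay supplied by $\ex^{-ans}$ bounds it by $O\bigl((|z|+|z|^2+|z|^3)/n\bigr)$. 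This already pins down both the integral in $\Upsilon$ and the precise shape of the remainder in the theorem.

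The substance lies in the elementary, divergent sum $\sum_{k=1}^nE_k(z)$, for which Taylor expanding $\log(1+\zeta/k)$ termwise is illegitimate since the small indices $k\lesssim|\zeta|$ spoil it; this is precisely the regime the Abel-Plana formula (Theorem \ref{Abel-Plana}) is meant to handle. One clean route is to first record the exact closed forms, obtained by Abel summation and the Barnes recurrence $\log G(w+1)-\log G(w)=\log\G(w)$,
\begin{equation*}
\sum_{k=1}^n\log(ak+z)=n\log a+\log\frac{\G(n+1+\zeta)}{\G(1+\zeta)},
\end{equation*}
\begin{equation*}
\sum_{k=1}^n k\log(k+\zeta)=n\log\G(n+1+\zeta)-\log G(n+1+\zeta)+\log G(1+\zeta),
\end{equation*}
from which the $n$-independent factors already produce the transcendental terms $\bh\log G\bigl(\frac{2z}{\b}+1\bigr)$ and $-(z-\half)\log\G\bigl(\frac{2z}{\b}+1\bigr)$ of $\Upsilon$. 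What remains is the behaviour of $\log\G(n+1+\zeta)$ and $\log G(n+1+\zeta)$ as $n\to\infty$, and here Abel-Plana supplies Stirling/Barnes-type expansions that are uniform in the growing parameter $\zeta$ and come with an explicit integral remainder. These expansions deliver the linear term $z\bigl((\half-\frac1\b)\log n+n\log(an)-n\bigr)$, the Gaussian term $\frac{z^2}{\b}\log n$, and the residual constants $\frac{z^2}{\b}+\frac z2$.

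The main obstacle is uniformity. Everything must hold not for fixed $z$ but uniformly over the enlarging domain $|z|<\frac{\b}{8}n^{1/6}$, and this is exactly where that hypothesis is consumed: it keeps every argument of the logarithms and Gamma factors in the right half-plane, it makes the simultaneous expansion in $n$ and $\zeta$ of $\log\G(n+1+\zeta)$ and $\log G(n+1+\zeta)$ valid with a cubic-in-$|\zeta|$ remainder, and it forces the accumulated error to be $O\bigl(|z|^3/n\bigr)=o(1)$. Concretely, the delicate points are bounding the Abel-Plana boundary and correction integrals at the upper endpoint $n$ by $O\bigl((|z|+|z|^2+|z|^3)/n\bigr)$ uniformly in $z$, and verifying that the $n$-independent (Glaisher--Kinkelin) constants coming from the expansion of $\log G(n+1+\zeta)$ cancel against those of the unshifted $\log G(n+1)$, leaving precisely the $\Upsilon$-constants above. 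Once these uniform estimates are secured, it remains only to collect the elementary and transcendental contributions and check that they reassemble into the stated expression for $\Upsilon(z)$.
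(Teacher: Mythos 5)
Your proposal is correct in substance, but it reorganizes the hard half of the argument differently from the paper. The treatment of the Binet remainder sum is identical to the paper's: same interchange of sum and integral, same geometric series, same splitting into the $\Upsilon$-integral plus an exponentially damped error of size $O\left(\left(|z|+|z|^2+|z|^3\right)/n\right)$ (the paper in fact gets $O(|z|/n)$ there, using $|\ex^{-sz}-1|\le s|z|\ex^{s|z|}$ under $|z|\le \frac{\b}{4}n$). For the elementary sum, however, the paper applies the Abel--Plana formula (Theorem \ref{Abel-Plana}) \emph{directly} to the $z$-dependent summand $f(s)=\left(\frac{\b}{2}(s+1)+z-\half\right)\log\left(1+\frac{z}{\frac{\b}{2}(s+1)}\right)$: the main integral $I_1$ is computed in closed form by integration by parts and then expanded, the upper-endpoint correction $I_3$ is bounded by $O\left((|z|+|z|^2)/n\right)$ via the representation (\ref{deflog}) of the logarithm, and the transcendental terms $\frac{\b}{2}\log G\left(\frac{2z}{\b}+1\right)-\left(z-\half\right)\log\G\left(\frac{2z}{\b}+1\right)$ of $\Upsilon$ emerge from the lower-endpoint correction integral $I_2(z)$, identified through the integral representation (\ref{logG}) of $\log G$ and the second Binet formula (\ref{Binet2}). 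You instead sum exactly, via Abel summation and the recurrence (\ref{finiteG}), to express everything through $\log\G(n+1+\zeta)$, $\log G(n+1+\zeta)$, $\log\G(1+\zeta)$, $\log G(1+\zeta)$ with $\zeta=2z/\b$ (your two closed forms are correct, and the $n$-independent factors do produce exactly the $\Upsilon$-terms above), which relocates all the analysis into a single reusable ingredient: a Stirling/Barnes expansion of $\log\G(n+1+\zeta)$ and of the ratio $G(1+n+\zeta)/G(1+n)$ uniform in $|\zeta|\lesssim n^{1/6}$ with cubic-in-$|\zeta|$ remainder. That ingredient is genuine work --- it is not a standard fixed-argument Stirling formula --- but it exists: it is precisely Proposition 17 of \cite{KN12}, which the paper itself invokes when sketching this same ``direct method'' as an alternative proof of Lemma \ref{GUE-lemma}, and it cancels the Glaisher--Kinkelin constants automatically since it is stated for the ratio. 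So your route buys modularity (one citable uniform expansion instead of bespoke endpoint estimates), at the cost of either citing \cite{KN12} or reproving that expansion by the very Abel--Plana analysis you were hoping to package away; the paper's route is self-contained and makes the error budget $O\left(\left(|z|+|z|^2+|z|^3\right)/n\right)$ completely explicit. Two small corrections: the formula you quote with $\half\log 2\pi$ and $\int_0^\infty\varphi(s)\ex^{-ws}\,ds$ is Binet's \emph{first} formula (equivalent to (\ref{Binet1}); the second is the $\arctan$ representation (\ref{Binet2})), and when manipulating $\log G(1+\zeta)$ for complex $\zeta$ you should note that your identities extend from $\zeta>0$ by analytic continuation on $\Re(\zeta)>-1$, which is exactly what the hypothesis $z\in S_{\b/2}$ guarantees.
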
                                    
        
\begin{proof}
For simplicity we set $\b'=\bh$. To obtain the asymptotic of 
 $\sum_{k=1}^n \left[l(\b'k+z)-l(\b'k)\right]$ as $n$ goes to infinity, we will first use  the Binet's formula (\ref{Binet1}).
Then,
\begin{align}
\label{start}
\sum_{k=1}^n \ell\left(\b'k+z\right) - \ell\left(\b' k\right))&= 
S_1 (n,z) -nz + S_2 (n,z)
\end{align}
where 
\begin{align}
S_1(n,z) := \sum_{k=1}^n \left[\left(\b'k+z-\half\right)\log\left(\b'k+z\right)-
\left(\b'k-\half\right)\log\left(\b'k\right)\right]
\end{align}
and 
\begin{align}
\nonumber
S_2(n,z) &:= \sum_{k=1}^n\int_0^\infty \varphi(s)\left(\ex^{-s\left(\b'k+z\right)}-\ex^{-s\b'k}\right)ds\\
\nonumber
&=\int_0^\infty \varphi(s)\left(\ex^{-sz}-1\right)\sum_{k=1}^n \ex^{-s\b'k}ds\\
&=\int_0^\infty\frac{\varphi(s)\left(\ex^{-sz}-1\right)}{\ex^{s\b'}-1}ds-\int_0^\infty \frac{\varphi(s)\left(\ex^{-sz}-1\right)}{\ex^{s\b'}-1}\ex^{-s\b'n}ds.
\end{align}
Using the inequalities $\ex^x - 1 \geq  x$ and $\left|\ex^z-1\right|\le |z|\ex^{|z|}$ for any $x\in [0, \infty)$, respectively $z\in\C$, we obtain
\begin{align*}
\left| \int_0^\infty \varphi(s)\frac{\ex^{-sz}-1}{\ex^{s\b'}-1}\ex^{-s\b'n}ds \right| &\leq \frac{1}{12}
\int_0^\infty \frac{\left|\ex^{-sz}-1\right|}{\left|\ex^{s\b'}-1\right|}\ex^{-s\b'n}ds\\
&\le 
\frac{1}{12}\int_0^\infty \frac{s\left|z\right| \ex^{s|z|}}{s\b'}\ex^{-s\b'n}ds
\le \frac{1}{6\left(\b'\right)^2}\frac{|z|}{n},
\end{align*}
as soon as $|z| \leq \beta'n/2$, which is widely compatible with the assumptions of Theorem  \ref{main_theorem}.
 We get
\begin{equation}
\label{s2}
S_2(n,z) = \int_0^\infty\frac{\varphi(s)\left(\ex^{-sz}-1\right)}{\ex^{s\b'}-1}ds + O\left(\frac{|z|}{n}\right)
\end{equation}
with an absolute implied constant. \\
It remains to estimate the term $S_1(n,z)$.
\begin{align}\nonumber
S_1(n,z)&=\sum_{k=1}^n \left[\left(\b'k+z-\half\right)\log\left(\b'k+z\right)-
\left(\b'k-\half\right)\log\left(\b'k\right)\right]\\
\nonumber
&=\sum_{k=1}^n \left(\b'k+z-\half\right)\log\left(1+\frac{z}{\b'k}\right)+z\sum_{k=1}^n\log(\b'k)\\
\label{s1-comput}
&=z\log\left((\b')^n n!\right)+\sum_{k=0}^{n-1} f(k)\,,
\end{align}
where
\begin{equation}
\label{f}
f(s) =\left(\b'(s+1)+z-\half\right)\log\left(1+\frac{z}{\b'(s+1)}\right).
\end{equation}
Note that, since $z$ is growing to infinity with $n$, it is not possible to Taylor-expand the term $\log\left(1+\frac{z}{\b'(k+1)}\right)$ for all $k$. Here is where the Abel-Plana formula comes into play. It is an easy exercise to check that the conditions of Theorem \ref{Abel-Plana} are satisfied. We leave the details to the reader. 
Then by (\ref{AP})
\begin{align}
\label{i1-rn}
\sum _{k=0}^{n-1} f(k) & = I_1(n,z) + R_n (z)
\end{align}
where
\begin{align}
I_1(n,z) &:= \int_1^{n+1}\left(\b's+z-\half\right)\log\left(1+\frac{z}{\b's}\right)ds \nonumber \\
&-\half \left(\b'(n+1)+z-\half \right) \log\left( 1+\frac{z}{\b'(n+1)}\right),
\end{align} 
and 
where $R_n(z)$ is the reminder coming from the Abel-Plana formula, 
\begin{equation}
\label{rn}
R_n(z):=\half \left(\b'+z-\half\right)\log\left(1+\frac{z}{\b'}\right)+ I_2(z) -I_3(n,z),
\end{equation}
with
\begin{align}
\nonumber
I_2(z)& =:i\int_0^{\infty}\frac
{f(is) - f(-is)}{e^{2\pi s}-1}ds,\\
\label{i2-i3}
I_3(n,z) &=: i\int_0^{\infty}\frac{f(n+is)-f(n-is)}{e^{2\pi s}-1}ds.
\end{align}
We start showing that
\begin{equation*}
I_3(n,z)=O\left(\frac{|z|+|z|^2
}{n}\right).
\end{equation*}
for some absolute implied constant, eventually depending on $\b'$. 
First,
\begin{align}
\nonumber
\left|I_3(n,z)\right| \leq  \int_0^{\infty}\frac{ \b's}{e^{2\pi s}-1}\left|\log\left(1+\frac{z}{\b'(1+n+is)}\right)+\log\left(1+\frac{z}{\b'(1+n-is)} \right)\right|ds \\ \label{i3}
+ \int_0^{\infty}\frac{\left|\b'(1+n)+z-\half\right|}{e^{2\pi s}-1} \left| \log\left(1+\frac{z}{\b'(1+n+is)}\right)-\log\left(1+\frac{z}{\b'(1+n-is)}\right) \right|ds. 
\end{align}
We proceed  estimating both above integrals, using the formula 
\begin{equation}
\label{deflog}
\log (1 + \zeta) = \int_0^1\frac{\zeta}{t\zeta + 1} dt,
\end{equation}
which is valid for $\zeta \in \mathbb C \setminus(-\infty, -1]$.
\\
For the first integral (\ref{i3}), note that, from (\ref{deflog})
\begin{equation*}
|\log(1+\omega)| \leq \frac{|\omega|}{a} \log \frac{1}{1-a},
\end{equation*}
as soon as $|\omega| \leq a<1$.
Therefore since
\begin{align*}
\frac{|z|}{|\b'(1+n+is)|}\le \frac{|z|}{\b' n}\le \frac{1}{4},
\end{align*}
it follows
\begin{equation*}
\left|\log\left(1+\frac{z}{\b'(1+n+is)}\right)+\log\left(1+\frac{z}{\b'(1+n-is)} \right)\right|=O\left(\frac{|z|}{|\b'(1+n+is)|}\right)=O\left(\frac{|z|}{n}\right)
\end{equation*}
for some absolute implied constant and all $n\ge 1$.\\
For the second integral, 
using (\ref{deflog}) again
we have, for $|u|, |v| \leq a < 1$
\begin{equation*}
\left| \log(1+u) - \log (1+v)\right| \leq \frac{|u-v|}{1-a},\end{equation*}
so that  
\begin{equation*}
\left| \log\left(1+\frac{z}{\b'(1+n+is)}\right)-\log\left(1+\frac{z}{\b'(1+n-is)}\right) \right|=O\left(\frac{|z|s}{(\b')(1+n)^2}\right),
\end{equation*}
for some absolute implied constant and all $n\ge 1$ as well.\\
Gathering all estimates we obtain
\begin{equation}
\label{i3-bound}
\left|I_3(n,z)\right| \leq O\left(\frac{|z|+|z|^2}{n}\right).
\end{equation}
The expression $I_1(n,z)$ can be computed explicitly, using integration by parts: 
\begin{align*}
I_1(n,z)&= \int_1^{n+1}\left(\b' s + z- \half\right) \log \left(1 + \frac{z}{\b's}\right) ds \\
&-\half\left(\b'(n+1) +z-\half\right)\log \left(1 + \frac{z}{\b'(n+1)}\right)\\
&= \left[  \left(\b'\frac{s^2}{2} +(z-\half) s\right)\log \left(1 + \frac{z}{\b's}\right)\right]_1^{n+1} +\frac{z}{2}\int_1^{n+1}\frac{\b's + 2z-1}{z+\b's} ds\\
&-\half\left(\b'(n+1) +z-\half\right)\log \left(1 + \frac{z}{\b'(n+1)}\right)\\
\end{align*}
and since
\begin{align*}\int_1^{n+1}\frac{\b's + 2z-1}{z+\b's} ds &= n + \frac{z-1}{\b'} \left[\log (s+ \frac{z}{\b'})\right]_1^{n+1}\\
&= n + \frac{z-1}{\b'}\log (n+1)\\
& + \frac{z-1}{\b'}\log \left(1 + \frac{z}{\b'(n+1)}\right)- \frac{z-1}{\b'}\log(1 + \frac{z}{\b'})\,,
\end{align*}
we get
\begin{align*}
I_1(n,z)&= \left(\b'\frac{n(n+1)}{2} + (z-\half) (n +\half) + \frac{z(z-1)}{2\b'}\right) \log \left(1 + \frac{z}{\b'(n+1)}\right)\\
&- \left(\frac{\b'}{2} + z - \half + \frac{z(z-1)}{2\b'}\right) \log \left(1 + \frac{z}{\b'}\right)\\
&+ \frac{nz}{2} + \frac{z(z-1)}{2\b'}\log (n+1),.\
\end{align*}
In the last step, we expand  the logarithm, and conclude:
\begin{align*}
I_1(n,z)&= nz +  \frac{z(z-1)}{2\b'} \log n   -\frac{z}{2\beta'}+ \frac{3z^2}{4\b'} \\
&+ \left(-\frac{\b'}{2} - z + \half - \frac{z(z-1)}{2\b'}\right) \log \left(1 + \frac{z}{\b'}\right) + O\left( \frac{|z| + |z|^2+ |z|^3}{n}\right)\,.
\end{align*}
Our last goal is to write $I_2(z)$ in terms of the Barnes and Gamma functions. 
First, for $f$ defined in (\ref{f}), an elementary computation leads to
\begin{align*}
f(is) - f(-is) &= i\b' s \log \left(1 + \left(1+ \frac{z}{\b'}\right)^2s^{-2}\right) -i \b' s \log \left(1 + s^{-2}\right)\\
&\quad  +2i \left(\b' + z -\half \right) \left(\arctan \frac{s}{1 + \frac{z}{\b'}} - \arctan s\right),
\end{align*}
and then,
\begin{align*}
I_2(z)&= i \int_0^\infty \frac{f(is) - f(-is)}{e^{2\pi s} -1} ds \\
&= -\b' \int_0^\infty  \log \left(1 + \left(1+ \frac{z}{\b'}\right)^2s^{-2}\right)  \frac{s\!\ ds}{e^{2\pi s} -1} +   
\b' \int_0^\infty  \log \left(1 + s^{-2}\right)\frac
{s\!\ ds}
{e^{2\pi s} -1} \\
&\quad  - \left(\b' + z -\half\right)  \int_0^\infty  \left(\arctan \frac{s}{1 + \frac{z}{\b'}}\right) \frac{ds}{e^{2\pi s} -1} + (\b' + z -\half)  \int_0^\infty \arctan s   \frac{ds}{e^{2\pi s} -1}.
\end{align*}
Using the representation (\ref{logG}) and the second Binet formula (\ref{Binet2}) we get
\begin{align}
\nonumber
I_2(z)&=\b'\log G\left(\frac{z}{\b'}+1\right)-\left(z-\half\right)\log\G\left(\frac{z}{\b'}+1\right)\\
\label{i2-comput}
&\quad +\log\left(\frac{z}{\b'}+1\right)\left[\frac{z^2}{2\b'}-\frac{z}{2\b'}+\frac{z}{2}-\frac{1}{4}\right]-\frac{z^2}{4\b'}+\frac{z}{2\b'}+\frac{z}{2}-\frac{z}{2}\log 2\pi\,.
\end{align}
Gathering (\ref{rn}), (\ref{i2-i3}), (\ref{i3-bound}) and (\ref{i2-comput}) we arrive at
\begin{align}
\nonumber
R_n (z) &= \half \left(\b'+z-\half\right)\log\left(1+\frac{z}{\b'}\right)\\
\nonumber
&\quad +\b'\log G\left(\frac{z}{\b'}+1\right)-\left(z-\half\right)\log\G\left(\frac{z}{\b'}+1\right)\\
\nonumber
&\quad +\log\left(\frac{z}{\b'}+1\right)\left[\frac{z^2}{2\b'}-\frac{z}{2\b'}+\frac{z}{2}-\frac{1}{4}\right]-\frac{z^2}{4\b'}+\frac{z}{2\b'}+\frac{z}{2}-\frac{z}{2}\log 2\pi\\
\label{rn-bound}
&\quad + O\left(\frac{|z|+|z|^2}{n}\right)\,, 
\end{align}
and with (\ref{start}), (\ref{s2}),  (\ref{s1-comput}) (\ref{i1-rn}) and  (\ref{rn-bound}) we may conclude
\begin{align*}
\log\left(\prod_{k=1}^n\frac{\G(\b'k+z)}{\G(\b'k)}\right)&=z\log\left((\b')^n n!\right)+z\left(n-\frac{1}{\b}\log n\right)+\frac{z^2}{2\b'}\log n\\
 &\quad +\log\left(1+\frac{z}{\b'}\right)\left[-\frac{z^2}{2\b'}-z+\frac{z}{2\b'}+\half-\frac{\b'}{2}\right]+\frac{3z^2}{4\b'}-\frac{z}{2\b'}\\
 &\quad +\half \left(\b'+z-\half\right)\log\left(1+\frac{z}{\b'}\right)\\
 &\quad +\b'\log G\left(\frac{z}{\b'}+1\right)-\left(z-\half\right)\log\G\left(\frac{z}{\b'}+1\right)\\
&\quad +\log\left(\frac{z}{\b'}+1\right)\left[\frac{z^2}{2\b'}-\frac{z}{2\b'}+\frac{z}{2}-\frac{1}{4}\right]-\frac{z^2}{4\b'}+\frac{z}{2\b'}+\frac{z}{2}\\
&\quad -\frac{z}{2}\log 2\pi -nz+\int_0^\infty\frac{\varphi(s)\left(e^{-sz}-1\right)}{e^{s\b'}-1}ds+O\left(\frac{|z|+|z|^2+|z|^3}{n} \right)\\
&=z\left(\left(\half-\frac{1}{\b}\right)\log n+n\log(\b'n)-n\right)+\frac{z^2}{2\b'}\log n\\
&\quad +\b'\log G\left(\frac{z}{\b'}+1\right)-\left(z-\half\right)\log\G\left(\frac{z}{\b'}+1\right)\\
&\quad +\int_0^\infty\frac{\varphi(s)\left(e^{-sz}-1\right)}{e^{s\b'}-1}ds+\frac{z^2}{2\b'}+\frac{z}{2}+O\left(\frac{|z|+|z|^2+|z|^3}{n} \right),
\end{align*}
which ends the proof.
\end{proof}

\begin{rmk} In the same way as for Theorem \ref{main_theorem}, it is possible to prove the following slightly more general statement:
\begin{thm}\label{main_theorem2} 
Let $\delta\in\C$ be a fixed complex number with $\Re(\d)>-\bh$ and consider $n_\d\in\N$ big enough such that $|\d| < \frac{\b}{8} n_\d^{1/6}$. Then for all $n\ge n_\d$ and any $z\in\C$, such that $\Re(\d+z)>-\bh$ and $|z| < \frac{\b}{8} n^{1/6}$, we have 
\begin{align*}
\log\left(\prod_{k=1}^n\frac{\G\left(\bh k+\d+z\right)}{\G\left(\bh k
+\d\right)}\right)&=z\left(\left(\half-\frac{1}{\b}+\frac{2\d}{\b}\right)\log n+n\log\left(\bh n\right)-n\right)\\
&\quad +\frac{z^2}{\b}\log n +\Upsilon_\d(z)+O\left(\frac{|z|+|z|^2+|z|^3}{n} \right),
\end{align*}
for some absolute implied constant, with $\Upsilon_\d(z):=\Upsilon(z+\d)-\Upsilon(\d)$,
%
where the function $\Upsilon$ is defined as in Theorem \ref{main_theorem}.
\end{thm}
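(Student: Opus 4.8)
The plan is to deduce Theorem~\ref{main_theorem2} from Theorem~\ref{main_theorem} by a ratio (telescoping-in-the-shift) argument, and only afterwards to sharpen the error term by revisiting the Abel--Plana computation with the shift absorbed into the index. Writing $\b'=\bh$, set
\[
L_n(w):=\log\left(\prod_{k=1}^n\frac{\G(\b'k+w)}{\G(\b'k)}\right),
\]
so that the quantity to estimate is exactly $L_n(\d+z)-L_n(\d)$, since the factor $\G(\b'k+\d)$ cancels between numerator and denominator. Both $\d$ and $\d+z$ lie in the half-plane $\Re(\cdot)>-\b'$ by hypothesis, and for $n\ge n_\d$ they obey the size constraint $|\cdot|<\tfrac{\b}{8}n^{1/6}$ (for $\d+z$ this uses $|\d|<\tfrac{\b}{8}n_\d^{1/6}$ together with $|z|<\tfrac{\b}{8}n^{1/6}$, after adjusting the constant). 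Hence Theorem~\ref{main_theorem} applies to each of $L_n(\d+z)$ and $L_n(\d)$.

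First I would simply subtract the two expansions. The linear-in-argument pieces give $z\big((\half-\tfrac1\b)\log n+n\log(\b'n)-n\big)$, while the quadratic pieces give $\tfrac{(\d+z)^2-\d^2}{\b}\log n=\tfrac{z^2}{\b}\log n+\tfrac{2\d z}{\b}\log n$. The extra term $\tfrac{2\d z}{\b}\log n$ is itself linear in $z$, so it amalgamates with the first bracket and promotes the coefficient $\half-\tfrac1\b$ to $\half-\tfrac1\b+\tfrac{2\d}{\b}$, exactly as claimed. Finally the $\Upsilon$-contributions combine into $\Upsilon(\d+z)-\Upsilon(\d)$, which is the definition of $\Upsilon_\d(z)$. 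Thus all the main terms of Theorem~\ref{main_theorem2} emerge with no computation beyond bookkeeping.

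The delicate point is the error term. Subtracting the two remainders only yields $O\!\big((1+|z|+|z|^2+|z|^3)/n\big)$, because invoking Theorem~\ref{main_theorem} at the argument $\d+z$ produces a remainder controlled by $|\d+z|+|\d+z|^2+|\d+z|^3$, whose expansion leaves behind $\d$-dependent constants that do not vanish as $z\to0$. The stated bound $O\!\big((|z|+|z|^2+|z|^3)/n\big)$ is strictly stronger, as it must vanish at $z=0$, where the left-hand side is identically $0$. To recover it I would, as the remark suggests, rerun the proof of Theorem~\ref{main_theorem} verbatim with every occurrence of $\b'k$ replaced by $\b'k+\d$: Binet's formula is applied to $\log\G(\b'k+\d+z)-\log\G(\b'k+\d)$, the integral term $S_2$ acquires a harmless factor $\ex^{-s\d}$ in its integrand, and the generating function of the Abel--Plana step becomes $f(s)=(\b'(s+1)+\d+z-\half)\log\big(1+\tfrac{z}{\b'(s+1)+\d}\big)$. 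Every estimate then goes through with $\d$ playing the role of a fixed additive constant, and since all surviving contributions are genuine multiples of powers of $z$, the remainder comes out in the sharp form.

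I expect the main obstacle to be precisely this error control in the direct computation. One must check that the shift $\d$ does not disturb the applicability of the Abel--Plana formula of Theorem~\ref{Abel-Plana}, namely that $f(n\pm is)$ stays clear of the branch cut of the logarithm (guaranteed by $\Re(\d+z)>-\b'$ and $n\ge n_\d$), and that the $I_3$-type tail still decays like $(|z|+|z|^2)/n$ uniformly. Since $\d$ only shifts the denominators $\b'(1+n)\mapsto\b'(1+n)+\d$ by a bounded amount, the inequalities underlying Theorem~\ref{main_theorem} persist after enlarging $n_\d$; but it is this uniformity, rather than the mere transcription of the algebra, that will require genuine attention.
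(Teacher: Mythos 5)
Your proposal is correct and takes essentially the same route as the paper: the paper gives no separate proof of Theorem \ref{main_theorem2}, stating only that it is obtained \emph{in the same way} as Theorem \ref{main_theorem}, i.e.\ by rerunning the Binet/Abel--Plana argument with $\bh k$ replaced by $\bh k+\d$, which is exactly your second step. Your preliminary telescoping observation --- that subtracting Theorem \ref{main_theorem} at $\d+z$ and at $\d$ already yields all main terms (the promoted coefficient $\half-\frac{1}{\b}+\frac{2\d}{\b}$ via the cross term $\frac{2\d z}{\b}\log n$, and $\Upsilon_\d(z)=\Upsilon(\d+z)-\Upsilon(\d)$) but only the weaker error $O\left(\frac{1+|z|+|z|^2+|z|^3}{n}\right)$, which fails to vanish at $z=0$ --- is a valuable addition that correctly pinpoints why the full rerun, rather than mere subtraction, is needed for the stated error bound.
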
   
\end{rmk}

\section{Proofs of lemmas \ref{GUE-lemma}, \ref{Determinant-lemma}, \ref{JacCirc-lemma} and of theorems \ref{GUE-speed}, \ref{determinants-speed}, \ref{GUE-llt}, \ref{determinants-llt},  \ref{charactpoly-speed} and \ref{charactpoly-llt} }
\label{section:proofs}

\begin{proof}[Proof of lemma \ref{GUE-lemma}]
In the following, we shall find an asymptotic for the Laplace transform of $\log\left|\det W^H_n\right|$ when $n$ is odd. We leave to the reader to check that $n$ even the same expansion holds.
Note that, for $n$ odd, (\ref{Her-Mellin}) is equivalent to
\begin{equation}\label{Her-Mellinodd}
\E\left|\det W^H_n\right|^z=2^{\frac{nz}{2}}\frac{\G\left(\frac{z+1}{2}\right)}{\G\left(\frac{1}{2}\right)}\prod_{k=1}^{\frac{n-1}{2}}\frac{\G\left(\frac{z+1}{2}+k\right)^2}{\G\left(\frac{1}{2}+k\right)^2}.
\end{equation}
Then, by Lemma \ref{rational}.\ref{integer} we obtain 
\begin{equation*}
\log\E\left|\det W^H_n\right|^z= z \mu^{H} _{ n}  + \frac{z^2}{4} \log\left(\frac{n}{2}\right)+ \Upsilon^H(z)+o(1),
\end{equation*}
locally uniformly on $S_1$. \\
Equivalently, a direct method would apply to (\ref{Her-Mellinodd}) the finite product formula satisfied by the Barnes function (\ref{finiteG}) and get that
\begin{align}
\nonumber
\E\left|\det W^H_n\right|^z&=2^{\frac{nz}{2}}\frac{\G\left(\frac{z+1}{2}\right)}{\G\left(\frac{1}{2}\right)}\left(\frac{G\left(\frac{z+1}{2}+1+\frac{n-1}{2}\right)}{G\left(\frac{z+1}{2}+1\right)}\frac{G\left(\frac{1}{2}+1\right)}{G\left(\frac{1}{2}+1+\frac{n-1}{2}\right)}\right)^2\\
\label{first}
&=\frac{\G\left(\frac{z+1}{2}\right)}{\G\left(\frac{1}{2}\right)}\frac{G\left(\frac{3}{2}\right)^2}{G\left(\frac{z+1}{2}+1\right)^2}2^{\frac{nz}{2}}\left(\frac{G\left(\frac{z}{2}+\frac{n}{2}+1\right)}{G\left(\frac{n}{2}+1\right)}\right)^2.
\end{align}
Then, to manage the factor which depends on $n$ in (\ref{first}), one could use the estimation of Proposition 17 in  \cite{KN12}, which holds true for $|\zeta| \leq \frac{1}{2}p^{1/6}$ and gives
\begin{equation*}
\frac{G(1+\zeta + p)}{G(1+p)} = (2\pi)^{\zeta/2}e^{-(p+1)\zeta} (1+p)^{\zeta^2/2 + p\zeta} S_p(\zeta)
\end{equation*}
with
\begin{equation*}\log S_p(\zeta) = O\left(\frac{|\zeta|^2 + |\zeta|^3}{p}\right)
\end{equation*}
with some absolute implied constant. This yields the same expansion as the one obtained above.
\end{proof}

\begin{proof}[Proof of lemma \ref{Determinant-lemma}]
This proof follows directly from the Mellin transforms of the determinants given by  formulas (\ref{Lag-Mellin}),  (\ref{Gram-Mellin}) and (\ref{Jac-Mellin}) and from Theorem \ref{main_theorem}. Additionally, for the  Uniform Gram ensemble, we used the following asymptotic expansion which is obtained by means of formula (\ref{Binet1}):
\begin{equation*}
l\left( \b' n + z\right) - l\left( \b' n\right) = z \left(  - \frac{1}{2 \b' n} + \log \b'+ \log n\right) + O\left(\frac{|z|+|z|^2+|z|^3}{n}\right)
\end{equation*}
for any $z\in S_{\frac{\b}{2}}$ with $|z| <\frac{\b}{8} n^{1/6}$.
\end{proof}

\begin{proof}[Proof of lemma \ref{JacCirc-lemma}] For the Circular Jacobi ensemble we simply apply Theorem \ref{main_theorem} for all terms appearing in the 
 (\ref{JacCirc-Mellin}).
\end{proof}

\begin{proof}[Proof of theorems \ref{GUE-speed} and \ref{GUE-llt}]
In order to establish both theorems, it is enough to check that mod-Gaussian convergence happens with a zone of control. Therefore we check if condition \ref{Z1} is satisfied for some parameters $\g>-\half$ and $\nu, w, D, K_1$ and $K_2$. In order to find an upper bound for $\left|\psi_n^H(i\xi)-1\right|$, we first compute an upper bound for $\left|\psi^H(i\xi)-1\right|$. The reason for this will be clear from the proof.
\vskip 10pt
\noindent\underline{1) Bound for  $\left|\psi^H(i\xi)-1\right|.$}
Using the inequality 
\begin{equation}
\label{fundineq}\left|\ex^z-1\right|\le |z|\ex^{|z|} \ ,\  (z\in\C)\;,
\end{equation} we obtain
\begin{equation}
\label{program}
\left|\psi^H(i\xi)-1\right|=\left|e^{\Upsilon^H(i\xi)}-1\right| \le \left|\Upsilon^H(i\xi)\right| e^{\left|\Upsilon^H(i\xi)\right|}
\end{equation}
where in $\Upsilon^H$ the principal branch of the logarithm is considered. Therefore, in order to achive our goal, it is sufficient to find an upper bound for $\left|\Upsilon^H(i\xi)\right|$. 
Set
\begin{equation}
\label{Upsilonagain}
f(\xi) := \Upsilon^H(i\xi) = -\ell 
\left(\frac{i\xi+1}{2}\right)-2\log G\left(\frac{i\xi+1}{2}\right).
\end{equation}
The function $f$ satisfies $f(0)=0$. Thus, by Theorem 5.19 in \cite{R76} 
\begin{equation}
\label{meanvalue}
|f(\xi)| \leq |\xi|\sup_{t \in (0,\xi)}\left|f'(t)\right|.
\end{equation}
But 
\begin{equation*}f'(t) = \frac{i}{2}\Psi\left(\frac{i t+1}{2}\right)+i\frac{G'\left(\frac{it+1}{2}\right)}{G\left(\frac{it+1}{2}\right)}
\end{equation*}
and using the formula  (\ref{der-Barnes}) together with (\ref{digamma}), we get
\begin{align}
\nonumber f'(t) &= \frac{it}{2}\Psi\left(\frac{i t+1}{2}\right)-\frac{it}{2}+\half\log 2\pi\\
\label{derivative}
&= \frac{it}{2}\log\left(\frac{it+1}{2}\right)-\frac{it}{2}\int_0^\infty e^{-s\left(\frac{it+1}{2}\right)}\left(s\phi(s)+\half\right)-\frac{it}{2}+\half\log 2\pi\,.
\end{align}
Using the elementary inequality
\begin{equation}
\label{ineqlog}\left|\log (1+ it)\right| \leq |t| \,,\end{equation}
(make $\zeta = it$ in  (\ref{deflog}))
and (\ref{digamma_bound}) we get
%
\begin{align}
\label{ee}
\left|\Upsilon^H(i\xi)\right|
&\le \frac{|\xi|^3}{2}+3|\xi|^2+|\xi|\\
\label{ff}|
&\le 3|\xi|e^{|\xi|}\,,
\end{align}
so that
\[|\Upsilon^H(i\xi)e^{|\Upsilon^H(i\xi)|} \leq 3|\xi|e^{\frac{|\xi|^3}{2}+3|\xi|^2+2|\xi|}\,.\]
Now, it is clear that there exists $k > 0$ such that, for every $x \geq 0$
\begin{align}
\label{trinom}
\frac{x^3}{2} + 3x^2 +2x \leq x^3 +k\,,
\end{align} 
and then
\begin{align}
\label{intermH}
|\Upsilon^H(i\xi)e^{|\Upsilon^H(i\xi)|} \leq 3 e^k |\xi|e^{|\xi|^3}\,.
\end{align}
Using (\ref{program}), we conclude
\begin{align*}
\left|\psi^H(i\xi)-1\right|
&\le 3 e^k |\xi|e^{|\xi|^3}\,.
\end{align*}
\vskip 10pt

\noindent\underline{2) Bound for $|\psi_n(i\xi) -1|.$}
From the proof of theorems \ref{GUE-lemma} and \ref{main_theorem},
\begin{equation*}
\psi_n^H(z)=e^{\Upsilon^H(z)+ u_n(z)}.
\end{equation*}
where
\begin{equation*}
u_n(z) := O\left(\frac{|z| + |z|^2+ |z|^3}{n}\right)
\end{equation*}
as soon as $|z| \leq \half n^{1/6}$. It means that  there exist a constant $K>0$  such that for every $n\ge 1$ integer, and  $|\xi| \leq \half n^{1/6}$
\begin{equation*}
|u_n(i\xi)| \leq K \frac{|\xi|+|\xi|^2+|\xi|^3}{n}.
\end{equation*}
Actually, we have 
\begin{equation*}
|u_n(i\xi)|  \le K|\xi|e^{|\xi|}
\end{equation*}
and also, since $|\xi|\le\half n^{1/6}$,
\begin{equation*}
|u_n(i\xi)|\le K,
\end{equation*}
Using (\ref{fundineq}) again we have
\[|\psi_n^H(i\xi) -1| \leq K\left(|\Upsilon (i\xi)| + K  |\xi| e^{ |\xi|}\right) e^{|\Upsilon (i\xi)|} \]
We then plug successively (\ref{ff}),(\ref{ee}) and (\ref{trinom}) and obtain 
\begin{align*}\left|\psi^H_n(i\xi)-1\right|&\le K(3+K)e^k  |\xi|e^{|\xi|^3}
\end{align*}
Therefore the sequence $\left(X^H_n\right)$ converges mod-Gaussian with zone of control $\left[-Dt_n^H, Dt_n^H\right]$ and index of control $(1,3)$. In particular the parameters could be choosen as $K_1 = K(3+K)e^k$, $K_2 = 1$ and $D= 1/4$.
\\
Theorem \ref{GUE-speed} follows considering $\g=\min\{1,\frac{\nu-1}{2}\}=0.$
\end{proof}

\begin{proof}[Proof of theorems \ref{determinants-speed} and \ref{determinants-llt}]
It is possible to follow the same scheme of proof as for the GUE case. Moreover, since the techniques and the methods are similar for all the eigenvalues statistics $X_n^{i,\b},\ i=L,J,G$, we prove the theorem only for the sequence $\left(X_n^{L,\b}\right)_{n\in\N}$ to avoid repetitions. \\
In order to reach this goal, we need to check whether the condition \ref{Z1} is satisfied, proceeding as we did for the complex Gaussian ensemble. The condition \ref{Z2} can then be forced.
\vskip 10pt
\noindent\underline{1) Bound for  $\left|\psi^{L,\b}(i\xi)-1\right|$}.
Using again the inequality (\ref{fundineq}) 
we obtain
\begin{equation}
\label{programL}
\left|\psi^{L,\b}(i\xi)-1\right|=\left|  e^{\Upsilon(i\xi)} - 1 \right| \leq \left|\Upsilon(i\xi) \right| e^{\left|\Upsilon(i\xi) \right|}
\end{equation}
and is therefore sufficient to find an upper bound for $\left| \Upsilon(i\xi) \right|.$ Recall that, 
\begin{align*}
\Upsilon(i\xi) & = -\frac{\xi^2}{\b}+\frac{i \xi}{2}+f(\xi)+\int_0^\infty\frac{\varphi(s)\left(e^{-si \xi}-1\right)}{e^{s\bh}-1}ds.
\end{align*}
where $f(\xi) := \bh \log G\left(\frac{i 2\xi}{\b}+1\right)-\left(i\xi-\half\right)\ell\left(\frac{i2\xi}{\b}+1\right).$ Thus, using (\ref{12}),
\begin{align*}
\left|\Upsilon(i\xi)\right|&\le \frac{|\xi|^2}{\b}+\frac{|\xi|}{2}+\left|f(\xi)\right|+\frac{|\xi|\pi^2}{18\b^2}.
\end{align*}
By Theorem 5.19 in \cite{R76}, and since $|f(0) = 0$,
\begin{align*}
\left| f(\xi)\right| \leq |\xi| \sup_{t \in (0,\xi)} \left|f'(t)\right|\,.
\end{align*}
Now
\[f'(t) = \frac{ G'\left(\frac{2i t}{\b}+1\right)}{G\left(\frac{2i t}{\b}+1\right)} - \ell \left( \frac{2it}{\b} +1\right) - \frac{2}{\b} \left( it - \half\right) \Psi\left(\frac{2it}{\b}+1\right)\,.\]
Using successively (\ref{ineqlog}) 
 and for $a_1, \dots, c_3$ some positive constants
\[\left|\ell(1 +it)\right| \leq a_1t^2 +b_1 |t| + c_1\,\]
(from (\ref{Binet1})), 
\[\left|\Psi(1+it)\right| \leq b_2 |t| + c_2\,,\]
(from (\ref{digamma}), (\ref{digamma_bound})), and
\[\left|\frac{G'(1+it)}{G(1+it)}\right| \leq a_3 t^2 + b_3 |t| + c_3\,,\]
(from (\ref{der-Barnes})), we get
\begin{align}
|f'(t)| \leq a_4 t^2 + b_4 |t| + c_4\end{align}
and then
\begin{align}
\label{upupsilon}
|\Upsilon (i\xi)| \leq a_5 |\xi|^3 + b_5 |\xi|^2 + c_5|\xi|\,.  
\end{align}
More precisely, the following values works well :
\begin{equation}
\label{a5}a_5 = \frac{4}{\beta^2} \ ,  \ b_5 = \frac{4}{\beta^2} + \frac{6}{\beta} \ , \ c_5 = 2 + \frac{3}{\beta} + \frac{2}{\beta^2} \end{equation}
This allows us to get the other bound
\begin{align}
|\Upsilon (i\xi)|  \leq \left(\frac{2}{\b^2}+\frac{3}{\b}+2\right)|\xi|e^{2|\xi|}
\end{align}
Gathering the above estimates 
\[|\Upsilon (i\xi)| e^{|\Upsilon (i\xi)|}\leq \left(\frac{2}{\b^2}+\frac{3}{\b}+2\right)|\xi|\exp (a_5 \xi|^3 + b_5 |\xi|^2 + (c_5 +2)|\xi|)\]
Now,  as in the Hermite case, we can find $k' > 0$ such that for every $\xi \in \mathbb R$
\[a_5 |\xi|^3 + b_5 |\xi|^2 +( c_5+2) |\xi| \leq 2 a_5 |\xi|^3 + k'\,,\]
so that
\begin{align}
\label{interm}|\Upsilon (i\xi)| e^{|\Upsilon (i\xi)|}\leq  C_\b |\xi|e^{\frac{8}{\b^2}|\xi|^3},\end{align}
where $C_\b$ is a constant depending on $\b$ which can be computed explicitly.
It remains to use 
 (\ref{programL}) again to get,
\begin{align*}
\left|\Psi^{L, \beta}(i\xi)-1\right|
&\le C_\b |\xi|e^{\frac{8}{\b^2}|\xi|^3}\,.
\end{align*}
\vskip 10pt
\noindent\underline{2) Bound for  $\left|\psi_n(i\xi)-1\right|$}.
From Theorem \ref{main_theorem}, 
\begin{equation*}
\psi^{L,\b}_n(z)=e^{\Upsilon(z)+ u_n(z)}
\end{equation*}
where
\begin{equation*}
u_n(z) := O\left(\frac{|z|+|z|^2+|z|^3}{n}\right)\,.
\end{equation*}
as soon as $z\in S_{\frac{\b}{2}}$ and $|z|\le\frac{\b}{8}n^{1/6}$.

It is then enough to apply the same trick as in the Hermite case, using (\ref{upupsilon}) and (\ref{interm}).
 We obtain
\[|\psi_n^{L, \beta} - 1| \leq \widetilde C_\beta |\xi| e^{\frac{8}{\beta^2}[\xi|^3}\]

for come constant  $\widetilde{C}_\b$  depending on $\b$ and $\tilde{K}$. Taking
$K_1^\b= \widetilde{C}_\b\}$, $K_2^\b=\frac{8}{\b^2}$ and $D^\b=\frac{1}{4K_2^\b}$, we have have proved that the sequence $\left(X_n^{L,\b}\right)$ converges mod-Gaussian convergence with zone of control $\left[-D^\b t_n^{\b},D^\b t_n^{\b}\right]$ and index of control $(1,3)$. \\
Theorem \ref{determinants-speed} then follows considering $\g=\min\{1,\frac{\nu-1}{2}\}=0$.\\
\\
\bigskip

Note that, in order to be consistent with statement of the theorems, we will not distinguish the constants $D^\b$ for each ensemble. This can be easily achieved by taking the minimum among these constants of $\b$-Laguerre, $\b$-Jacobi,   $\b$-Jacobi Circular and $\b$-Uniform Gram and $\b$-Jacobi Circular ensembles.
\end{proof}

\begin{proof}[Proof of theorems \ref{charactpoly-speed} and \ref{charactpoly-llt}]
In order to avoid repetitions, for the Circular Jacobi ensemble we don't repeat the whole proof, which uses the same technique as for log-determinants, we just remark on the main differences. Solving this case, boils down to finding an upper bound for the function $\Upsilon_\d(i\xi)$, defined in Theorem \ref{main_theorem2}, of the form $\left|\Upsilon_\d(i\xi)\right|\le a|\xi|^3+b|\xi|^2+c|\xi|$. To do so, one can proceed as we did previously for bounding $\Upsilon(i\xi)$, i.e. by applying  \cite[Theorem 5.19]{R76}. This leave us with the problem of estimating from above the following derivative:
\begin{align*}
f'(t)&= i\frac{ G'\left(\frac{i t+\d}{\b'}+1\right)}{G\left(\frac{i t+\d}{\b'}+1\right)} - i\ell \left( \frac{i t+\d}{\b'} +1\right) - \frac{i}{\b'} \left( i t +\d- \half\right) \Psi\left(\frac{i t+\d}{\b'}+1\right).
\end{align*}
The required bound follows by applying again Formulas (\ref{Binet1}), (\ref{digamma}) and (\ref{der-Barnes}) and using successively the inequality 
$$\left|\log(i t+\delta+\b')-\log(\delta+\b')\right|\le |t|\int_0^1\frac{1}{|i ts+\delta+\b'|}ds\le \frac{|t|}{\Re(\delta)+\b'}$$
where $\Re(\d)+\b'>0$, thanks to the assumption of Theorem \ref{main_theorem2}. The latter comes from the representation,
$$\log(a+\zeta)-\log(a)=\int_0^1\frac{\zeta}{s\zeta+a}ds,$$
which is valid for any $a,\zeta\in\C$ as soon as the segment $(a,a+\zeta)$ does not intersect $(-\infty,0)$, and which gives, as $|a+i\xi|\ge a$ for $a,\xi\in\R$,
$$\left|\log(a+i\xi)-\log(a)\right|\le \frac{|\xi|}{a}.$$
One can then apply the estimate for $\left|\Upsilon_\d(i\xi)\right|$ three times to the particular case of the $\b$-Circular Jacobi ensemble and proceed exactly as we did previously.
\end{proof}

\section{Refinements for \texorpdfstring{$\b$}{Lg} rational}\label{section:betarational}
In section \ref{section:results}, we have seen that the term $\Upsilon(z)$ defined in (\ref{upsilon}) appears as part of the limiting function for all the eigenvalues statistics of $\b$-ensembles considered and therefore plays an important role in precise moderate deviations. For this reason, one could ask if it is possible to obtain a simpler expression for $\Upsilon(z)$ not depending on the integral $\int_0^\infty\varphi(s) \frac{\ex^{-sz} - 1}{\ex^{s \b'} - 1} ds$, which does not have a closed formula for all $\b > 0.$ \\
The subsequent lemma shows that, when $\b$ is a positive half integer or rational, $\Upsilon(z)$  can be written as a finite sum of $\log$-Gamma and $\log$-Barnes G-functions. This is done via a direct method which uses the properties of such special functions and meets computations similar to those of Corollary 3.2 in \cite{ES97}. See also \cite{BG11} where similar expansions are performed for $z=1$.\\
Note that, the ausiliary study of the case $\b\in\Q$ implies lemma \ref{GUE-lemma}. Additionally, it allows to recover the limiting function of the COE, CUE and CSE characteristic polynomial previously computed by Kowalski and Nikeghbali in \cite{KN12}.
\begin{lemma} \label{rational}
For all $n\ge 1$ and any $z\in S_{\frac{\b}{2}}$ with $|z| <\frac{\b}{8} n^{1/6}$, 
\begin{enumerate}
\item\label{integer}
if $\frac{\b}{2}\in\N$, then theorem \ref{main_theorem} holds replacing $\Upsilon$ with 
\begin{align*}
\Upsilon^{\N} (z) &= \frac{z}{\b} \log{2\pi} + \frac{z^2}{\b} \log \frac{\b}{2}- \frac{2}{\b} \log G(1+z)  \\
&\quad +\sum_{m=1}^{\frac{\b}{2}-1} \frac{2m}{\b} \left(\ell
\left(\frac{2m}{\b} \right) - \ell
\left( \frac{2m+2z}{\b}\right)\right);\qquad\qquad\qquad\qquad
\end{align*}
\item\label{oneratio}
if $\frac{\b}{2}= \frac{1}{q},\  q\in \N$, then theorem \ref{main_theorem} holds replacing $\Upsilon$ with 
\begin{align*}
\Upsilon^{\frac{1}{\N}}(z) &= z \left( \left(\half-\frac{q}{2}\right)\log\frac{1}{q} + \frac{1}{2} \log{2\pi}\right) - \frac{1}{q} \log G\left(1+qz\right)\\
&\quad+ \sum_{m=1}^{q-1} \left(\frac{m}{q}-1\right) \left(\ell\left(\frac{m}{q}\right) - \ell \left( \frac{m}{q}+z\right)\right);\qquad\qquad\qquad
\end{align*}
\item\label{pration}
if $\frac{\b}{2}=\frac{p}{q},\ p, q \in \N$, then theorem \ref{main_theorem} holds replacing $\Upsilon$ with 
\begin{align*}
\Upsilon^{\Q}(z) &= z\left(\left(\half-\frac{q}{2}\right)\log\frac{1}{q}+\half\log 2\pi\right)\\
&\quad -\sum_{l=0}^{p-1} \frac{1}{q} \left( \log G\left(1+\frac{z+l}{p}q\right) - \log G\left(1+\frac{l}{p}q\right)\right)\\
&\quad + \sum_{l=0}^{p-1} \sum_{m=1}^{q-1} \left(\frac{m}{q}-1\right) \left(\ell\left( \frac{m}{q}+\frac{l}{p}\right) - \ell\left( \frac{m}{q}+\frac{z+l}{p}\right)\right).
\end{align*}
\end{enumerate}
\end{lemma}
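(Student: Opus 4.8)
The plan is to re-derive, for rational $\b'=\bh$, the asymptotic expansion of $\log\prod_{k=1}^n \G(\b'k+z)/\G(\b'k)$ proved in Theorem \ref{main_theorem}, but by a route that never calls on the Abel--Plana formula and hence never produces the integral $\int_0^\infty\varphi(s)(\ex^{-sz}-1)/(\ex^{s\b'}-1)\,ds$. The only tools needed are the Gauss multiplication formula $\G(mw)=(2\pi)^{(1-m)/2}m^{mw-\half}\prod_{l=0}^{m-1}\G(w+\frac lm)$, the Barnes recursion $G(w+1)=\G(w)G(w)$ and its telescoped form $\prod_{k=0}^{n-1}\G(a+k)=G(a+n)/G(a)$, the Barnes multiplication theorem (the $G$-analogue of Gauss's formula, cf. the computations in \cite{ES97}), and the asymptotics of $G$ recalled in Proposition 17 of \cite{KN12}. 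Each such rederivation yields an expansion of exactly the shape of Theorem \ref{main_theorem} whose $n$-independent term is a \emph{finite} combination of $\log\G$ and $\log G$ values; since the $n$-dependent parts coincide termwise with those of Theorem \ref{main_theorem}, comparing the two expansions forces the constant terms to agree, and this identifies $\Upsilon$ with the claimed $\Upsilon^\N,\ \Upsilon^{1/\N},\ \Upsilon^\Q$.

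For the integer case $\b'=p\in\N$ I would first apply Gauss multiplication with $w=k+z/p$ to write $\G(pk+z)=(2\pi)^{(1-p)/2}p^{pk+z-\half}\prod_{l=0}^{p-1}\G\!\left(k+\frac{z+l}{p}\right)$, so that after dividing by the same expression at $z=0$ and using the telescoping identity $\prod_{k=1}^n\G\!\left(k+\frac{z+l}{p}\right)=G\!\left(1+\tfrac{z+l}{p}+n\right)/G\!\left(1+\tfrac{z+l}{p}\right)$ one gets $\prod_{k=1}^n \G(pk+z)/\G(pk)=p^{nz}\prod_{l=0}^{p-1}\frac{G(1+\frac{z+l}{p}+n)\,G(1+\frac lp)}{G(1+\frac lp+n)\,G(1+\frac{z+l}{p})}$. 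I would then feed each ratio $G(1+\frac{z+l}{p}+n)/G(1+\frac lp+n)$ into the Barnes asymptotics of \cite{KN12} (with parameter $n+\frac lp$ and increment $\frac zp$) to peel off the $n$-dependent terms $\tfrac{z^2}{\b}\log n$, the $z(\cdots)\log n$ piece, and the $zn\log(\b'n)-zn$ contribution, and finally collapse the surviving $n$-free product $\prod_{l=0}^{p-1} G(1+\frac lp)/G(1+\frac{z+l}{p})$ by the Barnes multiplication theorem. That last step is what converts the $p$ separate $\log G$'s into the single $-\frac1p\log G(1+z)$ together with the $\log 2\pi$, $\log p$ and the $\sum_{m=1}^{p-1}\frac mp(\ell(m/p)-\ell(\frac{m+z}{p}))$ terms of $\Upsilon^\N$. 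Applying the same scheme with the half-integer shift of Theorem \ref{main_theorem2} (taking $\b'=1$) reproduces the GUE limiting function and hence Lemma \ref{GUE-lemma}.

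The two remaining cases are obtained by inserting a second multiplication, now in the denominator $q$ of $\b'$. For $\b'=1/q$ the arguments $k/q+z$ advance in steps of $1/q$, so I would group the factors into consecutive blocks of $q$ and recognise each block as a Gauss product $\prod_{r=1}^{q}\G(w+\frac rq)$ with $w=m-1+z$, turning the step-$1/q$ product into a step-$1$ product of $\G(q(m-1+z)+1)$ that can again be telescoped through $G$; the general $\b'=p/q$ case is then handled by first Gauss-splitting the numerator $p$ exactly as in the integer case and afterwards blocking the resulting step-$1/q$ products, which is why $\Upsilon^\Q$ carries both an outer $l$-sum of $\log G$'s and the double $\ell$-sum over $l$ and $m$. \emph{The main obstacle} is twofold: correctly invoking the Barnes multiplication theorem (the genuinely nontrivial special-function identity, where the ES97-type Hurwitz/Lerch-zeta bookkeeping lives) and verifying that the many elementary by-products of the two multiplication formulas and of the $G$-asymptotics reassemble \emph{exactly} into the universal leading coefficients $z\big((\half-\frac1\b)\log n+n\log(\b'n)-n\big)+\frac{z^2}\b\log n$ of Theorem \ref{main_theorem}. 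A minor but real point is the treatment of $n\not\equiv0\ (\mathrm{mod}\ q)$ in the last two cases: the finitely many factors of an incomplete final block are expanded by Stirling and contribute only to these already-known leading terms, leaving the constant $\Upsilon$ untouched.
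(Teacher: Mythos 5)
Your proof is correct, and for parts \ref{oneratio} and \ref{pration} it is essentially the paper's own argument: the authors also assume without loss of generality that $q\mid n$, use the Gauss multiplication formula (\ref{mult}) to block the step-$\frac{1}{q}$ product into $q^{-zn+qz}\,\frac{\G(n/q+z)}{\G(n/q)}\prod_{k=1}^{q-1}\frac{\G(k/q+z)}{\G(k/q)}\prod_{j=1}^{n/q-1}\frac{\G(qj+qz)}{\G(qj)}$ and reduce to part \ref{integer} (respectively, for $\b'=p/q$, Gauss-split the numerator $p$ first and apply part \ref{oneratio} twice), with the boundary factor handled by Binet's formula (\ref{Binet1}); your incomplete-block remark for $n\not\equiv 0 \MOD q)$ covers honestly what the paper dismisses as WLOG. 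The genuine divergence is in part \ref{integer}. Writing $\b'=\bh$, the paper never invokes any multiplication formula there: it divides by $\G(1+z)^n$, expands every factor via the Euler product (\ref{gamma}), and shows by a counting argument that the whole product equals $\prod_{j=1}^{\b'n-1}\left(1+\frac{z}{j}\right)^{n-\lfloor j/\b'\rfloor}$; splitting the exponent as $\left(n-\frac{j}{\b'}\right)+\left(\frac{j}{\b'}-\lfloor\frac{j}{\b'}\rfloor\right)$, the first piece reproduces the Weierstrass product (\ref{expG}) and yields the factor $G(1+z)^{-1/\b'}$ directly, while the fractional part telescopes through (\ref{gamma}) into the Gamma ratios giving the $m$-sum; the remainders are then estimated by hand ($S_3$ by Taylor expansion, $S_4$ by (\ref{Binet1})). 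Your route --- Gauss-split $\G(pk+z)$ into step-one factors, telescope $\prod_{k=1}^n\G(k+a)=G(1+a+n)/G(1+a)$, extract the $n$-dependence from Proposition 17 of \cite{KN12}, and collapse the surviving $n$-free product by Barnes's multiplication theorem --- is sound and more modular, reusing known $G$-asymptotics in place of the ad hoc $S_3$, $S_4$ estimates; the paper's route buys self-containedness and avoids the Barnes multiplication theorem entirely, because $G(1+z)^{-1/\b'}$ falls out of the canonical product for free.

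One step you should make explicit in your part \ref{integer}: Barnes's multiplication theorem is a \emph{double}-product identity, expressing $G(pw)$ through $\prod_{i,j=0}^{p-1}G\left(w+\frac{i+j}{p}\right)$, so it does not apply verbatim to the single product $\prod_{l=0}^{p-1}G\left(1+\frac{z+l}{p}\right)/G\left(1+\frac{l}{p}\right)$. You must first fold the arguments with $i+j\ge p$ back using the recursion $G(w+1)=\G(w)G(w)$, which shows the double product equals the $p$-th power of the single one times $\prod_{k=0}^{p-2}\G\left(w+\frac{k}{p}\right)^{p-1-k}$. It is exactly this folding that produces the weights $\frac{2m}{\b}$ in the $\ell$-sum of $\Upsilon^{\N}$, while the power-of-$p$ prefactor of the multiplication theorem supplies the constant $\frac{z^2}{\b}\log\frac{\b}{2}$ (which otherwise does not appear in your expansion, since the $n$-dependent factors only give $\frac{z^2}{\b}\log n$), and the Glaisher-type constant cancels because you only ever take ratios of values at $z$ and at $0$. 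This is finite special-function bookkeeping of the kind you flagged, not a gap.
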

\begin{proof} 
\textit{Part 1.}
Again we set $\b'=\bh$. Our proof is similar to the one of Corollary 3.2 in \cite{ES97}, where the case of $\b' = 1$ is examined. We start considering the following product
\begin{equation}\label{prod1}
\prod_{k=1}^n \frac{\G(\b' k+z) }{\G(\b^\prime k)\G(1+z)}.
\end{equation}
Using the Euler expansion (\ref{gamma})  of the Gamma function
we obtain  the following representation for (\ref{prod1}),
\begin{align*}
\prod_{k=1}^n \frac{\G(\b' k+z) }{\G(\beta^\prime k)\G(1+z)}
&=\prod_{k=1}^n\prod_{j=1}^\infty \frac{j}{k+\b'k-1+z}\frac{(j+\b'k-1)(j+z)}{j^2}\\
&=\prod_{j=1}^\infty\left(1+\frac{z}{j}\right)^n\prod_{k=1}^n \left(1+\frac{z}{j+\b'k-1}\right)^{-1}
=\prod_{j=1}^{\b'n-1}\left(1+\frac{z}{j}\right)^{n-\lfloor\frac{j}{\b'} \rfloor},
\end{align*}
where the last equality comes from a simple counting argument. Indeed, let $l$ be a fixed positive integer. If $l \ge \b' n$, i.e.  $l=\b'n+p$ for some $p\ge 0$, there are $n$ terms like $(1 + zl^{-1})^{-1}$ coming from the product over $k$. Indeed there are $n$ couples $(j,k)$, with $j\ge 1$ and $k=1,\ldots,n$ realizing the equality $j+\b'k-1=\b' n+p = l$. \\
If $l< \b'n$, we have $\lfloor\frac{l}{\b'} \rfloor$ those terms coming from the product over $k$: since $\b'\lfloor\frac{l}{\b'} \rfloor\le l< \b'\left(\lfloor\frac{l}{\b'} \rfloor+1\right)$, it's easy to check that there are $\lfloor\frac{l}{\b'} \rfloor$ couples $(j,k)$, with $j\ge 1 $ and $k=1,\ldots,n$ realizing the equality $j+\b'k-1=l$.\\ 
Thus,
\begin{align*}
\prod_{k=1}^n \frac{\G\left( \b'k +z \right)}{\G \left( \b'k\right)}&=S_1(n,z)S_2(n,z),
\end{align*}
where
\begin{equation*}
S_1(n,z):=\G^n\left( 1 +z \right) \prod_{j=1}^{\b' n} \left( 1 + \frac{z}{j}\right)^{n -  \frac{j}{\b'}  }
\end{equation*}
and
\begin{equation*}
S_2(n,z):=\prod_{j=1}^{\b' n} \left( 1 + \frac{z}{j}\right)^{  \frac{j}{\b'} - \lfloor \frac{j}{\b'} \rfloor }.
\end{equation*}
Using the expansion (\ref{expG}) of  G  
together with (\ref{gamma}), we obtain
\begin{equation}
\label{s1} 
S_1(n,z) = \frac{\left( 2 \pi \right)^{z/{2\b'}} e^{-\frac{z}{\b'}}}{\left(G(1+z)\right)^{\frac{1}{\b'}}}  \left( \b'n+1\right)^{\frac{z^2}{2\b'}+zn} e^{-zn} S_3(n,z) 
\end{equation}
where
\begin{equation*}
S_3(n,z):= e^{-\frac{z(z-1)}{2\b'}}\prod_{j=\b' n+1}^\infty  e^{-\frac{z}{\b'}}  \left( 1+\frac{z}{j}\right)^{\frac{j}{\b'} - n} \left(1 + \frac{1}{j}\right)^{\frac{z^2}{2\b'}+zn}.
\end{equation*}
We start showing that
\begin{equation}\label{s3}
\log S_3(n,z)=O\left(\frac{|z|+|z|^2+|z|^3}{n}\right)
\end{equation}
with an absolute implied constant. If we Taylor-expand the logarithm at the origin we have that
\begin{equation*}
\log (1+\o)=\o-\frac{\o^2}{2}+O(\o^3)
\end{equation*}
for $|\o|\le a<1$, with an absolute implied constant. 
Since the sum in $\log S_3(n,z)$ runs from $\b'n+1$ and  $|z|<\frac{\b'}{4}n^{1/6}$, both terms $\left|\frac{z}{j}\right|$ and $\left|\frac{1}{j}\right|$ are bounded by a constant $a<1$. Therefore we get
\begin{align*}
\log S_3(n,z) &=  -\frac{z(z-1)}{2\b'} + \sum_{j>\b' n}^\infty \left[  -\frac{z}{\b'} + \frac{j}{\b'} \log\left( 1+\frac{z}{j}\right) +  \frac{z^2}{2\b'} \log \left(1 + \frac{1}{j}\right) \right] \\
& \quad + \sum_{j>\b' n}^\infty \left[ zn \log \left( 1+\frac{1}{j}\right) - n \log\left(1 + \frac{z}{j}\right) \right] \\
&= -\frac{z(z-1)}{2\b'} -\frac{z^2}{4\b'}\sum_{j>\b' n}^\infty\frac{1}{j^2}+ \frac{z(z - 1)}{2}\sum_{j>\b' n}^\infty \left( \frac{n }{j^2}\right)\\
& \quad + O\left(\sum_{j>\b' n}^\infty\left(\frac{nz}{k^3}+\frac{z^2}{k^3}+\frac{z^3}{k^2}\right)\right)
\end{align*}
with an absolute implied constant. Using that 
\begin{equation*}
\sum_{k>n}\frac{1}{j^2}=\frac{1}{n}+O\left(\frac{1}{n^2}\right),\qquad \sum_{k>n}\frac{1}{j^3}=\frac{1}{2n^2}+O\left(\frac{1}{n^3}\right)
\end{equation*}
for all $n\ge 1$, (\ref{s3}) follows.
Parameterizing in terms of the reminders of $j/\b'$, we derive the following representation for $S_2(n,z)$,
\begin{align}\label{s2b}
\nonumber S_2(n,z)&= \prod_{j=1}^{\b' n} \left( 1 + \frac{z}{j}\right)^{  \frac{j}{\b'} - \lfloor \frac{j}{\b'} \rfloor } = \prod_{m=1}^{\b'-1} \prod_{p=0}^{n-1} \left( 1 + \frac{z}{p\b'+m}\right)^{\frac{m}{\b'}} =\\
\nonumber &= \prod_{m=1}^{\b'-1} \left(\frac{m+z}{m} \frac{\G\left(\frac{m+z}{\b'}+n \right)}{\G\left(\frac{m}{\b'}+n \right)}\frac{\G\left(\frac{m}{\b'}+1 \right)}{\G\left(\frac{m+z}{\b'}+1 \right)} \right)^{\frac{m}{\b'}}\\
&= \prod_{m=1}^{\b'-1} \left(\frac{\G\left(\frac{m+z}{\b'}+n \right)}{\G\left(\frac{m}{\b'}+n \right)}\frac{\G\left(\frac{m}{\b'}\right)}{\G\left(\frac{m+z}{\b'}\right)} \right)^{\frac{m}{\b'}},
\end{align}
where the second inequality can be shown using (\ref{gamma}).\\
Gathering the estimates (\ref{s1}), (\ref{s3}) and (\ref{s2b}), we arrive at 
\begin{align*}
\sum_{k=1}^n \ell\left( \b'k+z\right) - \ell\left( \b'k\right)&= \frac{z}{2\b'} \log{2\pi}-\frac{z}{\b'} - \frac{1}{\b'} \log G(1+z)  \\
&\quad +\left( \frac{z^2}{2\b'}+zn\right) \log\left( \b'n+1\right)-zn+\log(S_n(z)) \\
&\quad +\sum_{m=1}^{\b'-1} \frac{m}{\b'} \left(\ell\left(\frac{m}{\b'} \right) - \ell\left( \frac{m+z}{\b'}\right)\right) \\
&\quad +S_4(n,z)+O\left( \frac{|z|+|z|^2+|z|^3}{n}\right),
\end{align*}
where 
\begin{equation*}
S_4(n,z):=\sum_{m=1}^{\b'-1} \frac{m}{\b'} \left( \ell\left( \frac{m+z}{\b'}+n\right) - \ell\left( \frac{m}{\b'}+n\right)\right).
\end{equation*}
Note that, again using the Binet's formula (\ref{Binet1}),
\begin{align*}
S_4(n,z)&=\sum_{m=1}^{\b'-1} \frac{m}{\b'}  \left( \frac{m+z}{\b'} +n - \half \right) \log\left( 1+\frac{z}{m+n\b'}\right) \\
&\quad +\sum_{m=1}^{\b'-1} \frac{mz}{\left(\b'\right)^2} \log\left( \frac{m}{\b'}+n\right) - \sum_{m=1}^{\b'-1} \frac{mz}{\left(\b'\right)^2} \\
&\quad + \sum_{m=1}^{\b'-1}\frac{m}{\b'}\int_0^\infty f(s)e^{-sn}\left(e^{-\frac{s}{\b'}}\right)^m\left(e^{-\frac{sz}{\b'}}-1\right)ds\\
&= \sum_{m=1}^{\b'-1} \frac{m}{\b'}  n \log\left( 1+\frac{z}{m+n\b'}\right) + \left(\log n-1\right)\sum_{m=1}^{\b'-1} \frac{mz}{\left(\b'\right)^2} +O\left(\frac{|z|}{n}\right)\\
&= \frac{\b'-1}{2\b'} z \log n+O\left(\frac{|z|+|z|^2}{n}\right),
\end{align*}
so that 
\begin{equation}\label{s4}
S_4(n,z)=\frac{\b'-1}{2\b'} z \log n+O\left(\frac{|z|+|z|^2}{n}\right)
\end{equation}
with an absolute implied constant. The result then follows joining the estimates derived above.
\vskip 20pt
\textit{Part 2.} Without loss of generality we assume $q|n$.
Recall the multiplication theorem for the Gamma function, 
\begin{align}
\prod_{k=0}^{m-1} \G\left(z+\frac{k}{m} \right) = \left(2 \pi \right) ^{\frac{m-1}{2}} m^{\half - mz} \G\left( m z \right). \label{mult}
\end{align}
Note that the product we are interested in, may be rewritten as follows,
\begin{align*}
\prod_{k=1}^n \frac{\Gamma\left(\frac{k}{q}+z\right) }{\Gamma\left(\frac{k}{q}\right)}&=\frac{\Gamma\left(\frac{n}{q}+z\right) }{\Gamma\left(\frac{n}{q}\right)}
\prod_{k=1}^{q-1} \frac{\Gamma\left(\frac{k}{q}+z\right) }{\Gamma\left(\frac{k}{q}\right)}
\ \prod_{k=q}^{n-1} \frac{\Gamma\left(\frac{k}{q}+z\right) }{\Gamma\left(\frac{k}{q}\right)}\\
&=\frac{\Gamma\left(\frac{n}{q}+z\right) }{\Gamma\left(\frac{n}{q}\right)}\ \prod_{k=1}^{q-1} \frac{\Gamma\left(\frac{k}{q}+z\right) }{\Gamma\left(\frac{k}{q}\right)}
\ \prod_{j=1}^{\frac{n}{q}-1}\prod_{r=0}^{q-1}\frac{\G\left(j+\frac{r}{q}+z\right)}{\G\left(j+\frac{r}{q}\right)}\\
&=q^{-zn+qz}\frac{\Gamma\left(\frac{n}{q}+z\right) }{\Gamma\left(\frac{n}{q}\right)}\ \prod_{k=1}^{q-1} \frac{\Gamma\left(\frac{k}{q}+z\right) }{\Gamma\left(\frac{k}{q}\right)}\prod_{j=1}^{\frac{n}{q}-1}\frac{\G(qj+qz)}{\G(qj)}
\end{align*}
where the last equality is due to (\ref{mult}).
The theorem then follows by a simple application of Lemma \ref{rational}.\ref{integer} and of the first Binet formula (\ref{Binet1}).
\vskip 20pt
\textit{Part 3.} Again we assume,  without loss of generality, that $q|n$.
Applying the multiplication formula (\ref{mult}),
\begin{align*}
\prod_{k=1}^n \frac{\G\left(\frac{p}{q} k+z\right) }{\G \left(\frac{p}{q} k\right)}& = \prod_{k=1}^n \frac{\G\left(p \left(\frac{k}{q} +\frac{z}{p}\right)\right) }{\G \left(\frac{p}{q} k\right)} = p ^{zn}\prod_{k=1}^n \prod_{l=0}^{p-1} \frac{\G\left(\frac{k}{q} +\frac{z}{p} + \frac{l}{p}\right) }{\G \left(\frac{k}{q} +\frac{l}{p}\right)}\\
&=  p^{nz}\prod_{l=0}^{p-1}\left(\prod_{k=1}^n\frac{\G\left(\frac{k}{q}+\frac{z+l}{p}\right)}{\G\left(\frac{k}{q}\right)}\right)\left(\prod_{k=1}^n\frac{\G\left(\frac{k}{q}+\frac{l}{p}\right)}{\G\left(\frac{k}{q}\right)}\right)^{-1}.
\end{align*}
Then the result follows immediately by applying twice Lemma \ref{rational}.\ref{oneratio}. 
\end{proof}

\section{Appendix}
\label{appendix}
\subsection{Some properties of the Gamma and Barnes G functions}
\textit{The Gamma function.} For any $z\in\C$ with $\Re z>0$, the complex Gamma function is defined as the absolute convergent integral
\begin{equation*} 
\G(z) = \int_0^\infty \ex^{-t} t^{z-1} dt.
\end{equation*} 
Its extension as a meromorphic function on $\mathbb C \setminus \{-1, -2, \dots\}$ has the representation
\begin{equation}\label{gamma}
\G\left( 1+z\right) = \prod_{j=1}^\infty \Big\{ \left( 1+\frac{z}{j}\right)^{-1} \left( 1+\frac{1}{j}\right)^z \Big\}.
\end{equation} 
The first Binet's formula for the logarithm of the Gamma function $\ell (z):=\log\G(z)$ is given by  (see for instance \cite{EMOT} p.21 or \cite{WW96} p.242)
\begin{equation}
\label{Binet1}
\ell (z) = \left( z - \half\right) \log z - z + 1 + \int_0^\infty \varphi(s) \left[ \ex^{-sz} - \ex^{-s}\right]ds,  \quad \Re (z) >0.
\end{equation}
where the function $\varphi$ is defined as
\begin{equation*}
\varphi(s) = \left[\half - \frac{1}{s} + \frac{1}{\ex^s-1} \right] \frac{1}{s}
\end{equation*}
and satisfies, for every $s\ge 0$
\begin{equation}\label{12}
0<\varphi(s)\le \varphi(0)=\frac{1}{12}.
\end{equation}
Moreover, if we set
\begin{equation*}
\phi(s) = \frac{1}{12}-\varphi(s) \ , \quad s >0
\end{equation*}
we have that  $0 \leq  \phi(s) < 1/12$ for $s > 0$ and
\begin{equation*}
\lim_{x \rightarrow 0} \phi(x)/x^2 = 1/720,
\end{equation*}
which leads to another version of (\ref{Binet1}), that is
\begin{equation}
\label{Binet1bis}
\ell (z) = \left(z - \half\right)\log z -z + \half \log 2\pi + \frac{1}{12z} - \int_0^\infty \phi(s) \ex^{-sz} ds\,.
\end{equation}
The second Binet's formula (see for instance \cite{EMOT} p.22 or \cite{WW96} p.245) is
\begin{equation}
\label{Binet2}
\ell (z) = \left(z-\half \right) \log z - z + \frac{1}{2} \log (2 \pi) + 2 \int_0^\infty \frac{ \arctan\left(\frac{s}{z} \right)}{\ex^{2 \pi s} - 1} ds, \quad \Re (z) >0,
\end{equation}
where, for complex $\zeta$, $\arctan\zeta$ is defined as
\[\arctan \zeta := \int_0^\zeta \frac{dt}{1+t^2}\]
with integration along a straight line.
Both have as a byproduct the classical Stirling-like formula (see Olver \cite{FR97} p.293 or \cite{WW96} p.243)
\begin{equation}
\label{Stirling}
\ell (1+z)  = \left( z + \half \right) \log z - z + \half \log 2 \pi + \frac{1}{12 z} +  r(z), 
\end{equation}
where, for every $\delta > 0$, 
\begin{equation*}
\sup_{|\arg z| \leq \pi-\delta}|z|^3 |r(z)| = R_\delta < \infty.
\end{equation*}
The derivative of the logarithm of the Gamma function $\Psi(z)$, called the Digamma function, can be represented, differentiating (\ref{Binet1}), as 
\begin{equation}\label{digamma}
\Psi(z) = \log(z) - \int_0^\infty \ex^{-sz} \left( s \varphi(s) + \half\right)ds.
\end{equation}
Moreover, 
\begin{align}
0 < s\varphi(s) + \half < 1. \label{digamma_bound}
\end{align}
\textit{The Barnes G function.}  It is defined as the entire solution of the functional equation
\begin{equation}
\label{finiteG}
G(z+1) = G(z) \G (z),
\end{equation}
which may be represented as the product
\begin{equation}
\label{expG}
G \left( 1+z \right) = \left( 2 \pi \right)^{z/2} e^{-(z+1)z/2} \prod_{j=1}^\infty \Big\{ \left( 1+\frac{z}{j}\right)^j \left(1 + \frac{1}{j}\right)^{z^2/2} \ex^{-z}\Big\}\,.
\end{equation}
Its derivative satisfies (see \cite{WW96} p.258)
\begin{equation}\label{der-Barnes}
\frac{G'(z)}{G(z)} = (z-1) \Psi(z) - z + \half \log 2\pi + \half
\end{equation}
and its logarithm is related to the $\ell$ function as follows (see \cite{Barnes})
\begin{equation}
\label{Barnes}
\log G(z+1) = \frac{z(1-z)}{2} + \frac{z}{2} \log (2 \pi) + z \ell(z) - \int_0^z \ell(x) dx 
, \quad \Re (z) >0.
\end{equation}
Gathering the expansions (\ref{Barnes}) and (\ref{Binet2}), we obtain the following integral representation for all $z\in\C$ with $\Re (z)>0$,
\begin{align*}
\log G(z+1)  
&= \frac{z}{2} - \frac{3z^2}{2} + z \log (2 \pi) + z \left(z-\half \right) \log z  -  \frac{z}{2} \left( 1-\frac{z}{2} + (z-1) \log z \right) \\
&\quad + \frac{z^2}{2} - \frac{z}{2} \log (2 \pi) - 2 \int_0^\infty \frac{  \frac{1}{2} \log \left(z^2+s^2 \right) - s \log s}{\ex^{2 \pi s} - 1} ds ,
\end{align*}
which gives 
\begin{equation}
\label{logG}
\log G(z+1) = \frac{z^2}{2} \log z -\frac{3}{4}z^2 + \frac{z}{2} \log 2 \pi   - \int_0^\infty   \log \left(1+ z^2s^{-2} \right) \frac{s\!\ ds }{\ex^{2 \pi s} - 1} .
\end{equation}
Moreover, there is an asymptotic formula (Formula 4.184 in \cite{F10})
\begin{equation}
\label{4.184}
\log G(z+1) =  \frac{z^2}{2}\log z - \frac{3}{4}z^2
 + \frac{z}{2}\log 2\pi - \frac{1}{12}\log z + \zeta'(-1) + o(1) \end{equation}
 where $\zeta$ is the Riemann function. 

\subsection{The Abel-Plana summation formula }\begin{thm}[Abel-Plana Summation Formula, \cite{FR97}, p. 290] \ 
\label{Abel-Plana}\\
Let $f$ be a holomorphic function on the
strip $\lbrace z \in \C, 0 \leq \Re(z) \leq n \rbrace$ (i.e. $f$ is continuous on this strip and holomorphic in
its interior). Suppose that $f(z) = o \left(e^{2 \pi |\Im(z)|}\right)$ as $\Im(z) \to \pm \infty$, uniformly
with respect to $\Re(z) \in [0, n].$ Then,
\begin{align}
\nonumber
 \sum_{k=0}^{n-1} f(k) &= \int_{0}^{n} f(s) ds  + \half f(0) - \half f(n) +i \int_{0}^\infty \frac{f(is) - f(-is)}{\ex^{2 \pi s} - 1} ds \\
\label{AP}
&- i\int_0^\infty \frac{f(n + is) - f(n - is)}{ \ex^{2 \pi s} -1} ds.
\end{align}
\end{thm}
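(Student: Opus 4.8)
The plan is to establish this classical identity by contour integration, playing off two meromorphic kernels that decay in opposite half-planes. Set
\[
k_+(z)=\frac{1}{\ex^{2\pi i z}-1},\qquad k_-(z)=\frac{1}{\ex^{-2\pi i z}-1},
\]
each of which has only simple poles, located exactly at the integers, with $\res_{z=k}k_\pm=\pm\frac{1}{2\pi i}$. The elementary identity $k_-(z)=-1-k_+(z)$ — equivalently $-k_-(z)=1+k_+(z)$ — is the engine of the proof: it shows that $k_+$ and $-k_-$ share the residue $\frac{1}{2\pi i}$ at every integer while differing by the entire function $1$. Moreover $k_+(z)\to 0$ as $\Im z\to-\infty$ and $-k_-(z)\to 0$ as $\Im z\to+\infty$, like $\ex^{-2\pi|\Im z|}$; together with the hypothesis $f(z)=o(\ex^{2\pi|\Im z|})$, uniform in $\Re z\in[0,n]$, this is exactly what forces the horizontal parts of the contours to vanish.

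First I would fix $R>0$ and integrate $f\,k_+$ over the positively oriented boundary of the lower half-rectangle $D^-=[0,n]\times[-R,0]$ and $f\,(-k_-)$ over the boundary of the upper half-rectangle $D^+=[0,n]\times[0,R]$, indenting each contour by small arcs around the integer poles $0,1,\dots,n$ lying on the shared real edge. Since the open rectangles contain no poles, both indented closed integrals vanish. When the two are added, the indentation arcs around each interior integer $k\in\{1,\dots,n-1\}$ combine (a lower semicircle from $D^-$ and an upper one from $D^+$) into a full circle contributing the residue $f(k)$, while the corner arcs at $0$ and $n$ combine into half-circles contributing $\half f(0)$ and $\half f(n)$; after transposition these account for $\sum_{k=1}^{n-1}f(k)+\half f(0)+\half f(n)$.

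Next I would read off the straight edges as $R\to\infty$. On the shared real edge the two contours run in opposite senses, and because $-k_-=1+k_+$ the $k_+$-contributions cancel while the part $1$ survives, leaving exactly $\int_0^n f(s)\,ds$. On the vertical edges I would set $z=is$ and $z=n+is$ and use $k_+(is)=(\ex^{-2\pi s}-1)^{-1}$, $-k_-(is)=-(\ex^{2\pi s}-1)^{-1}$; folding the upper and lower halves together (the substitution $s\mapsto-s$ on the lower half) collapses the left edge to $i\int_0^\infty\frac{f(is)-f(-is)}{\ex^{2\pi s}-1}\,ds$ and the right edge to $-i\int_0^\infty\frac{f(n+is)-f(n-is)}{\ex^{2\pi s}-1}\,ds$. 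On the horizontal edges $\Im z=\pm R$ the integrand is bounded by $o(\ex^{2\pi R})/(\ex^{2\pi R}-1)=o(1)$ uniformly, so over the fixed length $n$ these integrals tend to $0$. Assembling the vanishing total yields $\sum_{k=1}^{n-1}f(k)+\half f(0)+\half f(n)=\int_0^n f(s)\,ds+\big(\text{the two correction integrals}\big)$, and adding $f(0)$ to both sides turns the left-hand side into $\sum_{k=0}^{n-1}f(k)$ and the boundary terms into $+\half f(0)-\half f(n)$, which is precisely (\ref{AP}).

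I expect the main obstacle to be the boundary bookkeeping: checking rigorously that the indentation arcs yield full residues at the interior integers and exactly half-residues at the corner poles $0$ and $n$ — the mechanism behind the $\half f(0),\ \half f(n)$ terms — while keeping every orientation and sign coherent as $D^-$ and $D^+$ are combined and the shared edge is traversed in opposite senses. The pivotal algebraic step is the cancellation of the kernel parts via $-k_-=1+k_+$, which is what manufactures the $\int_0^n f$ term. A lighter technical point is the vanishing of the horizontal edges under the merely $o(\ex^{2\pi|\Im z|})$ growth: here it is essential that the kernel's $\ex^{-2\pi|\Im z|}$ decay renders the integrand $o(1)$ uniformly in $\Re z$, so that integrating over the fixed-length horizontal sides still gives a null contribution in the limit.
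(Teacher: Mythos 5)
Your proof is correct, and there is nothing in the paper to compare it against: the paper states Theorem \ref{Abel-Plana} without proof, quoting it from Olver's book \cite{FR97}, and your two-kernel contour argument — playing $k_+=1/(\ex^{2\pi i z}-1)$ against $-k_-=1+k_+$ on the lower and upper half-rectangles, with the indentation arcs yielding full residues $f(k)$ at interior integers and the quarter-circles at the corners producing the $\half f(0)$ and $\half f(n)$ terms — is precisely the classical derivation found in that cited source. All the delicate points check out: the residues $\pm\frac{1}{2\pi i}$, the orientations of the shared real edge, the cancellation via $-k_-=1+k_+$ that manufactures $\int_0^n f(s)\,ds$, the evaluation of the vertical edges giving the two correction integrals with the correct signs, and the $o(1)$ estimate killing the horizontal edges under the $o\left(\ex^{2\pi|\Im(z)|}\right)$ growth hypothesis.
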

\bibliographystyle{plain}
\bibliography{Biblio}
\end{document}